\newcommand\blfootnote[1]{
  \begingroup
  \renewcommand\thefootnote{}\footnote{#1}
  \addtocounter{footnote}{-1}
  \endgroup
}
\newcommand{\call}[1]{\mathcal{#1}}
\newcommand{\double}[1]{\mathbb{#1}}
\newcommand{\C}{\double{C}}
\newcommand{\N}{\double{N}}
\newcommand{\Q}{\double{Q}}
\newcommand{\hol}{\call{O}}
\newcommand{\D}{\mathscr{D}}
\newcommand{\mm}{\mathfrak{m}}
\newcommand{\wtilde}[1]{\widetilde{#1}}
\newcommand{\what}[1]{\widehat{#1}}
\DeclareMathOperator{\Rees}{\mathcal{R}}
\DeclareMathOperator{\gr}{gr}
\DeclareMathOperator{\rank}{rank}
\DeclareMathOperator{\Sing}{Sing}
\DeclareMathOperator{\Der}{Der}
\DeclareMathOperator{\diag}{diag}
\DeclareMathOperator{\Fitt}{Fitt}
\DeclareMathOperator{\htt}{ht}
\DeclareMathOperator{\tr}{tr}
\theoremstyle{plain}
\newtheorem{Th}{Theorem}[section]
\newtheorem{Prop}[Th]{Proposition}
\newtheorem{Cor}[Th]{Corollary}
\newtheorem{Lem}[Th]{Lemma}
\newtheorem{Ex}[Th]{Example}
\newtheorem{Conj}[Th]{Conjecture}
\theoremstyle{definition}
\newtheorem{Def}[Th]{Definition}
\newtheorem{Rmk}[Th]{Remark}
\begin{document}

\title{\textbf{On strong Euler-homogeneity and Saito-holonomicity for complex hypersurfaces. Applications to a conjecture on free divisors}}
\author{Abraham del Valle Rodríguez\footnote{\noindent ORCID number 0009-0008-5613-6333. Departamento de Álgebra and IMUS, Universidad de Sevilla, C/Tarfia s/n, 41012 Sevilla (Spain). \\
Email: adelvalle2@us.es.}}
\date{}

\maketitle

\begin{abstract}

We first develop some criteria for a general divisor to be strongly Euler-homogeneous in terms of the Fitting ideals of certain modules. We also study new variants of Saito-holonomicity, generalizing Koszul-free type properties and characterizing them in terms of the same Fitting ideals.

Thanks to these advances, we are able to make progress in the understanding of a conjecture from 2002: a free divisor satisfying the Logarithmic Comparison Theorem (LCT) must be strongly Euler-homogeneous. Previously, it was known to be true only for ambient dimension $n \leq 3$ or assuming Koszul-freeness. We prove it in the following new cases: assuming strong Euler-homogeneity outside a discrete set of points; assuming the divisor is weakly Koszul-free; for $n=4$; for linear free divisors in $n=5$.

Finally, we refute a conjecture stating that all linear free divisors satisfy LCT, are strongly Euler-homogeneous and have $b$-functions with symmetric roots about $-1$.

\blfootnote{\noindent 2020 \emph{Mathematics Subject Classification.}
32S25, 32S05, 14F40. \\
\textbf{Keywords:} free divisor, Logarithmic Comparison Theorem, logarithmic vector field, Euler-homogeneity, Saito-holonomicity. \\
\textbf{Declarations:} The author is supported by a Junta de Andalucía PIF fellowship num. PREDOC\_00485, by PID2020-114613GB-I00 and by PID2024-156912NB-I00. He has no further relevant interests to disclose.}
\end{abstract}

\section{Introduction}
\label{SectionIntro}

Let $X$ be a complex analytic manifold of dimension $n$ and let $D$ be a divisor (i.e. a hypersurface; we will only consider reduced divisors and reduced local equations of them). Let $U = X \setminus D$ be the complement of the divisor $D$ in $X$ and let $j: U \to X$ be the inclusion. Let $\Omega^\bullet_X(*D)$ be the \emph{meromorphic de Rham complex}, the complex of sheaves (of $\C$-vector spaces) of meromorphic differential forms with poles along $D$, together with the exterior differential.

Grothendieck's Comparison Theorem \cite[Theorem 2]{Groth} states that the natural morphism

$$ \Omega^\bullet_X(*D) \to Rj_* \C_U $$

\noindent is an isomorphism in the derived category of sheaves of $\C$-vector spaces. 

It is a general fact that the hypercohomology of $Rj_* \C_U$ is isomorphic to $H^*(U; \C)$, the singular cohomology of $U$. Thus, Grothendieck's Comparison Theorem tells us that the meromorphic de Rham complex can be used to compute the cohomology of the complement of a divisor. However, the sheaves of $\Omega^\bullet_X(*D)$ are not coherent $\hol_X$-modules, so it is interesting to see when one can replace the meromorphic de Rham complex by a quasi-isomorphic complex whose objects are coherent.

K. Saito introduced in \cite{Saito80} \emph{logarithmic forms (along $D$)} as those meromorphic forms $\omega$ such that $f \omega$ and $f d\omega$ are holomorphic, where $f$ is a local equation of $D$. Logarithmic forms constitute a complex of sheaves of $\C$-vector spaces with the exterior differential (in fact, a subcomplex of $\Omega_X^\bullet(*D)$), which is called the \emph{logarithmic de Rham complex} and it is denoted by $\Omega_X^\bullet(\log D)$. As the $\Omega^i_X(\log D)$ are coherent $\hol_X$-modules, it is natural to ask when the inclusion $\Omega^\bullet_X(\log D) \hookrightarrow \Omega^\bullet_X(*D)$ is a quasi-isomorphism, so that $\Omega^\bullet_X(\log D)$ could serve our purpose. This question leads us, by analogy with Grothendieck's Comparison Theorem, to the following definition:

\begin{Def}
    We say that $D$ satisfies the \emph{Logarithmic Comparison Theorem (LCT)} if the inclusion

    \begin{equation}
    \label{EqLCTMapQuasiIsomIncl}
    \Omega^\bullet_X(\log D) \hookrightarrow \Omega^\bullet_X(*D)
    \end{equation}

    \noindent is a quasi-isomorphism.
\end{Def}

If LCT holds for $D$, in particular, the inclusion $\Omega^\bullet_{X,p}(\log D) \hookrightarrow \Omega^\bullet_{X,p}(*D)$ is a quasi-isomorphism for all $p \in D$. However, it is not clear that $\Omega^\bullet_{X,p}(\log D) \hookrightarrow \Omega^\bullet_{X,p}(*D)$ being a quasi-isomorphism implies that LCT holds in a neighbourhood of $p$. Although we do not know of any counterexample, we tend to think that this is not true and Example \ref{ExQuasiAt0ButNotLCT} supports this idea. Therefore, we will say that \emph{$D$ satisfies LCT at $p$} or that \emph{the germ $(D,p)$ verifies LCT} if $D$ satisfies LCT in a neighbourhood of $p$.

P. Deligne proved that LCT holds for normal crossing divisors \cite[Ch. II, Lemme 6.9]{Deligne70}, a fact that plays a crucial role in the development of mixed Hodge theory. Later, F.J. Castro Jiménez, L. Narváez Macarro and D. Mond showed in \cite{CNM96} that LCT also holds for any locally quasihomogeneous free divisor. H. Terao had conjectured in 1977 that any hyperplane arrangement (which are obviously locally quasihomogeneous), free or not, satisfies LCT. This was finally proved in 2024 by D. Bath \cite{BathHypArr}. For a recent detailed survey on LCT, see \cite{SurveyLCT}.

In 2002, F.J. Calderón Moreno, D. Mond, L. Narváez Macarro and F.J. Castro Jiménez proved that LCT is equivalent to local quasihomogeneity for plane curves \cite[Theorem 1.3]{2002}. They also showed that this is not true in higher dimensions: the divisor given by the equation $x_1x_2(x_1+x_2)(x_1+x_2x_3) = 0$ in $\C^3$, usually called \textit{the four lines}, verifies LCT but is not locally quasihomogeneous \cite[Section 4]{2002}. Nevertheless, this divisor still has the property of \emph{strong Euler-homogeneity} (see Definition \ref{DefFEH}). This is a weaker property that is equivalent to local quasihomogeneity for isolated singularities. A more detailed explanation is given below.

In fact, all known examples of divisors satisfying LCT are strongly Euler-homogeneous. The same authors conjectured that this is always the case for a particular type of divisors: \emph{free divisors}. 

\begin{Conj}
\label{ConjLCT-SEH}
    If $D$ is a free divisor in a complex analytic manifold $X$ of dimension $n$ that satisfies the Logarithmic Comparison Theorem, then it is strongly Euler-homogeneous.
\end{Conj}

Very little is known about the veracity of this conjecture. Apart from plane curves, for which both properties are equivalent, as indicated above, Conjecture \ref{ConjLCT-SEH} is also known to be true for $n=3$. M. Granger and M. Schulze proved it in \cite[Theorem 1.6]{GS} with the help of the formal structure theorem for logarithmic vector fields. Later, in 2015, L. Narváez Macarro showed that, for Koszul-free divisors, both properties (and some others) are also equivalent \cite[Theorem 4.7]{NarvDual} (see also \cite[Corollary 1.8]{Tor04}). In the non-free setting, recent work by D. Bath and M. Saito confirmed that LCT also implies local quasihomogeneity for isolated singularities \cite[Corollary 1]{BathSaitoTLCT}.

One might also ask for the converse. D. Bath and M. Saito also characterized when a quasihomogeneous isolated singularity verifies LCT in terms of the unipotent monodromy part of the vanishing cohomology \cite[Remark 1.4a]{BathSaitoTLCT}. This provides several counterexamples to the reverse implication, at least for non-free divisors. However, as far as we are concerned, no free counterexample is known.

For free divisors, LCT is equivalent to the canonical map

\begin{equation}
\label{EqLCTMapQuasiIsomJ}
    j_! \C_U \to \Omega^\bullet_X(\log D)(\hol_X(-D))
\end{equation}

\noindent being a quasi-isomorphism (that is, the complex $\Omega^\bullet_X(\log D)(\hol_X(-D))$ is exact at any $p \in D$)  \cite[Corollary 1.7.2]{NarvLinearJacIdeal}. The following example shows that it is possible that (\ref{EqLCTMapQuasiIsomJ}) is a quasi-isomorphism at a point without being so in a neighbourhood:

\begin{Ex}
\label{ExQuasiAt0ButNotLCT}
    The free divisor $D=V(f) \subset (\C^3,0)$ with $f=xyz(x^5 z + x^3 y^3 + y^5 z)$ verifies that (\ref{EqLCTMapQuasiIsomJ}) is a quasi-isomorphism at $0$ (due to homogeneity, see \cite[Remark 1.7.4]{NarvLinearJacIdeal}) but not in a neighbourhood. If this were the case, then LCT would hold for $D$ and so $D$ would be strongly Euler-homogeneous (recall that Conjecture \ref{ConjLCT-SEH} is true for $n=3$). But an easy computation (using, for instance, Theorem \ref{ThGeomEH}) shows that $D$ is not strongly Euler-homogeneous at any point $(0,0,z)$ with $z \neq 0$. However, we do not know if (\ref{EqLCTMapQuasiIsomIncl}) is a quasi-isomorphism at $0$ or not.
\end{Ex}

The original motivation behind this article was the study of Conjecture \ref{ConjLCT-SEH}, which we prove to be true in some new cases. To achieve this goal, we first need to advance in the comprehension of strong Euler-homogeneity, for which we are able to give some characterizations.

Let $\Der_X$ be the $\hol_X$-module of $\C$-derivations (or vector fields) of $\hol_X$. We say that a germ $f \in \hol_{X,p}$ is \emph{Euler-homogeneous} if it satisfies an equation of the form $\delta(f) = f$ for some $\delta = \sum_{i=1}^n a_i \partial_i \in \Der_{X,p}$. Such a derivation is called an \emph{Euler derivation} or an \emph{Euler vector field}.

All quasihomogeneous polynomials are Euler-homogeneous due to Euler's theorem but, of course, there are Euler-homogeneous functions that are not polynomials (take, for example, $f=e^{x_1}$ and $\delta = \partial_1$). However, K. Saito proved in 1971 that, for isolated singularities, quasihomogeneous polynomials are in essence all the examples. Precisely, he proved that, if $f$ defines an isolated singularity in a neighbourhood of $0$, then $f$ is Euler-homogeneous if and only if there exists a coordinate system in which $f$ becomes a quasihomogeneous polynomial \cite[Theorem 4.1]{Saito71}. For non-isolated singularities, this is no longer true, even for polynomials. As explained before, \emph{the four lines} is a counterexample.

When studying Euler-homogeneity of local equations of a divisor $D \subset X$ at a point $p \in D$, it is desirable that this property does not depend on the particular choice of local equation, which is guaranteed if we ask the Euler derivation to vanish at $p$. This is \emph{strong Euler-homogeneity}:

\begin{Def}
\label{DefFEH}
A germ of holomorphic function $f \in \hol_{X,p}$ is called \emph{strongly Euler-homogeneous at $p$} if there exists a germ of derivation $\delta$ vanishing at $p$ (i.e. $\delta \in \mm_{X,p} \Der_{X,p}$, where $\mm_{X,p}$ denotes the maximal ideal of the local ring $\hol_{X,p}$) such that $\delta(f) = f$. A divisor $D$ is said to be \emph{strongly Euler-homogeneous at $p \in D$} if some (or any) reduced local equation of $D$ at $p$ is strongly Euler-homogeneous and it is called \emph{strongly Euler-homogeneous on a subset $E \subset D$} if it is so at any $p \in E$. When $E=D$, we simply say that $D$ is \emph{strongly Euler-homogeneous}. 
\end{Def}

In order to study this property, \emph{logarithmic derivations (or vector fields) along $D$} turn out to be very useful. This concept was extensively studied by K. Saito in \cite{Saito80} and refers to those derivations leaving invariant the defining ideal of $D$. These derivations form an $\hol_X$-module, denoted by $\Der_X(-\log D)$. Locally, a derivation $\delta \in \Der_{X,p}$ belongs to $\Der_{X,p}(-\log D)$ if $\delta(f) \in (f)$ for some (or any) reduced local equation $f$ of $D$ at $p$. In this case, we will also say that $\delta$ is a \textit{logarithmic derivation for $f$}.

Consider a reduced local equation $f$ of $D$ at $p$ and a generating set $\mathcal{S}=\{\delta_1, \ldots, \delta_m\}$ of $\Der_{X,p}(-\log D)$. Once a local coordinate system $x_1, \ldots, x_n$ is chosen, we define the \emph{Saito matrix with respect to $\mathcal{S}$} as $A = (\delta_i(x_j))_{i,j}$, the $m \times n$ matrix whose entries are the coefficients in the expression of $\delta_1, \ldots, \delta_m$ as linear combinations of $\partial_1, \ldots, \partial_n$. Writing $\overline{\alpha}:=(\alpha_1, \ldots, \alpha_m)^t$, where $\alpha_i \in \hol_{X,p}$ is such that $\delta_i(f) = \alpha_i f$, we define the \emph{extended Saito matrix with respect to $\mathcal{S}$ and $f$} as the $m \times (n+1)$ matrix $\tilde{A} = (A \mid -\overline{\alpha})$. Strictly speaking, these matrices also depend on the coordinate system, but we will omit this for the sake of brevity.

When $\Der_X(-\log D)$ is a locally free $\hol_X$-module, we say that $D$ is a \emph{free divisor}. If $D$ is free, then $\Der_{X,p}(-\log D)$ is free of rank $n$ as an $\hol_{X,p}$-module for every $p \in X$. By Saito's criterion \cite[Theorem 1.8]{Saito80}, the freeness of $\Der_{X,p}(-\log D)$ is equivalent to the existence of some $\delta_1, \ldots, \delta_n \in \Der_{X,p} (-\log D)$ such that if $f \in \hol_{X,p}$ is a reduced local equation at $p$, then there exists a unit $u \in \hol_{X,p}$ with $f = u \det(A)$, where $A$ is the Saito matrix with respect to $\{\delta_1, \ldots, \delta_n\}$. In this case, $\{\delta_1, \ldots, \delta_n\}$ is automatically a basis of $\Der_{X,p}(-\log D)$ and we will also say that $f$ is \emph{free}. This close relation between the equation and logarithmic derivations makes free divisors easier to work with.

After setting some preliminary results in Section \ref{SectionPrelim}, we begin Section \ref{SectionCriterionSEH} by defining some analytic closed subsets $D_i \subset X$ (resp. $\tilde{D}_i \subset X$) as the vanishing locus of certain Fitting ideals and seeing that they can be explicitly described as the set of points at which the rank of $A$ (resp. $\tilde{A}$) is less or equal than $i$. This allows us to give a criterion for strong Euler-homogeneity in a neighbourhood of a point $p$ in terms of these sets. We also give the algebraic translation of this criterion and a necessary condition for a divisor to be strongly Euler-homogeneous outside $D_0$. 

K. Saito defined in \cite[Definition 3.8]{Saito80} \emph{holonomic divisors} as those for which, at each point, the logarithmic stratification is locally finite and characterized them in terms of the dimensions of the sets $D_i$ (he called them $A_r$). Later, Calderón-Moreno defined in \cite[Definition 4.1.1]{Cald1999} the notion of \emph{Koszul-freeness} for free divisors in terms of regular sequences in the graded ring $\gr \D_X$ (where $\D_X$ denotes, as usual, the sheaf of differential operators on $X$). Although this was not initially noticed, these two notions turned out to be the same for free divisors \cite[Theorem 7.4]{GMNS}. In particular, Koszul-freeness can also be characterized in terms of these sets.

Still in the free setting, in \cite[Definition 1.13]{NarvDual} and \cite[Remark 7.3]{SymmetryGS}, \emph{weak} and \emph{strong Koszul-freeness} were introduced (see Definition \ref{DefKoszul}). While studying these properties, we asked ourselves if they could be extended to non-free divisors. This is what led us to define, by analogy, weak and strong Saito-holonomicity (Definition \ref{DefSaitoHolonomic}). And we found out that they have a characterization in terms of the dimensions of the sets $\tilde{D}_i$ (Theorem \ref{ThCaractSH}). This comprises the first half of Section \ref{SectionSaitoHolonomic}.

The rest of Section \ref{SectionSaitoHolonomic} is devoted to study these new versions of Saito-holonomicity and their relation with strong Euler-homogeneity and another interesting property: being of linear Jacobian type. In \cite[Proposition 1.11 and Theorem 4.7]{NarvDual}, it was shown that being strongly Koszul-free and being of linear Jacobian type are equivalent for free divisors and that, under Koszul-freeness hypothesis, strong Koszul-freeness is equivalent to strong Euler-homogeneity (among other properties). We use the developed criteria for strong Euler-homogeneity and (strong) Saito-holonomicity to generalize these results where possible.

Formal coordinate changes are much more flexible and they permit a particularly nice description of the module of formal logarithmic derivations. Thus, it is interesting to extend the criteria developed in Sections \ref{SectionCriterionSEH} and \ref{SectionSaitoHolonomic} to the ring of formal power series. This is done in Section \ref{SectionConvToFormal}, whose results will be very helpful in the following section.

In Section \ref{SectionConj}, we prove Conjecture \ref{ConjLCT-SEH} in some new cases. It is divided into three subsections. In Subsection \ref{SubsectionPuncturedBall} we study free divisors that are strongly Euler-homogeneous in a punctured neighbourhood of a point, characterizing when they are strongly Euler-homogeneous at the given point in terms of the existence of non-topologically nilpotent singular logarithmic derivations (Theorem \ref{ThSEHoutside0}). As a consequence, we prove a weak version of the conjecture in arbitrary dimension: if $D$ is a germ of free divisor in $(\C^n,0)$ satisfying LCT and being strongly Euler-homogeneous outside a discrete set of points, then it is so everywhere (Theorem \ref{ThConjOutsideDiscreteSet}). In Subsection \ref{SubsectionWK}, we use this result to prove that Conjecture \ref{ConjLCT-SEH} also holds in arbitrary dimension for weakly Koszul-free divisors (Theorem \ref{ThConjWKoszul}). Subsection \ref{SubsectionDim4} presents the proof of the conjecture for the $4$-dimensional case (Theorem \ref{ThConjDim4}), after developing an intrinsic version of the formal structure theorem (Theorem \ref{ThFST}).

Finally, M. Granger, D. Mond, A. Nieto and M. Schulze posed a conjecture in \cite{GMNS} stating that all linear free divisors satisfy LCT and are strongly Euler-homogeneous, and they proved it for ambient dimension $n \leq 4$. M. Granger and M. Schulze also conjectured in \cite[Conjecture 1.5]{SymmetryGS} that all linear free divisors have $b$-functions with symmetric roots about $-1$. We present in Section \ref{SectionLFD} a counterexample in $n=5$ for these two conjectures. Nonetheless, from the previous results, we are also able to show that Conjecture \ref{ConjLCT-SEH} holds for linear free divisors in ambient dimension $n=5$. \\

\section{Preliminaries}
\label{SectionPrelim}

Within this section, we fix the notation and sum up the main results of \cite[Sections 2 and 3]{dVR24}, which are going to be used throughout the rest of the text. Due to their different nature, we have divided them into three subsections.

Since the properties we are dealing with are local, it will be enough to consider a germ of free divisor $D=V(f)$ in $(\C^n, 0)$. Let $(\hol, \mm):= (\hol_{\C^n,0}, \mm_{\C^n,0})$, which can be identified with the convergent power series ring $\C\{x_1, \ldots, x_n\}$. Let $\Der := \Der_{\C^n,0}$ be the $\hol$-module of $\C$-derivations of $\hol$, which is free with a basis given by the partial derivatives $\{\partial_1, \ldots, \partial_n\}$ and let $\Der_f := \Der_{\C^n,0}(-\log D)$ be the $\hol$-module of logarithmic derivations for $f$. Recall that $\Der_g = \Der_f$ if $g$ is another reduced local equation of $D$ (i.e. $g=uf$ for some unit $u$).

\subsection{About products with smooth factors}

An important concept that will be useful is that of a \emph{product} (\emph{with a smooth factor}, but we will omit this for the sake of brevity):

\begin{Def}
    Let $(D,p)$ be a germ of a divisor in a complex analytic manifold $X$ of dimension $n$. We will say that $(D,p)$ is a \emph{product with a smooth factor} (from now on, just \emph{product}) if there exists some germ of divisor $(D', 0) \subset (\C^{n-1},0)$ such that $(D,p)$ is biholomorphic to $(D',0) \times (\C,0)$. In other words, if $f \in \hol_{X,p}$ is a reduced local equation of $D$ at $p$, then there exists a coordinate change such that $f=ug$ for some unit $u$ and some convergent power series $g$ in $n-1$ variables. In this case, we will also say that $f$ is a \emph{product (with a smooth factor) at $p$}.
\end{Def}

The following is an equivalent algebraic condition:

\begin{Lem}
\label{LemAlgProd}
Let $f$ be a reduced local equation of a divisor $D \subset X$ at a point $p \in D$. Then, $f$ is a product at $p$ if and only if it admits a non-singular logarithmic derivation, that is, if $\Der_{X,p}(-\log D)$ is not contained in $\mm_p \Der_{X,p}$. 
\end{Lem}

\begin{proof}
    It can be easily deduced from \cite[Lemma 3.5]{Saito80}.
    
\end{proof}

The next lemma (see \cite[Lemma 2.2]{CNM96} and \cite[Lemma 3.2]{GS}) will allow us, by applying an inductive argument, to just consider the case in which the local equation of the divisor is not a product:

\begin{Lem}
\label{LemProd}
Let $f \in \hol$ be reduced. Suppose $f$ is a product, so that there exists $g \in \hol$ depending only on $n-1$ variables in some coordinate system with $f=ug$ for some unit $u$. Then, freeness and strong Euler-homogeneity are equivalent for $f$ and $g$. Moreover, the divisor defined by $f$ in $(\C^n,0)$ satisfies LCT if and only if the divisor defined by $g$ in $(\C^{n-1},0)$ does.
\end{Lem}

\subsection{From convergent to formal definitions}

If we see convergent power series as formal power series we have a wider range of coordinate changes, which gives us more flexibility. Thus, we will also consider $\what{\hol} = \C[[x_1, \ldots, x_n]]$, the $\mm$-adic completion of $\hol$, which is the local ring of formal power series with maximal ideal $\what{\mm} = (x_1, \ldots, x_n)$.

Let us denote by $\what{\Der}$ the $\mm$-adic completion of $\Der$, which coincides with the $\what{\hol}$-module of $\C$-derivations of $\what{\hol}$ and is free with a basis given by the partial derivatives. As in the convergent case, for a formal power series $g \in \what{\hol}$, we will denote by $\Der_g$ the $\what{\hol}$-module of formal logarithmic derivations for $g$, formed by those $\delta \in \what{\Der}$ for which $\delta(g) \in (g)$. If we denote an element $f \in \hol$ by $\hat{f}$ when seen inside $\what{\hol}$, then by flatness the $\what{\hol}$-module $\Der_{\hat{f}}$ is precisely the $\mm$-adic completion of the $\hol$-module $\Der_f$ and we will denote it by $\what{\Der}_f$.

\begin{Rmk}
    As in the convergent case, if $g,h \in \what{\hol}$ are such that $h=ug$ for some formal unit $u$, then $\Der_h = \Der_g$.
\end{Rmk}

The notions of being a product and strongly Euler-homogeneous can be extended in a natural way to formal power series:

\begin{Def}
    Let $f \in \what{\hol}$. We say that $f$ is:

    \begin{itemize}
        \item[(1)] a \emph{product} if $\Der_f \not\subset \mm \what{\Der}$.
        \item[(2)] \emph{strongly Euler-homogeneous} if there exists a formal derivation $\delta \in \mm \what{\Der}$ such that $\delta(f)=f$.
    \end{itemize}

    We say that $f \in \hol$ is a \emph{formal product} or \emph{formally strongly Euler-homogeneous} if it verifies the corresponding property as an element of $\what{\hol}$.
\end{Def}

The next result (see \cite[Propositions 2.1, 2.2 and 2.3]{dVR24}) ensures that these definitions (and that of being reduced) coincide when convergent power series are seen as formal ones. Thus, we will just have to prove that $f$ satisfies the required property as a formal power series (which, in general, will be easier): 

\begin{Prop}
\label{PropForm}
    Let $f \in \hol$. Then,

    \begin{itemize}
        \item[(1)] $f$ is reduced in $\hol$ if and only it is reduced in $\what{\hol}$.
        
        \item[(2)] $f$ is strongly Euler-homogeneous if and only it is formally strongly Euler-homogeneous.
        
        \item[(3)] $f$ is a product if and only if it is a formal product.
    \end{itemize}
\end{Prop}

\subsection{About singular derivations}

Once a coordinate system is chosen, each derivation $\delta \in \Der$ can be uniquely decomposed as a sum $\delta = \sum_{i=-1}^\infty \delta_i$, where $\delta_i = \sum_{j=1}^n a_{ij} \partial_j$ with $a_{ij}$ being homogeneous of degree $i+1$ for all $j=1, \ldots, n$. Moreover, $\delta_0$, which is called the \emph{linear part of $\delta$}, can be written as $\underline{x} A \overline{\partial}$, where $A$ is a constant matrix, $\underline{x} = (x_1, \ldots, x_n)$ and $\overline{\partial} = (\partial_1, \ldots, \partial_n)^t$. The same is true for a formal derivation $\delta \in \what{\Der}$. We say $\delta \in \Der$ (resp. $\delta \in \what{\Der}$) is \emph{singular} if $\delta_{-1} = 0$ or, equivalently, $\delta \in \mm \Der$ (resp. $\delta \in \mm \what{\Der} = \what{\mm} \what{\Der}$).

We are going to make use of the Jordan-Chevalley decomposition for a singular formal derivation developed by R. Gérard and A. Levelt \cite[Théorèmes 1.5, 2.2 and 2.3]{GL}. To that end, we need to extend the classical concepts of semisimple and nilpotent endomorphisms of a finite dimensional vector space. This can be done in a proper way for some vector spaces of infinite dimension \cite[\S 1]{GL}. However, we will give here equivalent definitions that will be more practical for our purpose.

A formal singular derivation $\delta \in \mm\what{\Der}$ leaves invariant every $\what{\mm}^k$, so it induces a map in each quotient $\delta^{(k)}: \what{\hol}/\what{\mm}^k \to \what{\hol}/\what{\mm}^k$. 

\begin{Def}
\label{DefSN}
Let $\delta \in \mm\what{\Der}$. We say $\delta$ is:
    
\begin{itemize}
    \item[(1)] \textit{semisimple} if the induced map $\delta^{(k)}$ is diagonalizable for all $k \in \N$.
    \item[(2)] \textit{topologically nilpotent} if the induced map $\delta^{(k)}$ is nilpotent for all $k \in \N$.
\end{itemize}
\end{Def}

\begin{Th}[Jordan-Chevalley decomposition]
\label{ThChangeCoordDiag}
Let $\delta$ be a singular derivation of $\what{\hol}$. Then, there exist two unique commuting singular derivations $\delta_S$, $\delta_N$ such that $\delta_S$ is semisimple, $\delta_N$ is topologically nilpotent and $\delta = \delta_S + \delta_N$. Moreover, there exists a regular system of parameters of $\what{\hol}$ (i.e. a formal coordinate system) in which $\delta_S$ is diagonal (i.e. $\delta_S$ is of the form $\sum_{i=1}^n \lambda_i x_i \partial_i$).
\end{Th}

The following proposition collects some useful facts about semisimple and topologically nilpotent derivations (see \cite[Section 3 and Lemma 5.1]{dVR24}):

\begin{Prop}
\label{PropSNdVR24}
    Let $\delta \in \mm \what{\Der}$ and $f \in \what{\hol}$, $f \neq 0$. The following holds:

    \begin{itemize}
    
        \item[(1)] $\delta$ is topologically nilpotent if and only if the matrix of its linear part is nilpotent. In particular, if $\delta$ has no linear part, then it is topologically nilpotent.
        
        \item[(2)] If $\delta$ is topologically nilpotent and $\delta(f) = \alpha f$ for some $\alpha \in \what{\hol}$, then $\alpha \in \what{\mm}$.

        \item[(3)] If $\delta$ is logarithmic for $f$, then so are $\delta_S$ and $\delta_N$.
        
        \item[(4)] If $\delta$ is semisimple and $\delta(f) = \alpha f$ for some $\alpha \in \what{\hol}$, then there exists a formal unit $u$ such that, for $g=uf$, $\delta(g) = \alpha_0 g$, where $\alpha_0$ is the constant term of $\alpha$.
    \end{itemize}
\end{Prop}

We define the \textit{trace} of a singular (convergent or formal) derivation as the trace of the linear map induced in the quotient $\mm/\mm^2$ (or $\what{\mm}/\what{\mm}^2$). In coordinates, this is just the trace of the matrix of its linear part. A key point in the proof of the known cases of Conjecture \ref{ConjLCT-SEH} is the existence of singular derivations with non-zero trace when LCT holds. This will also be important in the proof of the new cases:

\begin{Th}
\label{ThZeroTrace}
    If $D$ is a germ of free divisor in $\C^n$ satisfying LCT and $f \in \hol$ is a reduced local equation of $D$ at $0$, then there exist singular derivations in $\Der_f$ (and also in $\what{\Der}_f$) with non-zero trace.
\end{Th}

\begin{proof}
    It can be deduced from \cite[Sections 2 and 3]{2002} and \cite[Lemma 7.5]{GS}. See \cite[Section 4]{dVR24} for a $\D$-module argument.
    
\end{proof}

\section{Strong Euler-homogeneity around a point}
\label{SectionCriterionSEH}

Let $D=V(f)$ be a germ of divisor in $(\C^n, 0)$ with $n \geq 2$. We first introduce some analytic sets that will be used throughout the rest of the text and will play a crucial role in the characterization of strong Euler-homogeneity and Saito-holonomicity properties.

Let $\mathcal{S}=\{\delta_1, \ldots, \delta_m\}$ be a generating set of $\Der_f$ (note that $m \geq n$ by coherence, since at a smooth point there cannot be less than $n$ generators). Let $A = (\delta_i(x_j))_{i,j}$ be the Saito matrix with respect to $\mathcal{S}$. Let $\alpha_i \in \hol$ be such that $\delta_i(f) = \alpha_i f$ for $i=1, \ldots, m$ and let $\tilde{A} = (A|-\overline{\alpha})$ be the extended Saito matrix with respect to $\mathcal{S}$ and $f$. 

For an $\hol$-module $M$, let $\Fitt_i(M)$ denote the $i$-th Fitting ideal of $M$. Consider the Jacobian ideal $\mathcal{J}_f = \hol(\partial_1(f), \ldots, \partial_n(f), f)$ and the $\hol$-module $\mathcal{J}_f/(f)$ generated by the classes of the partial derivatives $[\partial_1(f)], \ldots, [\partial_n(f)]$. We set $I_i := \Fitt_{n-i}\left(\mathcal{J}_f/(f)\right)$ and $D_i := V(I_{i+1})$.

The rows of $A$ are a generating set of the syzygies of $[\partial_1(f)], \ldots, [\partial_n(f)]$. A free presentation of $\mathcal{J}_f/(f)$ as an $\hol$-module is then:

$$ \hol^m \stackrel{\cdot A}{\longrightarrow} \hol^n \stackrel{\underline{e}_i \mapsto [\partial_i(f)]}{\xrightarrow{\hspace{1 cm}}} \mathcal{J}_f/(f) \to 0. $$

\vspace{0,3 cm}

Thus, $I_i$ is the ideal generated by the minors of order $i$ of $A$ for $1 \leq i \leq n$, $I_i = \hol$ for $i \leq 0$ and $I_i = 0$ for $i \geq n+1$. The set $D_i$ is then the vanishing locus of all minors of order $(i+1)$, that is, the set of those points at which $A$ has rank less or equal than $i$:

\vspace{0,2 cm}

$$ D_i = \{ p \in (\C^n,0) \mid \rank A(p) \leq i\}.$$

\vspace{0,1 cm}

Let us see some properties of these sets: 

\begin{Prop}
\label{PropD}
    The following properties hold:

    \begin{itemize}
        \item[(1)] $D_0 \subset D_1 \subset \ldots \subset D_{n-1} \subset D_n$. 
        \item[(2)] $D_n = (\C^n,0)$ and $D_{n-1} = D$. 
        \item[(3)] If $p \in D \setminus D_k$ for some $0 \leq k \leq n-2$, then $(D,p) \cong (D',0) \times (\C^{k+1},0)$, where $(D',0)$ is a germ of divisor in $(\C^{n-k-1},0)$.

        In particular, if $p \in D \setminus D_0$, then $(D,p) \cong (D',0) \times (\C,0)$ and $D$ is a product with a smooth factor at $p$.
    \end{itemize}

\end{Prop}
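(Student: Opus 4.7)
The plan is to handle the three parts separately, using throughout the rank description $D_i = \{p : \rank A(p) \leq i\}$ that was just established.

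Part (1) is immediate from this characterization, since $\rank A(p) \leq i$ implies $\rank A(p) \leq i+1$. For part (2), $D_n = (\C^n, 0)$ follows because $A$ has only $n$ columns, so $I_{n+1} = 0$. For $D_{n-1} = D$, if $p \notin D$ then $f$ is a unit in $\hol$, hence $\Der_f = \Der$; the $\delta_i$'s then generate $\Der$, so their values $\delta_i(p)$ span $T_p \C^n$ and $\rank A(p) = n$, giving $D_{n-1} \subset D$. Conversely, the identity
$$ A \cdot (\partial_1(f), \ldots, \partial_n(f))^t = f \cdot \overline{\alpha} $$
coming from $\delta_i(f) = \alpha_i f$ shows that at a smooth point $p \in D$ the vector $(\partial_j(f)(p))_j$ is a nonzero element of $\ker A(p)$, so $\rank A(p) \leq n-1$. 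Because $D_{n-1}$ is closed and the smooth locus of $D$ is dense in $D$, this inclusion extends to all of $D$.

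For part (3), the main tool is the flow-box theorem applied iteratively. Since $p \in D \setminus D_k$, $\rank A(p) \geq k+1$, so from the generating set one can extract $\delta_1, \ldots, \delta_{k+1}$ whose values at $p$ are linearly independent. Starting from $\delta_1(p) \neq 0$, rectify $\delta_1$ to $\partial_{y_1}$ in suitable local coordinates $y_1, \ldots, y_n$ centred at $p$; integrating the ODE $\partial_{y_1} f = \alpha_1 f$ in the variable $y_1$ yields $f = u(y_1, \ldots, y_n) \cdot g(y_2, \ldots, y_n)$ with $u$ a unit, and hence $(D, p) \cong (V(g), 0) \times (\C, 0)$.

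To iterate, write $\delta_i = a_{i1} \partial_{y_1} + \tilde{\delta}_i$ for $i = 2, \ldots, k+1$. A direct computation using $f = u g$ shows $\tilde{\delta}_i(g) = (\alpha_i - \delta_i(u)/u) g$, so each $\tilde{\delta}_i$ is logarithmic for $g$; and after replacing $\delta_i$ by $\delta_i - a_{i1}(p) \delta_1$, the values $\tilde{\delta}_i(0)$ coincide with $\delta_i(p)$ and remain linearly independent modulo $\delta_1(p) = \partial_{y_1}|_0$, giving $k$ independent logarithmic directions for $g$ in $(\C^{n-1}, 0)$. Induction on $k$ then produces $(V(g), 0) \cong (D', 0) \times (\C^k, 0)$, yielding the desired decomposition $(D,p) \cong (D',0) \times (\C^{k+1},0)$. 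The main obstacle is precisely this iteration step: one must verify that projection modulo $\partial_{y_1}$ preserves both the logarithmic condition (which relies on $u$ being a unit so that $\delta_i(u)/u \in \hol$) and the rank bound at the new origin.
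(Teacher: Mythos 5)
Your argument is correct in substance, but it takes a genuinely different route from the paper on both nontrivial points. For $D_{n-1}=D$ the paper simply quotes the result of Hauser and M\"uller that $D=\{p \mid \dim \Der_f(p)\le n-1\}$, whereas you argue directly: off $D$ the function $f$ is a unit, so $\Der_f=\Der$ and the rank is $n$, while the identity $A\cdot(\partial_1(f),\dots,\partial_n(f))^t=f\,\overline{\alpha}$ forces $\rank A(p)\le n-1$ at smooth points of $D$ (using that $f$ is reduced, so $\nabla f$ does not vanish there), and density of the smooth locus plus closedness of $D_{n-1}$ finishes the inclusion; this is a perfectly good self-contained substitute for the citation. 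For part (3) the paper proves by induction that $k+1$ partial derivatives of $f$ lie in the ideal generated by $f$ and the remaining ones near $p$, and then invokes Saito's Lemma 3.5 to kill $k+1$ variables, whereas you re-prove the needed product statement from scratch by rectifying a nonvanishing logarithmic field, integrating $\partial_{y_1}f=\alpha_1 f$ to write $f=ug$ with $g$ independent of $y_1$, and iterating; this is essentially the standard proof of Saito's triviality lemma, so your route buys self-containedness at the price of redoing known work. Two details should be made explicit to close your iteration step: first, $\tilde{\delta}_i=\delta_i-a_{i1}\partial_{y_1}$ still has coefficients depending on $y_1$, so it is not yet a vector field on $(\C^{n-1},0)$; you must restrict its coefficients, together with the factor $\alpha_i-\delta_i(u)u^{-1}$, to $y_1=0$, which is harmless exactly because $g$ is $y_1$-independent and which also preserves the value at the new origin. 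Second, your induction is naturally phrased as ``there exist $k+1$ logarithmic derivations for $f$ whose values at $p$ are linearly independent''; this is equivalent to $p\notin D_k$ because the values at $p$ of any generating set of $\Der_f$ span $\Der_f(p)$, and the equivalence deserves a sentence, since the derivations you produce for $g$ need not belong to a chosen generating set of $\Der_g$. With these two remarks added, your proof is complete.
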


\begin{proof}

        As $I_{i+1} \subset I_i$ for all $i$, then $D_{i-1} = V(I_i) \subset V(I_{i+1}) = D_i$, which proves $(1)$. 
        
        Since $A$ is an $m \times n$ matrix with $m \geq n$, it is clear that $\rank A(p) \leq n$ for all $p$, so $D_n = (\C^n,0)$. By \cite[(1.5), iii)]{Saito80}, the inclusion $I_n \subset (f)$ holds, so $D = V(f) \subset V(I_n) = D_{n-1}$. The reverse inclusion is clear, since at a point $p$ outside $D$ all derivations are logarithmic and so the rank of $A(p)$, which is the dimension of the $\C$-vector subspace of $T_p(\C^n,0)$ generated by $\delta_1(p), \ldots, \delta_m(p)$, is $n$. This proves $(2)$.
        
        Let $p \in D \setminus D_k$, so $\rank A(p) > k$. Let us see by induction on $k$ that, in a neighbourhood of $p$, at least $k+1$ partial derivatives of $f$ belong to the ideal generated by $f$ and the rest of them.

        For $k=0$, at least one of the entries $a_{ij}$ (that corresponds to the coefficient of $\partial_j$ in $\delta_i$) of $A$ cannot vanish at $p$, so it is a unit in $\hol_{X,p}$. Thus, we can isolate $\partial_j (f)$ in $\delta_i(f) \in (f)$ and see that $\partial_j (f)$ belongs to the ideal generated by $f$ and $\partial_k (f)$, $k \neq j$.

        Suppose the result is true for $k \geq 0$ and let $\rank A(p) > k+1$. As before, we can find $i_0,j_0$ such that $a_{i_0j _0}(p) \neq 0$. Now set $\delta'_k = \delta_k-\dfrac{a_{kj_0}}{a_{i_0 j_0}} \delta_{i_0}$ for $k \neq i_0$ and $\delta'_{i_0} = \delta_{i_0}$. Let $A'$ be the Saito matrix with respect to $\{\delta'_1, \ldots, \delta'_m\}$ (which is still a generating set) and note that $a'_{k j_0}(p) = 0$ for $k \neq i_0$. Since $\rank A'(p) = \rank A(p) > k+1$ and $a'_{i_0 j_0}(p) = a_{i_0 j_0}(p) \neq 0$, the rank of $A'(p)$ is exactly $1 + \rank B(p)$, where $B$ is the submatrix of $A'$ formed by the entries $a'_{ij}$ with $i \neq i_0$, $j \neq j_0$. Thus, $\rank B(p) > k$ and, by induction hypothesis, at least $k+1$ partial derivatives of $f$ belong to the ideal generated by $f$ and the rest of them except $\partial_{j_0} (f)$. But, reasoning as in the case $k=0$, $\partial_{j_0} (f)$ also belongs to this ideal, so we conclude that at least $k+2$ partial derivatives of $f$ belong to the ideal generated by $f$ and the rest of them.
        
        Now, by \cite[Lemma 3.5]{Saito80}, there exist a unit $u$ and a local coordinate change such that $g=uf$ (which is another equation of $D$) depends only on $n-k-1$ variables. If we set $D' = V(g) \subset \C^{n-k-1}$, we get $(3)$. 
        
\end{proof}

\begin{Rmk}
    Recall that strong Euler-homogeneity is preserved under smooth factors. Therefore, if $p \in D \setminus D_0$, so that $(D,p) \cong (D',0) \times (\C,0)$, then $D$ is strongly Euler-homogeneous at $p$ if and only if so is $D'$ at $0$ \cite[Lemma 3.2]{GS}.
\end{Rmk}

Now, call $\tilde{I}_i := \Fitt_{n+1-i}\left(\mathcal{J}_f \right)$ and $\tilde{D}_i := V(\tilde{I}_{i+1})$. The rows of $\tilde{A}$ generate the syzygies of $\partial_1(f), \ldots, \partial_n(f), f$, so a free presentation of $\mathcal{J}_f$ as an $\hol$-module is:

$$ \hol^m \stackrel{\cdot \tilde{A}}{\longrightarrow} \hol^{n+1} \stackrel{\substack{\underline{e}_i \mapsto \partial_i(f), \hspace{0,1 cm} i=1, \ldots, n \\ \underline{e}_{n+1} \mapsto f}}{\xrightarrow{\hspace{2 cm}}} \mathcal{J}_f \to 0. $$

\vspace{0,3 cm}

Therefore, $\tilde{I}_i$ is the ideal generated by the minors of order $i$ of $\tilde{A}$ for $1 \leq i \leq n$, $\tilde{I}_i = \hol$ for $i \leq 0$ and $\tilde{I}_i = 0$ for $i \geq n+1$ (in the case $m>n$, the minors of order $n+1$ of $\tilde{A}$ are zero because there is always a non-trivial relation between any $n+1$ vector fields that trivially extends to the last column of $\tilde{A}$). As before, $\tilde{D}_i$ is the set of those points at which $\tilde{A}$ has rank less or equal than $i$:

\vspace{0,2 cm}

$$ \tilde{D}_i = \{p \in (\C^n,0) \mid \rank \tilde{A}(p) \leq i\}.$$

\vspace{0,2 cm}

\begin{Rmk}
$ $

\begin{itemize}
    \item[(a)] Note that, if $g$ is another reduced equation of $D$, then $\mathcal{J}_g = \mathcal{J}_f$ and $\mathcal{J}_f/(f) = \mathcal{J}_g/(g)$. Also, Fitting ideals do not depend on the choice of presentation. Thus, $D_i$ (resp. $\tilde{D}_i$) and $I_i$ (resp. $\tilde{I}_i$), which in principle might depend on the equation of $D$ or on the Saito matrix with respect to the chosen basis, depend only on $D$ itself.

    \item[(b)] Note that the ideal generated by the $\alpha_i$ can be written as the colon ideal $(\partial_1(f), \ldots, \partial_n(f)) : f$ (from now on, we will just write $\partial f : f$ for short). Whereas $I_i$ and $\tilde{I}_i$ do not depend on the choice of $f$, the ideal $\partial f : f$ does depend on such a choice. Indeed, if $g=uf$ for some unit $u$, then 
    
    \vspace{-0,2 cm}
    
    $$\partial g : g = (\alpha_1+\delta_1(u)u^{-1}, \ldots, \alpha_m + \delta_m(u)u^{-1}).$$

    \vspace{0,3 cm}
    
\end{itemize}
\end{Rmk}

The sets $\tilde{D}_i$ and ideals $\tilde{I}_i$ are related with the previous ones in the following way:

\begin{Prop}
\label{PropDIrel}
    The following properties hold:

    \begin{itemize}
        \item[(1)] $\tilde{I}_{i+1} \subset I_{i+1} + (\alpha_1, \ldots, \alpha_m) I_i \subset I_i \subset \tilde{I}_i$ \hspace{0,1 cm} for all $i=1, \ldots, n-1$.
        
        \item[(2)] $\tilde{D}_i \subset D_i \subset \tilde{D}_{i+1}$ \hspace{0,1 cm} for all $i=0, \ldots, n-2$.

        \item[(3)] $\tilde{D}_{n-1} = D_{n-2} = \Sing D$.
    \end{itemize}
\end{Prop}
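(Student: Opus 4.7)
The plan is to prove (1) by a direct computation with minors of $A$ and $\tilde A$, to derive (2) immediately from (1) by taking vanishing loci, and to combine (1), Proposition \ref{PropD}(3), and a Nakayama-type argument to obtain (3).

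For (1), I would verify each inclusion in turn. Any $(i+1) \times (i+1)$ minor of $\tilde A$ either does not use its last column—in which case it is an $(i+1) \times (i+1)$ minor of $A$ and hence lies in $I_{i+1}$—or it does, in which case Laplace expansion along the last column (whose selected entries are $-\alpha_{k}$) writes it as a sum of $\pm \alpha_k$ times $i \times i$ minors of $A$, which lies in $(\alpha_1, \ldots, \alpha_m) I_i$. The middle inclusion is immediate since $I_{i+1} \subset I_i$ (Fitting ideals are nested) and $I_i$ is an ideal, while $I_i \subset \tilde I_i$ holds because $A$ is a submatrix of $\tilde A$. Taking $V(-)$ in (1) then gives (2) at once: $\tilde I_{i+1} \subset I_i$ yields $D_i \subset \tilde D_{i+1}$ (after re-indexing), and $I_{i+1} \subset \tilde I_{i+1}$ yields $\tilde D_i \subset D_i$.

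For (3), I would split into $D_{n-2} = \Sing D$ and $\tilde D_{n-1} = D_{n-2}$. For the first equality: Proposition \ref{PropD}(3) with $k = n-2$ shows that any $p \in D \setminus D_{n-2}$ satisfies $(D, p) \cong (D', 0) \times (\C^{n-1}, 0)$ with $D'$ a germ of divisor in $(\C, 0)$, necessarily equal to $\{0\}$, so $p$ is smooth in $D$, giving $\Sing D \subset D_{n-2}$. Conversely, at a smooth $p \in D$ with $f = x_1$ locally, the derivations $\partial_2, \ldots, \partial_n$ lie in $\Der_f$ and evaluate to an $(n-1)$-dimensional subspace of $\Der_f(p) \subset T_p D$ (itself of dimension $n-1$), so $\rank A(p) = n-1$ and $p \notin D_{n-2}$. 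For the second equality, the inclusion $D_{n-2} \subset \tilde D_{n-1}$ follows from (1) (namely $\tilde I_n \subset I_{n-1}$). For the reverse, tensoring the presentation $\hol^m \stackrel{\cdot \tilde A}{\longrightarrow} \hol^{n+1} \to \mathcal{J}_f \to 0$ with $\hol_p/\mm_p \cong \C$ and applying Nakayama yields $(n+1) - \rank \tilde A(p) = \mu(\mathcal{J}_{f,p})$, the minimal number of generators. At any $p \notin \Sing D$, either $f$ or some $\partial_j f$ is a unit in $\hol_p$, so $\mathcal{J}_{f,p} = \hol_p$ is principal; hence $\rank \tilde A(p) = n$ and $p \notin \tilde D_{n-1}$. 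This gives $\tilde D_{n-1} \subset \Sing D = D_{n-2}$.

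The main obstacle is the reverse inclusion $\tilde D_{n-1} \subset D_{n-2}$ in (3): one needs to translate the rank drop of $\tilde A$ at $p$ into a statement about the number of generators of $\mathcal{J}_{f,p}$ via Nakayama, and then to notice that at non-singular points of $D$ this ideal is principal. The other parts of the argument are routine once (1) is in hand.
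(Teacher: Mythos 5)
Your proof is correct. Parts (1) and (2) coincide with the paper's argument (Laplace expansion along the last column of $\tilde{A}$, nestedness of the $I_i$, and the inclusion-reversing property of $V$), and for (3) both you and the paper obtain $\Sing D \subset D_{n-2}$ from Proposition \ref{PropD}(3). The genuine difference is in how the inclusion $\tilde{D}_{n-1} \subset \Sing D$ is reached: the paper invokes \cite[Theorem 4.1]{EisenbudFitting}, using $\htt \mathcal{J}_f \geq 2$ to get $\sqrt{\Fitt_1(\mathcal{J}_f)} = \sqrt{\mathcal{J}_f}$, hence $\tilde{D}_{n-1} = \Sing D$ in one stroke; you instead tensor the presentation with the residue field and apply Nakayama to identify $(n+1)-\rank \tilde{A}(p)$ with $\mu(\mathcal{J}_{f,p})$, observe that $\mathcal{J}_{f,p}=\hol_p$ at any $p \notin \Sing D$, and then close the circle $\tilde{D}_{n-1} \subset \Sing D \subset D_{n-2} \subset \tilde{D}_{n-1}$ via (2). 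Your route is self-contained and only uses the elementary direction of the Fitting-ideal/minimal-generators dictionary (the harder direction of Eisenbud's theorem is replaced by the smooth-factor argument you share with the paper), at the cost of being slightly longer and of implicitly using that the chosen presentation remains a presentation at stalks near the origin (coherence), a convention the paper also relies on, e.g.\ for $\rank A(p) = \dim \Der_f(p)$. Two small remarks: your explicit computation that $\rank A(p)=n-1$ at smooth points of $D$ is redundant, since $D_{n-2}\subset \tilde{D}_{n-1} \subset \Sing D$ already gives $D_{n-2} \subset \Sing D$; and both proofs deliver the radical identity $\sqrt{\tilde{I}_n}=\sqrt{\mathcal{J}_f}$ (yours via the Nullstellensatz from the set-theoretic equality), which is what is reused later in Section \ref{SectionConvToFormal}.
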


\begin{proof}
    The minors of order $i+1$ of $\tilde{A}$ are the ones of $A$ together with those involving the last column $-\overline{\alpha}$, which are (expanding them by this column) linear combinations of minors of order $i$ of $A$ with coefficients in the $\alpha_i$. This justifies the first inclusion of $(1)$. The second inclusion is due to the fact that $I_{i+1} \subset I_i$ for all $i$. The last inclusion holds because, $A$ being a submatrix of $\tilde{A}$, the minors of order $i$ of $A$ are also minors of order $i$ of $\tilde{A}$. Since the map $V$ is inclusion-reversing, $(2)$ is immediate from $(1)$. 
    
    Let us prove $(3)$. Since $\dim (\Sing D) \leq n-2$ (or $\htt \mathcal{J}_f \geq 2$), by \cite[Theorem 5.1]{EisenbudFitting}, we have that $\sqrt{\mathcal{J}_f} =\sqrt{\Fitt_1(\mathcal{J}_f)} =  \sqrt{\tilde{I}_n}$. Thus,
    
    $$\tilde{D}_{n-1} = V(\tilde{I}_n) = V\left(\sqrt{\tilde{I}_n}\right) = V\left(\sqrt{\mathcal{J}_f}\right) = V(\mathcal{J}_f) = \Sing D.$$ 

    By $(2)$, $D_{n-2} \subset \tilde{D}_{n-1} = \Sing D$. Now, suppose that there exists $p \in \Sing D \setminus D_{n-2}$. Then, by $(3)$ in Proposition \ref{PropD}, $(D,p) \cong (\C^{n-1},0)$. But this is a contradiction because $ (\C^{n-1},0)$ is non-singular, so $D_{n-2} = \Sing D$ and we have $(3)$.        
    
\end{proof}

\begin{Rmk}
    Since $D_{n-1} = D$ and $\tilde{D}_{n-1} = \Sing D$, we have that $D_i$ and $\tilde{D}_i$ are closed analytic subsets of $D$ for $0 \leq i \leq n-1$.
\end{Rmk}

Now, we are going to give algebraic and geometric characterizations of strong Euler-homogeneity for points in a neighbourhood of $0$.

As $\tilde{D}_{n-1} = D_{n-2}$, we may wonder if it is always true that $\tilde{D}_i = D_{i-1}$ for all $i = 0, \ldots, n-1$ (where $D_{-1} = V(I_0) = V(\hol) = \varnothing$). We are going to see that this property is, in fact, equivalent to strong Euler-homogeneity in a neighbourhood of the origin.

This criterion is based on \cite[Lemma 7.5]{GMNS}, where the authors give a characterization of strong Euler-homogeneity for germs of free divisors that are strongly Euler-homogeneous at the origin. We have adapted this result to the case in which we do not know a priori whether there exists an Euler vector field. 

First, we give a characterization for strong Euler-homogeneity at a point $p$ in terms of the rank of the matrices $A(p)$ and $\tilde{A}(p)$:

\begin{Prop}
\label{PropRank}
    $D$ is strongly Euler-homogeneous at the point $p \in D$ if and only if $\rank \tilde{A}(p) = \rank A(p)+1$.
\end{Prop}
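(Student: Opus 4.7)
The plan is to work with the expression of a strong Euler vector field in the generating set $\mathcal{S} = \{\delta_1, \ldots, \delta_m\}$ of $\Der_f$ and translate the condition ``$\delta$ vanishes at $p$'' into a statement about left null vectors of $A(p)$ and $\tilde{A}(p)$.

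For the direct implication, I would start by taking a strong Euler vector field $\delta \in \mm_p \Der_p$ with $\delta(f)=f$. Since $\delta(f)=f \in (f)$, we have $\delta \in \Der_f$, so there exist $h_1, \ldots, h_m \in \hol$ with $\delta = \sum_i h_i \delta_i$. Computing $\delta(f)$ yields $\bigl(\sum_i h_i \alpha_i\bigr) f = f$, and since $\hol$ is a domain and $f \neq 0$ we deduce $\sum_i h_i \alpha_i = 1$. The vanishing condition $\delta \in \mm_p \Der_p$ translates into $\delta(x_j)(p) = \sum_i h_i(p) A_{ij}(p) = 0$ for all $j$, so $(h_1(p), \ldots, h_m(p))$ is a left null vector of $A(p)$. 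However, applying this same row vector to the extra column $-\overline{\alpha}(p)$ of $\tilde{A}$ gives $-\sum_i h_i(p)\alpha_i(p) = -1 \neq 0$. Hence the left kernel strictly shrinks when passing from $A(p)$ to $\tilde{A}(p)$, which forces $\rank \tilde{A}(p) > \rank A(p)$; since $\tilde{A}$ has only one column more than $A$, this yields the equality $\rank \tilde{A}(p) = \rank A(p) + 1$.

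For the converse, I would reverse this process. The rank condition means that the column $-\overline{\alpha}(p)$ is not in the column span of $A(p)$, so there exist constants $c_1, \ldots, c_m \in \C$ with $\sum_i c_i A_{ij}(p) = 0$ for all $j$ and $\sum_i c_i \alpha_i(p) \neq 0$. After rescaling, one may assume $\sum_i c_i \alpha_i(p) = 1$. Define $\delta_0 = \sum_i c_i \delta_i \in \Der_f$ and $u = \sum_i c_i \alpha_i \in \hol$. Then $\delta_0(f) = u f$, and $u(p) = 1$ makes $u$ a unit. Setting $\delta = u^{-1} \delta_0 \in \Der_f$ gives $\delta(f) = f$, while $\delta(x_j)(p) = u(p)^{-1} \sum_i c_i A_{ij}(p) = 0$ shows $\delta \in \mm_p \Der_p$. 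Thus $\delta$ is a strong Euler vector field at $p$.

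The proof is essentially a dictionary between vectors of coefficients in the generating set and row operations on $\tilde{A}$; the only delicate point is ensuring that the constant $\sum_i c_i \alpha_i(p)$ can be made nonzero, which is precisely the content of the rank jump, and that dividing by $u$ is legitimate, which follows because this nonzero value lifts $u$ to a unit of $\hol$. No serious obstacle is expected.
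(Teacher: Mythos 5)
Your proof is correct and follows essentially the same route as the paper: both translate strong Euler-homogeneity at $p$ into the existence of a left null vector of $A(p)$ that is not a left null vector of $\tilde{A}(p)$, i.e.\ into the strict inclusion of the corresponding null spaces, which is exactly the rank jump. The only cosmetic point is that the coefficients $h_i$ and the unit $u$ should be taken in $\hol_{X,p}$ (the $\delta_i$ generate the logarithmic derivations at nearby points $p$ by coherence), which the paper handles with the phrase ``in a neighbourhood of the origin''.
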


\begin{proof}
    Let us note that, as $\mathcal{S} = \{\delta_1, \ldots, \delta_m\}$ is a generating set of $\Der_f$, in a neighbourhood of the origin, $D$ is strongly Euler-homogeneous at $p$ if and only if there exists $a_1, \ldots, a_m \in \hol$ such that $\chi = \sum_{i=1}^m a_i \delta_i$ vanishes at $p$ and $\chi(f) = \gamma f$ with $\gamma(p) =\sum_{i=1}^m a_i(p) \alpha_i(p) \neq 0$. Moreover, the $\C$-vector space $\wtilde{N} = \{\bar{x} \in \C^m \mid \tilde{A}(p)^t \bar{x} = \bar{0}\}$ is always contained in the $\C$-vector space $N = \{\bar{x} \in \C^m \mid A(p)^t \bar{x} = \bar{0}\}$, with equality if and only if $\dim \wtilde{N} = m-\rank \tilde{A}(p) = m-\rank A(p) = \dim N$, that is, if and only if $\rank A(p) = \rank \tilde{A}(p)$.

    Observe that we always have $\rank A(p) \leq \rank \tilde{A}(p) \leq \rank A(p) + 1$. Let us suppose that $\rank \tilde{A}(p) \neq \rank A(p) + 1$, so that $\rank \tilde{A}(p) = \rank A(p)$ and $\wtilde{N} = N$. Now, if a vector field $\chi = \sum_{i=1}^m a_i \delta_i$ verifies $\chi(p) = 0$, then $\bar{a}(p) = (a_1(p), \ldots, a_m(p))^t$ belongs to $N = \wtilde{N}$, so $\sum_{i=1}^m a_i(p) \alpha_i(p) = 0$ and there cannot be a strong Euler vector field at $p$. 

    Reciprocally, if $\rank \tilde{A}(p) = \rank A(p) + 1$, then $\wtilde{N}$ is strictly contained in $N$. Therefore, there exists $\bar{a} = (a_1, \ldots, a_m)^t \in \C^m$ such that $\sum_{i=1}^m a_i \delta_i(p) = 0$ and $\sum_{i=1}^m a_i \alpha_i(p) \neq 0$. Taking $\chi = \sum_{i=1}^m a_i \delta_i$ we get the desired conclusion.
    
\end{proof}

As a consequence of this result, we get a criterion for strong Euler-homogeneity in terms of the analytic sets $D_i$ and $\tilde{D}_i$:

\begin{Th}[Geometric criterion for strong Euler-homogeneity]
\label{ThGeomEH}
    Let $D$ be a germ of divisor in $(\C^n,0)$ with $n \geq 2$. Then we have a decomposition of the singular locus $\Sing D = D_E \sqcup D_{\text{NE}}$, where $D_E = \bigsqcup_{i=0}^{n-2} (D_i \setminus \tilde{D}_i)$ is the set of singular points at which $D$ is strongly Euler-homogeneous and $D_{\text{NE}} = \bigsqcup_{i=0}^{n-2} (\tilde{D}_i \setminus D_{i-1})$ is the set of singular points at which $D$ is not strongly Euler-homogeneous. Then, $\tilde{D}_0 = D_0 \cap D_{\text{NE}}$ and

    \begin{itemize}
        \item[(1)] $D$ is strongly Euler-homogeneous on $D_0$ if and only if $\tilde{D}_0 = \varnothing$.
        \item[(2)] $D$ is strongly Euler-homogeneous on $D \setminus D_0$ if and only if $\tilde{D}_i = D_{i-1} \text{ for } i=1, \ldots, n-2$.
        \item[(3)] $D$ is strongly Euler-homogeneous if and only if $\tilde{D}_0 = \varnothing$ and $\tilde{D}_i = D_{i-1} \text{ for } i=1, \ldots, n-2$.

        \vspace{0,1 cm}
        
        In particular, for $n=2$, $D$ is strongly Euler-homogeneous if and only if $\tilde{D}_0 = \varnothing$.
    \end{itemize}
\end{Th}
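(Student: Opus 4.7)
The backbone is Proposition \ref{PropRank}: strong Euler-homogeneity at $p \in D$ is equivalent to the single numerical condition $\rank \tilde{A}(p) = \rank A(p)+1$. Since every point $p \in D$ lies in a unique ``layer'' $D_k \setminus D_{k-1}$ (setting $D_{-1}=\varnothing$) for some $0 \leq k \leq n-1$, where by definition $\rank A(p) = k$, I would translate the criterion as follows: for $p \in D_k \setminus D_{k-1}$, strong Euler-homogeneity at $p$ amounts to $\rank \tilde{A}(p) = k+1$, equivalently $p \notin \tilde{D}_k$. So the plan is to combine this point-by-point criterion with the inclusions $\tilde{D}_i \subset D_i \subset \tilde{D}_{i+1}$ (Proposition \ref{PropDIrel}(2)) to rewrite everything in terms of equalities and emptiness of the $\tilde{D}_i$.

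For (1), I would restrict attention to the layer $k=0$: a point $p \in D_0$ has $\rank A(p)=0$, and strong Euler-homogeneity means $\rank \tilde{A}(p)=1$, i.e.\ $p \notin \tilde{D}_0$. Since $\tilde{D}_0 \subset D_0$ by Proposition \ref{PropDIrel}(2), this directly yields both assertions: $D$ is strongly Euler-homogeneous on $D_0 \setminus \tilde{D}_0$ and fails strong Euler-homogeneity on $\tilde{D}_0$, and strong Euler-homogeneity holds on all of $D_0$ iff $\tilde{D}_0 = \varnothing$.

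For (2), I handle each layer $k=1, \ldots, n-1$ separately. For $p \in D_k \setminus D_{k-1}$, the criterion becomes $p \notin \tilde{D}_k$, so strong Euler-homogeneity on this layer is equivalent to $(D_k \setminus D_{k-1}) \cap \tilde{D}_k = \varnothing$. Using $\tilde{D}_k \subset D_k$ (valid for $k \leq n-2$), this collapses to $\tilde{D}_k \subset D_{k-1}$; combined with the reverse inclusion $D_{k-1} \subset \tilde{D}_k$ from Proposition \ref{PropDIrel}(2), this is exactly $\tilde{D}_k = D_{k-1}$. The layer $k = n-1$ is automatic: here $\tilde{D}_{n-1} = D_{n-2} = \Sing D$ by Proposition \ref{PropDIrel}(3), so $p \in D \setminus D_{n-2}$ already forces $p \notin \tilde{D}_{n-1}$. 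Taking the conjunction over $k = 1, \ldots, n-2$ gives (2).

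Finally (3) is the conjunction of (1) and (2), and for $n=2$ the range $i=1, \ldots, n-2$ is empty, leaving only $\tilde{D}_0 = \varnothing$. The bookkeeping of which layer $k$ contributes a nontrivial condition and which is automatic (via Proposition \ref{PropDIrel}(3)) is the most delicate step, but there is no real obstacle beyond keeping the indices straight; the proof is essentially a direct, layer-by-layer application of Proposition \ref{PropRank} combined with the inclusions of Proposition \ref{PropDIrel}.
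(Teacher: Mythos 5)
Your proposal is correct and follows essentially the same route as the paper: the pointwise criterion of Proposition \ref{PropRank} (for $p$ with $\rank A(p)=k$, strong Euler-homogeneity means exactly $p \notin \tilde{D}_k$), combined with the inclusions $D_{i-1} \subset \tilde{D}_i \subset D_i$ of Proposition \ref{PropDIrel} and the identity $\tilde{D}_{n-1}=D_{n-2}$ to dismiss the top stratum. The only difference is organizational --- you run a uniform layer-by-layer equivalence over the strata $D_k \setminus D_{k-1}$, whereas the paper proves the two implications of (2) separately (with a case split on $p \in D_0$ and a contrapositive locating a bad index $i_0$) --- and your index bookkeeping is accurate throughout.
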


\begin{proof}
    The inequality $\rank \tilde{A}(p) \leq \rank A(p) + 1$ shows that $D_i \setminus \tilde{D}_i$ is the set of points at which $\rank \tilde{A}(p) = \rank A(p) + 1 = i+1$. Similarly, the inequality $\rank A(p) \leq \rank \tilde{A}(p)$ shows that $\tilde{D}_i \setminus D_{i-1}$ is the set of points at which $\rank A(p) = \rank \tilde{A}(p) = i$. The first assertion then follows from the equality $\Sing D = D_{n-2}$ and Proposition \ref{PropRank}.

    The inclusions $D_0 \setminus \tilde{D}_0 \subset D_E$ and $\tilde{D}_0 \subset D_{\text{NE}}$ imply that $\tilde{D}_0 = D_0 \cap D_{\text{NE}}$, from which $(1)$ follows immediately. Now, the condition ``$D$ is strongly Euler-homogeneous on $D \setminus D_0$'' can be rewritten as $D_{\text{NE}} \subset D_0$, which is equivalent to $\tilde{D}_0 = D_{\text{NE}} = \tilde{D}_0 \sqcup \bigsqcup_{i=1}^{n-2} (\tilde{D}_i \setminus D_{i-1})$. Since $D_{i-1} \subset \tilde{D}_i$ for all $i$, this happens if and only if $\tilde{D}_i = D_{i-1}$ for all $i=1, \ldots, n-2$ and we get $(2)$. Finally, $(3)$ is straightforward from $(1)$ and $(2)$.
   
\end{proof}

\begin{Rmk}
\label{RmkProductInduction}
    One might ask why we should care about strong Euler-homogeneity only on $D_0$ or $D \setminus D_0$. Here is one reason:

Suppose that we want to prove that a divisor satisfying a certain property $P$ is always strongly Euler-homogeneous. Assume we know how to prove it for plane curves and that the property $P$ is preserved under smooth factors (as it happens with strong Euler-homogeneity). Then, we can apply an inductive argument: suppose the result is true for ambient dimension $k-1$ and consider a divisor $D$ in ambient dimension $k$ satisfying $P$. As $D$ is a product with a smooth factor at every point outside $D_0$, by induction hypothesis, we deduce that $D$ is strongly Euler-homogeneous on $D \setminus D_0$. Thus, we will only have to show that $P$ implies strong Euler-homogeneity on $D_0$ (or $\tilde{D}_0 = \varnothing$ by Theorem \ref{ThGeomEH}). And any equivalent or necessary condition for strong Euler-homogeneity outside $D_0$ could be useful for this purpose.

This strategy will be used in the proof of Theorems \ref{ThSHandSEH} and
\ref{ThConjDim4}.
\end{Rmk}

Hilbert's Nullstellensatz gives us an immediate algebraic translation of the developed criterion: 

\begin{Cor}[Algebraic criterion for strong Euler-homogeneity]
\label{CorAlgEH}
    Let $D$ be a germ of divisor in $(\C^n,0)$ with $n \geq 2$. Then:

    \begin{itemize}
        \item[(1)] $D$ is strongly Euler-homogeneous on $D_0$ if and only if $\tilde{I}_1 = \hol$.
        \item[(2)] $D$ is strongly Euler-homogeneous on $D \setminus D_0$ if and only if $\sqrt{\tilde{I}_{i+1}} = \sqrt{I_i}$ \text{ for } $i=1, \ldots, n-2$.
        \item[(3)] $D$ is strongly Euler-homogeneous if and only if $\tilde{I}_1 = \hol$ and $\sqrt{\tilde{I}_{i+1}} = \sqrt{I_i}$ \text{ for } $i=1, \ldots, n-2$.
    \end{itemize}
\end{Cor}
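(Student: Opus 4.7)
The plan is to derive the three statements directly from the geometric criterion of Theorem \ref{ThGeomEH}, by translating each set-theoretic condition on the analytic germs $D_i$ and $\tilde{D}_i$ into an ideal-theoretic condition on their defining Fitting ideals $I_i$ and $\tilde{I}_{i+1}$. Since everything takes place in the germ $(\C^n,0)$, the only tool required is the analytic Nullstellensatz (Rückert) for the local ring $\hol$, which asserts that for any ideal $J \subset \hol$ one has $I(V(J)) = \sqrt{J}$.

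For part (1), I would observe that $\tilde{D}_0 = V(\tilde{I}_1)$ is empty as a germ at the origin if and only if $0 \notin \tilde{D}_0$ for some representative, equivalently $\tilde{I}_1 \not\subset \mm$. Since $\hol$ is local with maximal ideal $\mm$, this is in turn equivalent to $\tilde{I}_1 = \hol$. Combined with Theorem \ref{ThGeomEH}(1), this gives the stated characterization.

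For part (2), the identities $D_{i-1} = V(I_i)$ and $\tilde{D}_i = V(\tilde{I}_{i+1})$, together with the fact that for ideals $J_1, J_2 \subset \hol$ the equality $V(J_1) = V(J_2)$ (as germs at the origin) is equivalent to $\sqrt{J_1} = \sqrt{J_2}$, translate the conditions $\tilde{D}_i = D_{i-1}$ for $i = 1, \ldots, n-2$ into $\sqrt{\tilde{I}_{i+1}} = \sqrt{I_i}$ for the same range of $i$. Part (3) then follows immediately by combining (1) and (2), mirroring the structure of Theorem \ref{ThGeomEH}(3).

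Since Theorem \ref{ThGeomEH} has already been established, I expect no real obstacle; the whole content of the corollary is a dictionary between the geometric and algebraic sides, and the only care required is to remain within the germ category so that Rückert's Nullstellensatz applies in its sharp ideal-theoretic form. The slight asymmetry between (1), where one gets an honest equality of ideals, and (2), where only the radicals agree, simply reflects the fact that empty germs correspond to non-proper ideals while the Nullstellensatz recovers ideals only up to radical.
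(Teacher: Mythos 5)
Your proposal is correct and coincides with the paper's own argument: the paper also obtains the corollary as an immediate translation of Theorem \ref{ThGeomEH} via the analytic Nullstellensatz, with the emptiness of $\tilde{D}_0$ as a germ corresponding to $\tilde{I}_1 \not\subset \mm$, i.e.\ $\tilde{I}_1 = \hol$, and the equalities of germs $\tilde{D}_i = D_{i-1}$ corresponding to equalities of radicals. Your extra remarks on the equality-versus-radical asymmetry are accurate but add nothing beyond the paper's reasoning.
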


\begin{Rmk}
\label{RmkSEHFitting}

We have formulated this criterion in terms of ideals of minors of $A$ and $\tilde{A}$, which is more practical for our purpose. In terms of Fitting ideals, this criterion reads as:

\begin{itemize}
    \item $D$ is strongly Euler-homogeneous on $D_0$ if and only if $\sqrt{\Fitt_n(\mathcal{J}_f)} = \sqrt{\Fitt_n \left(\mathcal{J}_f/(f)\right)} \hspace{0,2 cm} (= \hol)$.

    \item $D$ is strongly Euler-homogeneous on $D \setminus D_0$ if and only if $\sqrt{\Fitt_i(\mathcal{J}_f)} = \sqrt{\Fitt_i \left(\mathcal{J}_f/(f)\right)}$ for all $i=2, \ldots, n-1$.

    \item $D$ is strongly Euler-homogeneous if and only if $\sqrt{\Fitt_i(\mathcal{J}_f)} = \sqrt{\Fitt_i \left(\mathcal{J}_f/(f)\right)}$ for all $i=2, \ldots, n$.
\end{itemize}
\end{Rmk}

We present now two examples in which we apply the developed criterion to determine if they are strongly Euler-homogeneous or not. Both of them are strongly Euler-homogeneous at the origin but the first one is also strongly Euler-homogeneous outside the origin and the second one is not. Calculations have been made with \textsc{Macaulay2} \cite{M2}:

\begin{Ex}
\label{ExSEH}

Let $D = V(f)$ in $(\C^4,0)$ with $f=xy(x+z)(x^2+yz)(z+yt)$. A minimal generating set of $\Der_f$ is:

\vspace{-0,2 cm}

$$\begin{array}{l}

    \delta_1 = x \partial_x + y \partial_y + z \partial_z, \\
    \delta_2 = (z+yt) \partial_t, \\
    \delta_3 = y(x+z) \partial_y -z(x+z) \partial_z -2t(x+z) \partial_t, \\
    \delta_4 = y^2(y+z) \partial_y + y(x^2-z^2) \partial_z + (-x^2+yz+2z^2) \partial_t, \\
    \delta_5 = yz(y+z) \partial_y + z(x^2-z^2) \partial_z + (x^2t-2z^2t+z^2) \partial_t. \\

\end{array}$$

As $\delta_1(f) = 6f$, $\delta_2(f)=yf$, $\delta_3(f) = -zf$, $\delta_4(f)=(xy+3y^2+yz)f$ and $\delta_5(f) = (x^2+xz+3yz-z^2)f$, the extended Saito matrix is:

$$\tilde{A} = \left( \begin{array}{cccc|c}
    x & y & z & 0 & -6 \\
    0 & 0 & 0 & z+yt & -y \\
    0 & y(x+z) & -z(x+z) & -2t(x+z) & z \\
    0 & y^2(y+z) & y(x^2-z^2) & -x^2+yz+2z^2 & -xy-3y^2-yz \\
    0 & yz(y+z) & z(x^2-z^2) & x^2t-2z^2t+z^2 & -x^2-xz-3yz+z^2\\
\end{array} \right).$$

By Corollary \ref{CorAlgEH}, $D$ is strongly Euler-homogeneous if and only if $\tilde{I}_1 = \hol$ (this holds because the last column of $\tilde{A}$ contains a unit) and $\sqrt{\tilde{I}_{i+1}} = \sqrt{I_i}$ for $i=1,2$. We have: 

$$\begin{array}{l}

    \sqrt{\tilde{I}_2} = \sqrt{I_1} = (x,y,z), \\[0,2 cm]

    \sqrt{\tilde{I}_3} = \sqrt{I_2} = (x, z, yt). \\[0,2 cm]
\end{array}$$

We conclude that $D$ is strongly Euler-homogeneous.
\end{Ex}

\begin{Ex}
\label{ExNotSEH}
Let $D = V(f)$ in $(\C^4,0)$ with $f=xy(x+z)(x^2+yz)(z^2+yt)$ (note that the only difference with the previous example is that the $z$ in the last term of the product is squared). A minimal generating set of $\Der_f$ is:

\vspace{-0,2 cm}

$$\begin{array}{l}

    \delta_1 = x \partial_x + y \partial_y + z \partial_z + t \partial_t, \\
    \delta_2 = (z^2+yt) \partial_t, \\
    \delta_3 = y(x+z) \partial_y -z(x+z) \partial_z -3t(x+z) \partial_t, \\
    \delta_4 = y^2(y+z) \partial_y + y(x^2-z^2) \partial_z - (2x^2z+y^2t+3yzt) \partial_t, \\
    \delta_5 = yz(y+z) \partial_y + z(x^2-z^2) \partial_z + (2x^2t-yzt+3yt^2) \partial_t. \\

\end{array}$$

We have $\delta_1(f) = 7f$, $\delta_2(f)=yf$, $\delta_3(f) = -(x+2z)f$, $\delta_4(f)=(xy+2y^2-2yz)f$ and $\delta_5(f) = (2x^2+xz+2yz-2z^2+3yt)f$. The extended Saito matrix is then:

\begin{equation*}
\resizebox{.99\hsize}{!}{
    $\tilde{A} = \left( \begin{array}{cccc|c}
    x & y & z & t & -7 \\
    0 & 0 & 0 & z^2+yt & -y \\
    0 & y(x+z) & -z(x+z) & -3t(x+z) & x+2z \\
    0 & y^2(y+z) & y(x^2-z^2) & -(2x^2z+y^2t+3yzt) & -xy-2y^2+2yz \\
    0 & yz(y+z) & z(x^2-z^2) & 2x^2t-yzt+3yt^2 & -2x^2-xz-2yz+2z^2-3yt\\
\end{array} \right)$.}\end{equation*}

As before, $D$ is strongly Euler-homogeneous if and only if $\tilde{I}_1 = \hol$ (which holds because of the $-7$ in the last column) and $\sqrt{\tilde{I}_{i+1}} = \sqrt{I_i}$ for $i=1,2$. But we have: 

$$\begin{array}{l}
    \sqrt{\tilde{I}_2} = (x,y,z) \subsetneq \sqrt{I_1} = (x,y,z,t), \\[0,2 cm]

    \sqrt{\tilde{I}_3} = \sqrt{I_2} = (t(y+z), t(x+z), z(z-t), z(y+t), z(x+t), xy - zt). \\[0,2 cm]
\end{array}$$

This means that $D_0 \subsetneq \tilde{D}_1$ and $D$ is not strongly Euler-homogeneous at any point of $\tilde{D}_1 \setminus D_0 = \{(0,0,0,t) \mid t \neq 0\}$, where both matrices $A$ and $\tilde{A}$ have rank $1$. \\
\end{Ex}

The following lemma states that the vanishing locus of $\partial f : f$ is always inside the singular locus. This will give us a necessary condition for a divisor to be strongly Euler-homogeneous outside $D_0$. 

\begin{Lem}
\label{LemAlphaSubsetD}
    $V(\partial f : f) \subset \Sing D$.
\end{Lem}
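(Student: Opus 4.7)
The plan is to prove the chain of inclusions $V(\partial f : f) \subseteq V(\partial_1 f, \ldots, \partial_n f) \subseteq V(f, \partial_1 f, \ldots, \partial_n f) = \Sing D$ as germs at $0$. The first inclusion is purely algebraic: for each $j$, the vector field $f \partial_j$ is logarithmic for $f$ (trivially, since $(f\partial_j)(f) = \partial_j f \cdot f \in (f)$), and its associated factor is precisely $\partial_j f$. Equivalently, $\partial_j f \cdot f \in (\partial_1 f, \ldots, \partial_n f)$, so $\partial_j f \in \partial f : f$. Hence $(\partial_1 f, \ldots, \partial_n f) \subseteq \partial f : f$, and taking vanishing loci reverses the inclusion.

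The second inclusion would follow from the stronger statement that $V(\partial_1 f, \ldots, \partial_n f) \subseteq V(f)$ as germs at $0$. I would prove this by choosing a sufficiently small representative in which every irreducible component of the vanishing locus of $\partial_1 f, \ldots, \partial_n f$ passes through $0$. On such a component $C$, every partial derivative of $f$ vanishes identically, so the ambient differential $df$ restricts to zero on $C$; in particular, the differential of the holomorphic function $f|_C$ vanishes at every smooth point of $C$. By the standard fact that an irreducible complex analytic set is connected with dense connected smooth locus, $f|_C$ is constant on $C$; and since $0 \in C$ with $f(0) = 0$, this constant equals $0$, so $C \subseteq V(f)$.

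Combining the two inclusions, $V(\partial f : f) \subseteq V(\partial_1 f, \ldots, \partial_n f) \cap V(f) = V(f, \partial_1 f, \ldots, \partial_n f) = \Sing D$, which is the claim. The main obstacle is the second inclusion: one must carefully verify that an irreducible analytic germ $C$ at $0$ on which $df$ vanishes forces $f|_C$ to be constant (a standard but non-trivial fact about holomorphic functions on analytic sets), and also choose the germ representative so that only components passing through $0$ contribute. By contrast, the first inclusion and the final combination step are essentially one-line observations.
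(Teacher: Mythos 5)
Your proof is correct, but it follows a genuinely different route from the paper's. The paper first invokes Scheja's theorem that $f$ is integral over $(\partial_1 f,\ldots,\partial_n f)$ to produce a logarithmic derivation $\eta$ with $\eta(f)=f^r$; writing $\eta=\sum c_i\delta_i$ gives $f^{r-1}\in(\alpha_1,\ldots,\alpha_m)=\partial f:f$, hence $V(\partial f:f)\subset D$, and then it upgrades from $D$ to $\Sing D$ via the rank machinery of Section 2: at a point $p$ where all $\alpha_i$ vanish the last column of $\tilde A$ is zero, so $\rank\tilde A(p)=\rank A(p)\leq n-1$ (using $D=D_{n-1}$), whence $p\in\tilde D_{n-1}=\Sing D$ by Proposition \ref{PropDIrel}. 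You instead observe the elementary inclusion $(\partial_1 f,\ldots,\partial_n f)\subset \partial f:f$ (each $f\partial_j$ is logarithmic with factor $\partial_j f$), so $V(\partial f:f)\subset V(\partial_1 f,\ldots,\partial_n f)$, and then show that the germ at $0$ of the critical locus lies in $V(f)$: on each irreducible component, $df$ vanishes, the smooth locus is dense and connected, so $f$ is constant there and equal to $f(0)=0$; combining with $\Sing D=V(f,\partial_1 f,\ldots,\partial_n f)$ (valid since $f$ is reduced) gives the claim. Your approach avoids both Scheja's integral-dependence result and the Fitting/rank apparatus, at the price of invoking the standard complex-analytic fact that a holomorphic function is constant on each irreducible component of its critical set (this is where the analytic structure is essential; the analogous statement fails for merely connected critical sets of real $C^1$ functions). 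The paper's argument, by contrast, stays entirely within the algebraic framework it has already set up ($\tilde D_{n-1}=\Sing D$ and the colon-ideal description of the $\alpha_i$), which is why it is phrased the way it is.
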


\begin{proof}
    Recall that, with the previous notation, $\partial f : f = (\alpha_1, \ldots, \alpha_m)$. As $f$ is integral over the ideal generated by its partial derivatives \cite[Satz 5.2]{Scheja70}, there exists a logarithmic derivation $\eta$ and an integer $r \geq 1$ such that $\eta(f) = f^r$. Let us write $\eta = \sum_{i=1}^m c_i \delta_i$, so that we get $f^{r-1} = \sum_{i=1}^m c_i \alpha_i$ when we apply $\eta$ to $f$. If $r=1$, then $V(\alpha_1, \ldots, \alpha_m) = \varnothing$ and there is nothing to prove. Otherwise, it is clear that $V(\alpha_1, \ldots, \alpha_m) \subset D$. As $D = D_{n-1}$, if $p \in V(\alpha_1, \ldots, \alpha_m)$, then $\rank \tilde{A}(p) = \rank A(p) \leq n-1$, so $p \in \tilde{D}_{n-1} = \Sing D$.
    
\end{proof}

\begin{Cor}[Necessary condition for strong Euler-homogeneity outside $D_0$]
\label{CorAlphaDtilde0}
    If $D = V(f)$ is a germ of divisor in $(\C^n,0)$ that is strongly Euler-homogeneous on $D \setminus D_0$, then $\tilde{D}_0 = V(\partial f : f)$ and $\sqrt{\tilde{I}_1} = \sqrt{\partial f : f}$.
\end{Cor}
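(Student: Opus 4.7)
The plan is to prove the two equalities in sequence, deriving the second one from the first via Rückert's Nullstellensatz.

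First, I would note the easy inclusion $\tilde{D}_0 \subset V(\partial f : f)$. Indeed, the entries $-\alpha_1, \ldots, -\alpha_m$ of the last column of $\tilde{A}$ belong to $\tilde{I}_1$ (which is generated by all entries of $\tilde{A}$), so $(\alpha_1, \ldots, \alpha_m) \subset \tilde{I}_1$ and taking $V$ reverses the inclusion.

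For the reverse inclusion $V(\partial f : f) \subset \tilde{D}_0$, I would take $p \in V(\partial f : f)$. Since all $\alpha_i(p) = 0$, the last column of $\tilde{A}$ vanishes at $p$, hence $\rank \tilde{A}(p) = \rank A(p)$. By Lemma \ref{LemAlphaSubsetD}, $p \in \Sing D \subset D = D_{n-1}$, so there are two cases. If $p \in D_0$, then $A(p) = 0$, so $\tilde{A}(p) = 0$ and $p \in \tilde{D}_0$. Otherwise, $p \in D \setminus D_0$; the hypothesis that $D$ is strongly Euler-homogeneous on $D \setminus D_0$ combined with Proposition \ref{PropRank} yields $\rank \tilde{A}(p) = \rank A(p) + 1$, which contradicts $\rank \tilde{A}(p) = \rank A(p)$. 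Hence only the first case occurs, giving $p \in \tilde{D}_0$ and proving $\tilde{D}_0 = V(\partial f : f)$.

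For the second equality, I would apply Rückert's Nullstellensatz in the analytic local ring $\hol$: for any ideal $J$ one has $\sqrt{J} = I(V(J))$. Applied to $\tilde{I}_1$ and to $\partial f : f$, and using the equality of vanishing loci just proved, this immediately yields
$$\sqrt{\tilde{I}_1} = I(\tilde{D}_0) = I(V(\partial f : f)) = \sqrt{\partial f : f}.$$

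I do not foresee a genuine obstacle: the argument is essentially a direct combination of Proposition \ref{PropRank} (which controls what happens precisely when the rank of $\tilde{A}$ equals that of $A$) and Lemma \ref{LemAlphaSubsetD} (which confines $V(\partial f : f)$ to the singular locus). The only subtle point is the dichotomy on whether $p$ lies in $D_0$ or not, but the hypothesis of the corollary is tailored exactly to handle the case $p \in D \setminus D_0$.
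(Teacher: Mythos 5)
Your proof is correct and follows essentially the same route as the paper: the easy inclusion from $(\alpha_1,\ldots,\alpha_m)\subset\tilde{I}_1$, then for $p\in V(\partial f:f)$ the vanishing of the last column gives $\rank \tilde{A}(p)=\rank A(p)$, Lemma \ref{LemAlphaSubsetD} places $p$ in $\Sing D\subset D$, strong Euler-homogeneity outside $D_0$ forces $p\in D_0$ (hence $\tilde{A}(p)=0$ and $p\in\tilde{D}_0$), and the analytic Nullstellensatz yields the statement on radicals. The only cosmetic difference is that you invoke Proposition \ref{PropRank} directly at $p$, whereas the paper phrases the same contradiction through Theorem \ref{ThGeomEH}.
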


\begin{proof}
      It is clear that $\tilde{D}_0 \subset V(\alpha_1, \ldots, \alpha_m) = V(\partial f : f)$. For the converse, consider a point $p \in V(\alpha_1, \ldots, \alpha_m)$. In particular, by Lemma \ref{LemAlphaSubsetD}, $p \in \Sing D \subset D$. As the last column of $\tilde{A}$ vanishes at $p$, we have that $\tilde{A}(p)$ and $A(p)$ have the same rank. Thus, by Proposition \ref{PropRank}, $D$ is not strongly Euler-homogeneous at $p$. But then $p$ must belong to $D_0$ by hypothesis, so $0 = \rank A(p) = \rank \tilde{A}(p)$ and $p \in \tilde{D}_0$. The equality $\sqrt{\tilde{I}_1} = \sqrt{\partial f : f}$ follows from the first one by Hilbert's Nullstellensatz.
\end{proof}

This result tells us that, when $D$ is strongly Euler-homogeneous outside $D_0$, the radical of $\partial f : f$ does not depend on the choice of $f$. The following example shows that, if strong Euler-homogeneity outside $D_0$ is not assumed, this may no longer be true:

\begin{Ex}
    Let $D=V(f)$ in $(\C^3,0)$ with $f = xyz(x^3+xyz+y^3z^3)$. This divisor is not strongly Euler-homogeneous outside $D_0$, as $\sqrt{I_1} = (x,y,z) \neq (x,yz) = \sqrt{\tilde{I_2}}$. And, if we set $g=(1+y)f$, we have $\sqrt{\partial g : g} = (x,y) \neq (x, yz) = \sqrt{\partial f : f}$. \\

\end{Ex}

\section{Saito-holonomic type properties}
\label{SectionSaitoHolonomic}

Let us begin this section by recalling the different notions of Koszul-freeness defined in \cite[Definitions 1.10 and 1.13]{NarvDual} and \cite[Remark 7.3]{SymmetryGS}. We consider the graded ring $\gr \D_{X,p}$ associated with the order filtration. For a differential operator $P \in \D_{X,p}$, we will denote by $\sigma(P)$ the principal symbol of $P$ in $\gr \D_{X,p}$. It is well-known that $\gr \D_{X,p}$ can be identified with the commutative ring $\hol_{X,p}[\underline{\xi}]$, where $\underline{\xi} = (\xi_1, \ldots, \xi_n)$ and $\xi_i = \sigma(\partial_i)$.

\begin{Def}
\label{DefKoszul}
    Let $D \subset X$ be a free divisor. We say $D$ is:

    \begin{itemize}
        \item \emph{weakly Koszul-free} at $p \in D$ if, for some (or any) basis $\{\delta_1, \ldots, \delta_n\}$ of $\Der_{X,p}(-\log D)$ and some (or any) reduced local equation $f$ of $D$ at $p$, the sequence

        $$ \sigma(\delta_1)-\alpha_1 s, \ldots, \sigma(\delta_n)-\alpha_n s, \text{ with } \alpha_i \in \hol_{X,p} \text{ being such that } \delta_i(f) = \alpha_i f,$$

        \vspace{0,2 cm}

        \noindent is regular in $\gr \D_{X,p}[s]$.

        \item \emph{Koszul-free} at $p \in D$ if, for some (or any) basis $\{\delta_1, \ldots, \delta_n\}$ of $\Der_{X,p}(-\log D)$, the sequence 
        
        $$\sigma(\delta_1), \ldots, \sigma(\delta_n)$$
        
        is regular in $\gr \D_{X,p}$.

        \item \emph{strongly Koszul-free} at $p \in D$ if, for some (or any) basis $\{\delta_1, \ldots, \delta_n\}$ of $\Der_{X,p}(-\log D)$ and some (or any) reduced local equation $f$ of $D$ at $p$, the sequence

        $$ f, \sigma(\delta_1)-\alpha_1 s, \ldots, \sigma(\delta_n)-\alpha_n s, \text{ with } \alpha_i \in \hol_{X,p} \text{ being such that } \delta_i(f) = \alpha_i f, $$

        \vspace{0,2 cm}

        \noindent is regular in $\gr \D_{X,p}[s]$.
        
    \end{itemize}

    We say $D$ is \emph{weakly Koszul-free}, \emph{Koszul-free} or \emph{strongly Koszul-free} if it is so at each $p \in D$.
\end{Def}

Our purpose is to define and study analogous properties in the general case in which our divisor is not necessarily free. In order to do this, we need first to make some algebraic considerations:

In a Cohen-Macaulay local ring, a sequence of $r$ elements is regular if and only if the height of the ideal they generate is $r$ \cite[Theorem 17.4]{MatsumuraGris}. The ring $\hol_{X,p}[\underline{\xi}] = \C\{\underline{x}\}[\underline{\xi}]$ (resp. $\hol_{X,p}[\underline{\xi},s]$) is not local but we will see that, for homogeneous elements with respect to $\underline{\xi}$ (resp. with respect to $\underline{\xi}, s$), this is still true. In order to do so, we will study the extension $\C\{\underline{x}\}[\underline{\xi}] \subset \C\{\underline{x},\underline{\xi}\}$. 

If a ring map $A \to B$ is faithfully flat, then a sequence of elements is regular in $A$ if and only if the corresponding sequence is regular in $B$. Moreover, if $A$ and $B$ are Noetherian, then the height of an ideal $I \subset A$ (denoted by $\htt I$) coincides with the height of its extension $B \cdot I$ in $B$ \cite[(4.C), (5.D) and (13.B.3)]{MatsumuraRosa}. The extension $\C\{\underline{x}\}[\underline{\xi}] \subset \C\{\underline{x},\underline{\xi}\}$ is not faithfully flat but, for homogeneous elements and ideals, these statements still hold. For the sake of completeness, we include the proof here: 

\begin{Prop}
\label{PropHeight}

    Let $a_1, \ldots, a_r \in \C\{\underline{x}\}[\underline{\xi}]$ be homogeneous elements with respect to $\underline{\xi}$ and consider the ring extension $\C\{\underline{x}\}[\underline{\xi}] \subset \C\{\underline{x},\underline{\xi}\}$. Then:

    \begin{itemize}
        \item[(1)] $a_1, \ldots, a_r$ is a regular sequence in $\C\{\underline{x}\}[\underline{\xi}]$ if and only if it is so in $\C\{\underline{x},\underline{\xi}\}$.
        \item[(2)] If $I = (a_1, \ldots, a_r)$, then $\htt I = \htt I^e$, where $I^e$ denotes the extended ideal.
    \end{itemize}

    Consequently, $a_1, \ldots, a_r$ is a regular sequence in $\C\{\underline{x}\}[\underline{\xi}]$ if and only if $\htt(a_1, \ldots, a_r) = r$.
\end{Prop}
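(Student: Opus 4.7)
The approach is to reduce all statements about the graded ring $R := \C\{\underline{x}\}[\underline{\xi}]$ to the local ring $R_{\mathfrak{m}_R}$, where $\mathfrak{m}_R = (\underline{x}, \underline{\xi})$, and then to transfer them to $S := \C\{\underline{x}, \underline{\xi}\}$ via the common $\mathfrak{m}$-adic completion $\widehat{R_{\mathfrak{m}_R}} = \widehat{S} = \C[[\underline{x}, \underline{\xi}]]$. The key structural fact I will use is that $R$ is naturally $\mathbb{Z}_{\geq 0}$-graded by the $\underline{\xi}$-degree, with degree-zero part $R_0 = \C\{\underline{x}\}$ a local ring.

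For statement (2), I will first note that any homogeneous ideal $I$ of $R$ is contained in $\mathfrak{m}_R$ (since it cannot contain a unit of $R_0$), and that the minimal primes over $I$ are themselves homogeneous (a standard fact in graded commutative algebra), hence also contained in $\mathfrak{m}_R$. This yields $\htt_R(I) = \htt_{R_{\mathfrak{m}_R}}(I R_{\mathfrak{m}_R})$. Combined with the fact that the faithfully flat completion maps $R_{\mathfrak{m}_R} \hookrightarrow \C[[\underline{x}, \underline{\xi}]]$ and $S \hookrightarrow \C[[\underline{x}, \underline{\xi}]]$ preserve ideal heights (by the results of Matsumura already cited in the text), this gives $\htt_R(I) = \htt_{R_{\mathfrak{m}_R}}(I R_{\mathfrak{m}_R}) = \htt_{\C[[\underline{x}, \underline{\xi}]]}(I \C[[\underline{x}, \underline{\xi}]]) = \htt_S(I S)$.

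For statement (1), the implication ``regular in $R$ implies regular in $R_{\mathfrak{m}_R}$'' follows from flatness of localization; the converse is the delicate step. Assuming $a_1, \ldots, a_r$ is regular in $R_{\mathfrak{m}_R}$, let $b \in R$ satisfy $a_{i+1} b \in (a_1, \ldots, a_i)$ in $R$. Decomposing $b$ into its homogeneous components and using that each $a_j$ is homogeneous, one reduces to the case where $b$ itself is homogeneous. Localizing gives $u b = \sum_j d_j a_j$ in $R$ for some $u \notin \mathfrak{m}_R$ and $d_j \in R$; taking the homogeneous component of degree $\deg b$ on both sides produces $u_0 b = \sum_j (d_j)_{\deg b - \deg a_j} a_j$, where $u_0 \in R_0$ is the degree-zero part of $u$. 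Since $u \notin \mathfrak{m}_R$ forces $u_0 \notin \mathfrak{m}_{R_0}$, the element $u_0$ is a unit in $\C\{\underline{x}\}$ and hence in $R$, so $b \in (a_1, \ldots, a_i)$. Once regularity has been reduced to $R_{\mathfrak{m}_R}$, the same faithful flatness argument transfers it between $R_{\mathfrak{m}_R}$ and $S$ in both directions via the common completion.

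The ``Consequently'' part then follows from the fact that $S$ is a regular local ring, hence Cohen-Macaulay, so by Matsumura's Theorem 17.4 a sequence in $S$ is regular iff it generates an ideal of height equal to its length; combined with (1) and (2) this gives the desired equivalence in $R$. The main technical obstacle I expect is the homogeneous-descent step in statement (1): extracting a well-defined unit $u_0 \in R_0$ from the relation $u b = \sum d_j a_j$ requires careful bookkeeping of homogeneous parts, but once the grading is invoked the argument reduces to a routine comparison of homogeneous components.
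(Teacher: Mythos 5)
Your proof is correct and takes essentially the same route as the paper: both reduce to $\C[[\underline{x},\underline{\xi}]]$ via faithfully flat maps and use the $\underline{\xi}$-grading (comparison of homogeneous components for the regular-sequence descent, homogeneity of minimal primes for the height comparison) to handle the non-flat passage from $\C\{\underline{x}\}[\underline{\xi}]$ to an intermediate local ring. The only cosmetic difference is that you localize at the maximal ideal $(\underline{x},\underline{\xi})$ and complete, whereas the paper localizes at the multiplicative set $1+(\underline{x},\underline{\xi})$ before completing; the substance of the descent arguments is the same.
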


\begin{proof}
    Let $A = \C\{\underline{x}\}[\underline{\xi}]$. As the extension $\C\{\underline{x},\underline{\xi}\} \subset \C[[\underline{x},\underline{\xi}]]$ is faithfully flat, we just need to prove the statements for the extension $\C\{\underline{x}\}[\underline{\xi}] \subset \C[[\underline{x},\underline{\xi}]]$. Let $M=(\underline{x},\underline{\xi})$ be the maximal homogeneous ideal of $A$, so that the $M$-adic completion of $A$ is $\hat{A} = \C[[\underline{x},\underline{\xi}]]$, and let $S = 1+M$. By \cite[(24.A) and (24.B)]{MatsumuraRosa}, the canonical map $A \to \hat{A}$ factors through $S^{-1}A$ and the extension $S^{-1}A \to \hat{A}$ is faithfully flat. Thus, it remains to show the results for the extension $A \to S^{-1} A$:

    It is easy to see that, if $J$ is a homogeneous ideal, $a \in A$, $s \in S$ and $as \in J$, then $a \in J$. From this, we obtain that $(S^{-1} J)^c = J$, where the superindex $c$ means the contraction of the ideal. This allows us to deduce that $a_1, \ldots, a_r$ is a regular sequence in $\C\{\underline{x}\}[\underline{\xi}]$ if and only if so is $a_1/1, \ldots, a_r/1$ in $S^{-1}A$, which gives $(1)$. 

    Let us prove $(2)$. Consider a homogeneous (with respect to $\underline{\xi}$) prime ideal $\mathfrak{p}$. Note that $S$ cannot meet $\mathfrak{p}$ because the $0$-th degree part of an element in the intersection would be a unit belonging to $\mathfrak{p}$. Consequently, we have $\htt \mathfrak{p} = \htt (S^{-1} \mathfrak{p})$ \cite[AC VIII.9, Corollaire after Proposition 7]{Bourbaki89}. As minimal primes over homogeneous ideals are homogeneous \cite[AC VIII.64, Lemme 1.c)]{Bourbaki89}, we deduce that $\htt I = \htt (S^{-1}I)$, which finishes the proof.

    The last statement follows from the previous ones and the fact that $\C\{\underline{x},\underline{\xi}\}$ is a Cohen-Macaulay local ring.
    
\end{proof}

Now, we are ready to generalize Koszul type properties:

\begin{Def}
\label{DefSaitoHolonomic}
    Let $D \subset X$ be a divisor. We say:

    \begin{itemize}
        \item $D$ is \emph{weakly Saito-holonomic} at $p \in D$ if, for some (or any) generating set $\mathcal{S} = \{\delta_1, \ldots, \delta_m\}$ of $\Der_{X,p}(-\log D)$ and some (or any) reduced local equation $f$ of $D$ at $p$, 
        
        $$\htt(\sigma(\delta_1)-\alpha_1 s, \ldots, \sigma(\delta_m)-\alpha_m s) = n \text{ in } \gr \D_{X,p}[s],$$
        
        where $\alpha_i \in \hol_{X,p}$ is such that $\delta_i(f) = \alpha_i f.$

        \item $D$ is \emph{Saito-holonomic} at $p \in D$ if, for some (or any) generating set $\mathcal{S} = \{\delta_1, \ldots, \delta_m\}$ of $\Der_{X,p}(-\log D)$, 
        
        $$\htt(\sigma(\delta_1), \ldots, \sigma(\delta_m)) = n  \text{ in }\gr \D_{X,p}.$$

        \item $D$ is \emph{strongly Saito-holonomic} at $p \in D$ if, for some (or any) generating set $\mathcal{S} = \{\delta_1, \ldots, \delta_m\}$ of $\Der_{X,p}(-\log D)$ and some (or any) reduced local equation $f$ of $D$ at $p$, 
        
        $$\htt(f,\sigma(\delta_1)-\alpha_1 s, \ldots, \sigma(\delta_m)-\alpha_m s) = n+1 \text{ in } \gr \D_{X,p}[s],$$
        
        where $\alpha_i \in \hol_{X,p}$ is such that $\delta_i(f) = \alpha_i f.$
        
    \end{itemize}
We say $D$ is \emph{weakly Saito-holonomic}, \emph{Saito-holonomic} or \emph{strongly Saito-holonomic} if it is so at each $p \in D$.
\end{Def}

\begin{Rmk}
$ $
\begin{itemize}
    \item[(a)] For free divisors, these definitions coincide with those of Koszul-freeness given in Definition \ref{DefKoszul} by virtue of Proposition \ref{PropHeight}.
    
    \item[(b)] In \cite[Theorem 7.4]{GMNS}, the authors state that holonomic free divisors in the sense of K. Saito (see \cite[Definition 3.8]{Saito80}) are exactly Koszul-free divisors. The same argument is valid in the general case to conclude that our definition of holonomicity is equivalent to the one given by K. Saito. This explains the name we have chosen for this property, in agreement with \cite[Definition 2.5]{Uli17}.
    
\end{itemize}
\end{Rmk}

By coherence, if $D$ satisfies any of the properties in Definition \ref{DefSaitoHolonomic} at $p$, then it does so in a neighbourhood of $p$. This will allow us to characterize divisors in $(\C^n,0)$ satisfying them in a neighbourhood of $0$ just by looking at the dimensions of the germs $D_i$ and $\tilde{D}_i$ defined in the previous section. In order to do this, we need first a technical lemma:

\begin{Lem}
\label{LemForSH}
    Let $E$ be a non-empty analytic germ in $(\C^n,0)$ given by $k$ equations with $0 \leq k \leq n$ and let $B(x)$ be an $m \times r$ matrix with entries in $\hol$. Let $W=\{(x,y) \in (E, 0) \times (\C^r, 0) \mid B(x)y=0\}$ and let $B_i = \{p \in (E,0) \mid \rank B(p) \leq i\}$. Then, $\dim W \leq r-k$ if and only if $B_i = \varnothing$ for all $i < k$ and $\dim B_i \leq i-k$ for all $k \leq i \leq s$, where $s=\min\{m,r\}$. In particular, as $B_s = E$, this can only hold when $s \geq n$.
\end{Lem}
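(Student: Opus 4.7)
My plan is to analyse $W$ through the projection $\pi \colon W \to E$ onto the first factor, whose fibre over $x \in E$ is $\ker B(x) \subset \C^r$, a linear subspace of dimension $r - \rank B(x)$. First I would stratify $E$ by the locally closed rank-exactly-$i$ loci $E_i := B_i \setminus B_{i-1}$ (setting $B_{-1} := \varnothing$), so that $W = \bigsqcup_{i=0}^s W_i$ with $W_i := \pi^{-1}(E_i)$. On each stratum the kernel of $B(x)$ has constant dimension $r-i$ and varies analytically, so $\pi|_{W_i}$ is a rank-$(r-i)$ vector bundle over $E_i$, and therefore $\dim W_i = \dim E_i + (r-i)$, with the convention $\dim \varnothing = -\infty$.

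Since $W$ is a finite disjoint union of the $W_i$, its germ dimension at $(0,0)$ satisfies $\dim W = \max_i \dim W_i = \max_i (\dim E_i + r - i)$. Consequently $\dim W \leq r - k$ is equivalent to $\dim E_i \leq i - k$ for every $i$; for $i < k$ this forces $\dim E_i < 0$, i.e.\ $E_i = \varnothing$. To translate between statements about the $E_i$ and the $B_i = \bigsqcup_{j \leq i} E_j$, I would use that $B_i = \varnothing$ iff every $E_j$ with $j \leq i$ is empty, together with $\dim B_i = \max_{j \leq i} \dim E_j$. Since $j \mapsto j - k$ is increasing, the collective bound $\dim E_j \leq j - k$ for all $k \leq j \leq i$ is equivalent to the single bound $\dim B_i \leq i - k$, yielding the stated equivalence.

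For the final ``in particular'' assertion, since $E$ is defined by $k$ equations in $(\C^n, 0)$, its dimension is at least $n - k$ by Krull's Hauptidealsatz; as $B_s = E$, the bound $\dim B_s \leq s - k$ then forces $s \geq n$.

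The step that I expect to require the most care is the identity $\dim W_i = \dim E_i + (r-i)$, because $E_i$ is only constructible (a difference of two analytic sets) rather than analytic. The right justification is that on the open subset $E \setminus B_{i-1}$ of $E$, $B(x)$ has rank at least $i$ and $E_i$ is the closed analytic subset where the rank equals exactly $i$; on $E_i$ the constant-rank theorem provides, locally, a trivialisation of $\pi|_{W_i}$ as a product with $\C^{r-i}$, from which the dimension formula is immediate.
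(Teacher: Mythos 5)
Your proof is correct, and it follows the same basic strategy as the paper (analyse the projection $\pi\colon W \to E$ and the rank loci of $B$), but the dimension count is organised differently. The paper proves the two implications separately: the forward direction uses the inequality $\dim \pi^{-1}(B_i) \geq \dim B_i + (r-i)$, and the converse decomposes $W$ into irreducible components $W'$ and applies the generic fibre-dimension formula $\dim W' = \lambda(\pi) + \dim \pi(W')$, treating the case $W' \subset \pi^{-1}(B_0)$ separately. You instead stratify by the rank-exactly-$i$ loci $E_i = B_i \setminus B_{i-1}$ and establish the exact identity $\dim W = \max_i(\dim E_i + r - i)$ via local triviality of $\ker B(x)$ over each stratum, from which both implications drop out at once; this buys a cleaner, symmetric argument at the price of handling the locally closed (constructible) strata, whereas the paper only ever measures the analytic sets $B_i$ and $\pi^{-1}(B_i)$. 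Your key step is sound, though what you need is not the constant-rank theorem for smooth maps but the elementary fact that if some $i\times i$ minor of $B$ is invertible near $p$ and the rank equals $i$ on $E_i$, then $\ker B(x)$ is parametrised analytically (linearly in the $r-i$ free coordinates) over $E_i$ near $p$, which gives the product structure and $\dim W_i = \dim E_i + (r-i)$. One phrase is slightly loose: for a fixed $i$, the bound $\dim B_i \leq i-k$ is not equivalent to the family of bounds $\dim E_j \leq j-k$ for $j \leq i$ (it only yields $\dim E_j \leq i-k$); the equivalence you want holds because the lemma quantifies over all $i$, so that $\dim E_i \leq \dim B_i \leq i-k$ in one direction and $\dim B_i = \max_{j \leq i} \dim E_j \leq \max_{j\leq i}(j-k) = i-k$ in the other --- the ingredients are all in your write-up, so this is a wording issue rather than a gap. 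Your ``in particular'' step is identical to the paper's ($E$ cut out by $k$ equations has dimension at least $n-k$, and $B_s = E$).
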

\begin{proof}
    Every germ of analytic set has a representative with the same dimension. In this proof we need to work with these representatives. To that end, we choose small enough neighbourhoods $U$ of $0 \in \C^n$ and $V$ of $0 \in \C^r$ and representatives in them of all the sets we are considering so that

    \begin{itemize}
        
        \item their dimensions agree with those of their respective germs,

        \item the entries of the matrix $B$ are defined in $U$, and

        \item the representative of $W$ has a finite decomposition into irreducible components (this can always be done by \cite[Ch. IV, \S3.1, Proposition 3.a]{Lojasiewicz}).
    \end{itemize}
    
    \noindent For simplicity, these sets will be denoted by the same symbol as the germ they represent. 

    Let us consider the projection $\pi: W \to E$. Let $0 \leq i \leq s$ be such that $B_i \neq \varnothing$, so that $\dim B_i \geq 0$. As $(p,0) \in W$ for every $p \in E$, the map $\pi:W \to E$ is surjective, and so is the restriction $\pi: \pi^{-1}(B_i) \to B_i$. 
    
    Note that $\pi^{-1}(p) = \{p\} \times \{y \in V \mid B(p) y = 0\}$. As the second factor is a piece of linear space, its dimension is constant at each point and equal to $r-\rank B(p)$. Thus, for every $p \in B_i$ we deduce that $\dim \pi^{-1}(p) \geq r-i$ ($\geq 0$ since $r \geq s$) and we can apply \cite[Ch. V, \S3.2, Theorem 2]{Lojasiewicz} to deduce that $\dim \pi^{-1}(B_i) \geq \dim \pi(\pi^{-1}(B_i)) + r-i = \dim B_i + r-i$.
    
    If $\dim W \leq r-k$, then

    \vspace{-0,1 cm}

    $$r-k \geq \dim W \geq \dim \pi^{-1}(B_i) \geq \dim B_i + r-i \hspace{0,2 cm} (\geq r-i).$$

    \vspace{0,2 cm}

    In particular, $i \geq k$ (so $B_i = \varnothing$ for $i < k$) and $\dim B_i \leq i-k$. 
    
    Note that $B_s = E$. Since $E$ is given by $k$ equations, $\dim E \geq n-k$ and so $n-k \leq \dim E = \dim B_s \leq s-k$. Therefore, $s \geq n$.

    To prove the converse, consider an irreducible component $W'$ of $W$. Let us distinguish two cases:
    
    \begin{itemize}
        \item If $W' \subset \pi^{-1}(B_0)$, in particular, $B_0 \neq \varnothing$, so $k=0$ and $\dim B_0 = 0$ by hypothesis. As $p \in B_0$ implies $\rank B(p) = 0$, we have $\dim \pi^{-1}(p) = r$ for every $p \in B_0$. Then, $\dim \pi^{-1}(B_0) = \dim B_0 + r = r$ and $\dim W' \leq \dim \pi^{-1}(B_0) = r = r-k$.  
        
        \item If $W' \not\subset \pi^{-1}(B_0)$, as $W' \subset W = \pi^{-1}(E) = \pi^{-1}(B_s)$, there exists $1 \leq i \leq s$ for which $W'$ is contained in $\pi^{-1}(B_i)$ but not in $\pi^{-1}(B_{i-1})$. In particular, $B_i \neq \varnothing$ and so $i \geq k$ by hypothesis.

        Let $\pi' := \pi|_{W'}: W' \to E$ and let $\lambda(\pi')$ be the generic dimension of the fibres of $\pi'$, which is defined as

        \vspace{-0,3 cm}

        $$\lambda(\pi'):= \min \{\dim (\pi'^{-1}(\pi'(x,y)))_{(x,y)} \mid (x,y) \in W'\}.$$ 

        Since $W'$ is irreducible, we have the equality $\dim W' = \lambda(\pi') + \dim \pi'(W')$ \cite[Ch. V, \S3.3, formula (1) and \S3.2, Theorem 4]{Lojasiewicz}. Let us bound each of these summands:

        On one hand, for each $(x,y) \in W'$, we have
        
        $$\pi'^{-1}(\pi'(x,y)) = \pi'^{-1}(x) = \pi^{-1}(x) \cap W' \subset \pi^{-1}(x), $$
        
        so $\lambda(\pi') \leq \dim (\pi^{-1}(x))_{(x,y)}$ for every $(x,y) \in W'$. 
        
        Since $W'$ is not contained in $\pi^{-1}(B_{i-1})$, there exists $(p,q) \in W' \setminus \pi^{-1}(B_{i-1}) \subset \pi^{-1}(B_i) \setminus \pi^{-1}(B_{i-1})$. Then, $p = \pi(p,q) \in B_i \setminus B_{i-1}$ and so $\rank B(p) = i$. As explained before, $\pi^{-1}(p)$ is then of constant dimension $r-\rank B(p) = r-i$. We deduce that

        \vspace{-0,2 cm}
        
        $$\lambda(\pi') \leq \dim (\pi^{-1}(p))_{(p,q)} = r-i. $$

        On the other hand, $\pi' (W') = \pi(W') \subset B_i$, so $\dim \pi'(W') \leq \dim B_i$. 
        
        We finally get
        
        $$ \dim W' = \lambda(\pi') + \dim \pi'(W') \leq r-i+\dim B_i \leq r-k,$$
        
        where the last inequality holds by hypothesis.

    \end{itemize}

    We conclude that every irreducible component of $W'$ verifies $\dim W' \leq r-k$ and, thus, $\dim W \leq r-k$.
    
\end{proof}

In what follows, we will need to consider the following map of graded algebras:

$$\begin{array}{cccccc}
    \varphi: & \hol[\underline{\xi},s] & \longrightarrow & \Rees(\mathcal{J}_f) \\
    & \xi_i & \mapsto & \partial_i(f) \cdot t \\
    & s & \mapsto & f \cdot t
\end{array}$$

\noindent where $\Rees(\mathcal{J}_f) =\bigoplus_{i=0}^\infty \mathcal{J}_f^i t^i $ is the Rees algebra of $\mathcal{J}_f$. As $\varphi$ is surjective and $\Rees(\mathcal{J}_f)$ is an integral domain, $\ker \varphi$ is a prime ideal. Since the dimension of $\hol[\underline{\xi},s]$ is $2n+1$ and that of $\Rees(\mathcal{J}_f)$ is $n+1$ (cf. \cite[Theorem 5.1.4]{ReesDim}), we deduce that $\htt(\ker \varphi) = n$.

The homogeneous part of degree $1$ of $\ker \varphi$ is generated as an $\hol$-module by the set $\{\sigma(\delta_i)-\alpha_i s, i=1, \ldots, m\}$ (in the notation of Definition \ref{DefSaitoHolonomic}).  The ideal of $\hol[\underline{\xi},s]$ generated by these elements is denoted by $\ker^{(1)} \varphi$. A divisor $D$ is said to be of \emph{linear Jacobian type} if $\ker \varphi = \ker^{(1)} \varphi$.

\begin{Th}
\label{ThCaractSH}
    Let $D = V(f)$ be a germ of divisor in $(\C^n,0)$. Then:

    \begin{itemize}
        \item[(1)] $D$ is \emph{weakly Saito-holonomic} if and only if $\dim \tilde{D}_i \leq i$ for all $i=0, \ldots, n-3$.

        \item[(2)] $D$ is \emph{Saito-holonomic} if and only if $\dim D_i \leq i$ for all $i=0, \ldots, n-3$.

        \item[(3)] $D$ is \emph{strongly Saito-holonomic} if and only if $\tilde{D}_0 = \varnothing$ and $\dim \tilde{D}_i \leq i-1$ for all $i=1, \ldots, n-2$.
        
    \end{itemize}
    
\end{Th}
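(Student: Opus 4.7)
The plan is to treat all three parts in parallel, using the same two-step reduction. First, Proposition \ref{PropHeight} (applied with $s$ as an additional graded variable where needed) transports each height condition from $\gr \D_{X,p}[s]$ or $\gr \D_{X,p}$ to the local Cohen--Macaulay ring $\C\{\underline{x}, \underline{\xi}, s\}$ or $\C\{\underline{x}, \underline{\xi}\}$, where height equals codimension; this converts the condition into a dimension bound on the corresponding analytic vanishing locus. Second, these loci are precisely the sets $W$ appearing in Lemma \ref{LemForSH}, and the lemma transforms the dimension bound into bounds on the rank-filtration sets $D_i$ or $\tilde{D}_i$, most of which will turn out to be automatic.

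For (1), the ideal $I = (\sigma(\delta_i) - \alpha_i s)_i$ lies inside $\ker \varphi$, which has height $n$; hence $\htt I \leq n$ automatically, and weak Saito-holonomicity ($\htt I = n$) is equivalent to $\dim V(I) \leq n+1$. The variety $V(I)$ is cut out by $\tilde{A}(x) \cdot (\underline{\xi}, s)^t = 0$, so it coincides with the set $W$ of Lemma \ref{LemForSH} with $E = (\C^n, 0)$, $k = 0$, $r = n+1$, $B = \tilde{A}$, which identifies $B_i$ with $\tilde{D}_i$. The lemma then yields $\dim \tilde{D}_i \leq i$ for each admissible $i$, but the bounds for $i \geq n-2$ are automatic since $\tilde{D}_i \subset \tilde{D}_{n-1} = \Sing D$ has dimension $\leq n-2$ in that range (Proposition \ref{PropDIrel}), leaving exactly the stated range $0 \leq i \leq n-3$. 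Part (2) is entirely analogous after replacing $\tilde{A}$ by $A$ and $r = n+1$ by $r = n$; here $\htt I \leq n$ comes from $V(I) \supset \{\underline{\xi} = 0\}$, and the bounds for $i \geq n-2$ are again automatic using $D_{n-2} = \Sing D$ and $D_{n-1} = D$ (Proposition \ref{PropD}).

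For (3), adding the extra generator $f$ raises the upper bound on the height by at most one, so $\htt(f, \sigma(\delta_i) - \alpha_i s) \leq n+1$, and strong Saito-holonomicity is equivalent to $\dim V \leq n$. The vanishing locus sits inside $(D, 0) \times (\C^{n+1}, 0)$ and coincides with the $W$ of Lemma \ref{LemForSH} with $E = D = V(f)$ (so $k = 1$), $r = n+1$, $B = \tilde{A}$. The nesting $\tilde{D}_i \subset \tilde{D}_{n-1} = \Sing D \subset D$ gives $B_i = \tilde{D}_i$ for all $i \leq n-1$, and the lemma then yields $\tilde{D}_0 = \varnothing$ together with $\dim \tilde{D}_i \leq i-1$ for each admissible $i \geq 1$; the conditions for $i \geq n-1$ are automatic because $\dim \tilde{D}_{n-1} \leq n-2$, which leaves exactly the range $1 \leq i \leq n-2$ stated in the theorem.

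The main technical obstacle is the bookkeeping: identifying correctly the set $B_i$ with the germ $\tilde{D}_i$ (respectively $D_i$) and checking which of the inequalities produced by Lemma \ref{LemForSH} are already forced by Propositions \ref{PropD} and \ref{PropDIrel}, in particular by $D_{n-1} = D$ and $D_{n-2} = \tilde{D}_{n-1} = \Sing D$. The translation from height to codimension via Cohen--Macaulayness is routine once this bookkeeping is settled, and the automatic upper bounds on $\htt I$ (from $\ker \varphi$ in (1) and (3), and from $\{\underline{\xi}=0\}$ in (2)) guarantee that the height equalities degenerate to single inequalities that the lemma is designed to handle.
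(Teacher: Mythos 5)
Your proposal is correct and follows essentially the same route as the paper's proof: pass to the analytic local ring via Proposition \ref{PropHeight}, identify the vanishing loci with the sets $W$ of Lemma \ref{LemForSH} (with $E=(\C^n,0)$, $k=0$, $B=\tilde{A}$ or $A$ for (1)--(2), and $E=D$, $k=1$, $B=\tilde{A}$ for (3)), and discard the automatic inequalities using Propositions \ref{PropD} and \ref{PropDIrel}. The only cosmetic difference is in (3), where you justify the a priori bound $\htt(f,\sigma(\delta_1)-\alpha_1 s,\ldots,\sigma(\delta_m)-\alpha_m s)\leq n+1$ by ``adding $f$ raises the height by at most one'' (valid here because $f$ lies in the maximal ideal of the local Cohen--Macaulay ring), which is the same observation the paper phrases geometrically as $\dim W \geq \dim V(\ker^{(1)}\varphi)-1\geq n$.
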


\begin{proof}

Let $\mathcal{S} = \{\delta_1, \ldots, \delta_m\}$ be a generating set of $\Der_f$ and let $A$ be the Saito matrix with respect to $\mathcal{S}$. Let $\alpha_i \in \hol$ such that $\delta_i(f)=\alpha_i f$ for all $i=1, \ldots, m$ and let $\tilde{A}$ be the extended Saito matrix. By coherence, it is enough to show these properties hold at $0$:

    \begin{itemize}
        \item[(1)] $D$ is weakly Saito-holonomic at $0$ if and only if $\htt (\sigma(\delta_1)-\alpha_1 s, \ldots, \sigma(\delta_m)-\alpha_m s) = n$ in $\hol[\underline{\xi},s]$. As $(\sigma(\delta_1)-\alpha_1 s, \ldots, \sigma(\delta_m)-\alpha_m s)$ is a homogeneous ideal (with respect to $\xi_1, \ldots, \xi_n,s$), by Proposition \ref{PropHeight}, this is equivalent to $\htt (\sigma(\delta_1)-\alpha_1 s, \ldots, \sigma(\delta_m)-\alpha_m s) = n$ in $\C\{\underline{x},\underline{\xi},s\}$ or even $\dim W = n+1$, where $W = V(\sigma(\delta_1)-\alpha_1 s, \ldots, \sigma(\delta_m)-\alpha_m s) \subset (\C^{2n+1},0)$. As $(\sigma(\delta_1)-\alpha_1 s, \ldots, \sigma(\delta_m)-\alpha_m s) = \ker^{(1)} \varphi \subset \ker \varphi$ and $\ker \varphi$ is a prime ideal of $\hol[\underline{\xi},s]$ of height $n$, the inequality $\htt (\sigma(\delta_1)-\alpha_1 s, \ldots, \sigma(\delta_m)-\alpha_m s) \leq n$ (or $\dim W \geq n+1$) always holds. Thus, $D$ is weakly Saito-holonomic at $0$ if and only if $\dim W \leq n+1$. Note that we can write

        \vspace{-0,1 cm}
        
        $$W=\{(x,(\xi,s)) \in (\C^n,0) \times (\C^{n+1},0) \mid \tilde{A}(x)(\xi,s)^t=0\}.$$

        Now, we just need to apply Lemma \ref{LemForSH}. With the notation of the lemma, $E=\C^n, k=0$, $B(x)=\tilde{A}(x)$, $r=n+1$, $s = \min\{m,n+1\} \in \{n,n+1\}$ (recall that $m \geq n$) and $B_i = \tilde{D}_i$. Thus, $\dim W \leq n+1$ if and only if $\dim \tilde{D}_i \leq i$ for all $i=0, \ldots, s$. The conditions $\dim \tilde{D}_n \leq n$ and $\dim \tilde{D}_{n+1} \leq n+1$ (if needed) always holds because $\tilde{D}_n \subset \tilde{D}_{n+1} \subset \C^n$. Moreover, since $\tilde{D}_{n-2} \subset \tilde{D}_{n-1} = \Sing D$ and the singular locus of $D$ is of dimension at most $n-2$, we have $\dim \tilde{D}_{n-2} \leq \dim \tilde{D}_{n-1} \leq n-2 < n-1$. We conclude that $D$ is weakly Saito-holonomic if and only if $\dim \tilde{D}_i \leq i$ for all $i=0, \ldots, n-3$.

        \item[(2)] $D$ is Saito-holonomic at $0$ if and only if $\htt (\sigma(\delta_1), \ldots, \sigma(\delta_m)) = n$ in $\hol[\underline{\xi}]$, or in $\C\{\underline{x}, \underline{\xi}\}$ by Proposition \ref{PropHeight}. Since $(\sigma(\delta_1), \ldots, \sigma(\delta_m)) \subset (\xi_1, \ldots, \xi_n)$, we get $\htt (\sigma(\delta_1), \ldots, \sigma(\delta_m)) \leq n$. Thus, we just need to consider the other inequality, which is equivalent to $\dim W \leq n$, where $W = V(\sigma(\delta_1), \ldots, \sigma(\delta_m)) = \{(x,\xi) \in (\C^n,0) \times (\C^n,0) \mid 
        A(x)\xi^t=0\}$. We proceed as in $(1)$ with $E=\C^n, k=0, B(x)=A(x)$, $r=n$, $s=\min\{m,r\} = n$ and $B_i = D_i$ to obtain that $\dim W \leq n$ if and only if $\dim D_i \leq i$ for all $0 \leq i \leq n$. Since $D_n = \C^n$, $D_{n-1} = D$ and $D_{n-2} = \Sing D$, the last three conditions are automatically satisfied. 

        \item[(3)] $D$ is strongly Saito-holonomic at $0$ if and only if $\htt (f, \sigma(\delta_1)-\alpha_1 s, \ldots, \sigma(\delta_m)-\alpha_m s) = n+1$ in $\hol[\underline{\xi},s]$ or, reasoning as before, if $\dim W = n$, where $W = V(f,\sigma(\delta_1)-\alpha_1 s, \ldots, \sigma(\delta_m)-\alpha_m s) = \{(x,(\xi,s)) \in D \times (\C^{n+1},0) \mid 
        \tilde{A}(x)(\xi,s)^t=0\}$. Note that 

        $$\dim W \geq \dim V(\sigma(\delta_1)-\alpha_1 s, \ldots, \sigma(\delta_m)-\alpha_m s) - 1 \geq n, $$
        
        where the last inequality was argued in $(1)$. Thus, $D$ is strongly Saito-holonomic if and only if $\dim W \leq n$, where $W = V(f,\sigma(\delta_1)-\alpha_1 s, \ldots, \sigma(\delta_m)-\alpha_m s) = \{(x,(\xi,s)) \in D \times (\C^{n+1},0) \mid 
        \tilde{A}(x)(\xi,s)^t=0\}$. We apply again Lemma \ref{LemForSH} with $E=D, k=1, B(x)=\tilde{A}(x)$, $r=n+1$ and $s=\min\{m,n+1\} \in \{n,n+1\}$ to obtain that $\dim W \leq n$ if and only if $B_0 = \varnothing$ and $\dim B_i \leq i-1$ for all $i=1, \ldots, s$.  Now, note that
        $B_i = \{p \in D \mid \rank \tilde{A}(p) \leq i\} = \tilde{D}_i$ for $i \leq n-1$, since $\tilde{D}_i \subset D$ for $i \leq n-1$ (observe that in $\tilde{D}_i$ we allow $p$ to be in principle any point of $(\C^n,0)$, while in $B_i$ it must be a point of $D$). Since $B_{n-1} = \tilde{D}_{n-1} = \Sing D$ and $B_n \subset B_{n+1} = D$,  the condition $\dim B_i \leq i-1$ is always satisfied for $i \in \{n-1,n,n+1\}$. We conclude that $D$ is strongly Saito-holonomic if and only if $\tilde{D}_0 = \varnothing$ and $\dim \tilde{D}_i \leq i-1$ for $i=1, \ldots, n-2$.
    \end{itemize}
    
\end{proof}

\begin{Rmk}
$ $
\begin{itemize}
    \item[(a)] As $\tilde{D}_i \subset D_i \subset \tilde{D}_{i+1}$, it is now clear that strongly Saito-holonomic $\Rightarrow$ Saito-holonomic $\Rightarrow$ weakly Saito-holonomic.

    \item[(b)] If $D$ is Saito-holonomic or weakly Saito-holonomic in a punctured neighbourhood of a point $p$, then it is so at $p$. This is because outside the fibre at $p$, the set $W$ (in the notation of the proof of Theorem \ref{ThCaractSH}) has the required dimension ($n$ or $n+1$, respectively) and the fibre at $p$ (of maximal dimension $n$ or $n+1$, respectively) cannot increase that dimension.

    This argument cannot be applied to strong Saito-holonomicity, since we need $W$ to be $n$-dimensional and the fibre at $p$ could be of dimension $n+1$. In fact, any non-quasihomogeneous germ of plane curve (which is a non-strongly Euler-homogeneous isolated singularity) is a counterexample (see Theorem \ref{ThSHandSEH}).
    
    \item[(c)] The characterization given by K. Saito for holonomic divisors in terms of the dimension of the sets $D_i$ ($A_r$ in his notation) coincides with the one given in Theorem \ref{ThCaractSH} for Saito-holonomic divisors (see \cite[Definition 3.12 and Lemma 3.13]{Saito80}). This confirms the equivalence of the two definitions.
\end{itemize}
    
\end{Rmk}

Some straightforward consequences of Theorem \ref{ThCaractSH} in low dimension are:

\begin{Cor}
\label{CorSHLowdim}
Let $D$ be a germ of divisor in $(\C^n,0)$.

\begin{itemize}
    \item[(1)] If $n=2$, then $D$ is Saito-holonomic (this was first proved in \cite[Corollary 4.2.2]{Cald1999}).
    
    \item[(2)] If $n=3$ and $D$ is strongly Euler-homogeneous, then it is weakly Saito-holonomic. 
\end{itemize}
\end{Cor}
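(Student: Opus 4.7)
The plan is to deduce both assertions directly from the dimensional characterizations in Theorem \ref{ThCaractSH}, taking advantage of the fact that low ambient dimension trivializes most of the conditions involved.

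For (1), I would observe that by Theorem \ref{ThCaractSH}(2), $D$ is Saito-holonomic if and only if $\dim D_i \leq i$ for all $i = 0, \ldots, n-3$. When $n = 2$ this index range is empty, so the condition holds vacuously and there is nothing to check.

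For (2), by Theorem \ref{ThCaractSH}(1), weak Saito-holonomicity is equivalent to $\dim \tilde{D}_i \leq i$ for all $i = 0, \ldots, n-3$. With $n=3$, the only condition to verify is $\dim \tilde{D}_0 \leq 0$. Since $D$ is strongly Euler-homogeneous, Theorem \ref{ThGeomEH}(3) gives $\tilde{D}_0 = \varnothing$, so the dimensional inequality is automatic.

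There is no real obstacle here; both statements are immediate applications of Theorems \ref{ThCaractSH} and \ref{ThGeomEH} once one notices that the list of conditions to check collapses to (at most) a single one in the relevant dimensions. The only minor point is the harmless convention $\dim \varnothing \leq 0$, which is what lets the strong Euler-homogeneity hypothesis in part (2) dispose of the sole remaining condition.
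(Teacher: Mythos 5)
Your proposal is correct and follows essentially the same argument as the paper: part (1) is vacuous since the index range in Theorem \ref{ThCaractSH}(2) is empty for $n=2$, and part (2) reduces to $\dim \tilde{D}_0 \leq 0$, which holds because strong Euler-homogeneity forces $\tilde{D}_0 = \varnothing$ by Theorem \ref{ThGeomEH}. No differences worth noting.
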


\begin{proof}
    For $n=2$, the conditions for Saito-holonomicity in Theorem \ref{ThCaractSH} are automatically satisfied. 
    For $n=3$, $D$ is weakly Saito-holonomic if and only if $\dim \tilde{D}_0 \leq 0$. But strong Euler-homogeneity implies $\tilde{D}_0 = \varnothing$, so this holds.
    
\end{proof}

As it happens with strong Euler-homogeneity and Koszul-freeness \cite[Proposition 1.10]{CN02}, the three versions of Saito-holonomicity are well-behaved with respect to smooth factors: 

\begin{Prop}
\label{PropProductSH}
    Let $D'$ be a germ of divisor in $(\C^{n-1},0)$ and let $D = D' \times \C \subset (\C^n,0)$. Then $D$ is weakly Saito-holonomic, Saito-holonomic or strongly Saito-holonomic if and only if so is $D'$.
\end{Prop}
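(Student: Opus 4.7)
The plan is to work with a generating set of $\Der_f$ compatible with the product structure, exploit the resulting block form of the Saito matrices, and translate the characterizations of Theorem \ref{ThCaractSH} through the projection $\pi \colon (\C^n, 0) \to (\C^{n-1}, 0)$. Choose coordinates so that $D' = V(f')$ with $f' \in \hol_{n-1} := \C\{x_1, \ldots, x_{n-1}\}$ and $D = V(f)$ with $f = f' \in \hol$ (viewed as independent of $x_n$). Fix a generating set $\{\delta_1', \ldots, \delta_m'\}$ of $\Der_{f'}$ over $\hol_{n-1}$, with $\delta_i'(f') = \alpha_i' f'$, and let $A'$, $\tilde{A}'$ denote the associated Saito and extended Saito matrices of $D'$.

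The first and only delicate step is to show that $\{\delta_1', \ldots, \delta_m', \partial_n\}$ generates $\Der_f$ as an $\hol$-module. For this, tensor the defining short exact sequence
$$ 0 \to \Der_{f'} \to \hol_{n-1}^{n-1} \to \mathcal{J}_{f'}/(f') \to 0 $$
with the flat $\hol_{n-1}$-algebra $\hol$; the result identifies $\Der_{f'} \otimes_{\hol_{n-1}} \hol$ with the $\hol$-submodule of $\Der_f$ consisting of derivations that do not involve $\partial_n$. Since $\partial_n(f) = 0$, any $\delta \in \Der_f$ decomposes uniquely as such a derivation plus a multiple of $\partial_n$, so the above generating set works. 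With respect to it and to the equation $f$, the Saito and extended Saito matrices of $D$ take the block form
$$ A = \begin{pmatrix} A' & 0 \\ 0 & 1 \end{pmatrix}, \qquad \tilde{A} = \begin{pmatrix} A' & 0 & -\overline{\alpha'} \\ 0 & 1 & 0 \end{pmatrix}, $$
so $\rank A(p) = \rank A'(\pi(p)) + 1$ and $\rank \tilde{A}(p) = \rank \tilde{A}'(\pi(p)) + 1$ for every $p \in (\C^n, 0)$. With the convention $D_{-1}(D') = \tilde{D}_{-1}(D') = \varnothing$, this yields $D_i(D) = \pi^{-1}(D_{i-1}(D'))$ and $\tilde{D}_i(D) = \pi^{-1}(\tilde{D}_{i-1}(D'))$; in particular $D_0(D) = \tilde{D}_0(D) = \varnothing$, and whenever the source set is non-empty $\dim D_i(D) = \dim D_{i-1}(D') + 1$ and $\dim \tilde{D}_i(D) = \dim \tilde{D}_{i-1}(D') + 1$.

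The conclusion follows by applying Theorem \ref{ThCaractSH} with the index shift $j = i - 1$. Weak Saito-holonomicity of $D$ requires $\dim \tilde{D}_i(D) \leq i$ for $i = 0, \ldots, n-3$; the case $i = 0$ is automatic and for $1 \leq i \leq n - 3$ the bound becomes $\dim \tilde{D}_j(D') \leq j$ for $j = 0, \ldots, (n-1)-3$, which is exactly the criterion for $D'$. The Saito-holonomic case is entirely analogous with $D_i$ in place of $\tilde{D}_i$. For strong Saito-holonomicity, $\tilde{D}_0(D) = \varnothing$ holds automatically and the bound $\dim \tilde{D}_i(D) \leq i - 1$ for $i = 1, \ldots, n-2$ translates into $\dim \tilde{D}_j(D') \leq j - 1$ for $j = 0, \ldots, (n-1)-2$; at $j = 0$ this reads $\tilde{D}_0(D') = \varnothing$, and for $j \geq 1$ it reproduces the dimension bounds of Theorem \ref{ThCaractSH}(3) for $D'$. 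The main obstacle is the generating-set assertion for $\Der_f$; once it is in hand, the rest is bookkeeping with the index shift.
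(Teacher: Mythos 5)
Your argument is correct and follows essentially the same route as the paper's proof: a product-adapted generating set $\{\delta_1',\ldots,\delta_m',\partial_n\}$ of $\Der_f$, the resulting block Saito matrices giving $\rank A(p)=\rank A'(\pi(p))+1$ and $\rank\tilde A(p)=\rank\tilde A'(\pi(p))+1$, hence $D_i=D'_{i-1}\times\C$, $\tilde D_i=\tilde D'_{i-1}\times\C$, and then the index-shifted application of Theorem \ref{ThCaractSH}, including the correct observation that $\dim\tilde D_1\leq 0$ forces $\tilde D'_0=\varnothing$. The only difference is that you justify the generating-set claim via flat base change along $\hol_{n-1}\subset\hol$, a step the paper simply asserts, which is a harmless (and welcome) elaboration.
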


\begin{proof}
    Let $D' = V(g)$, where $g \in \C\{x_2, \ldots, x_n\}$ and let $f(x_1, \ldots, x_n) = g(x_2, \dots, x_n)$, so that $D = V(f)$. If $\mathcal{S}' = \{\delta_1, \ldots, \delta_m\}$ is a generating set for $\Der_g$, then $\mathcal{S} = \{\partial_1, \delta_1, \ldots, \delta_m\}$ is a generating set for $\Der_f$ (where $\partial_1(f) = 0$). Thus, if $A'$ and $\tilde{A}'$ are the Saito matrices for $D'$ with respect to $\mathcal{S}'$ (and $g$), then the Saito matrices for $D$ with respect to $\mathcal{S}$ (and $f$) are:

$$ A = \left(\begin{array}{@{}c|c@{}}
  1 & \begin{matrix}
  0 & \ldots & 0 \\
  \end{matrix} \\
\hline
  \begin{matrix}
  0 \\
  \vdots \\
  0
  \end{matrix} & A'
\end{array}\right), \hspace{0.5 cm} \tilde{A} = \left(\begin{array}{@{}c|c@{}}
  1 & \begin{matrix}
  0 & \ldots & 0 \\
  \end{matrix} \\
\hline
  \begin{matrix}
  0 \\
  \vdots \\
  0
  \end{matrix} & \tilde{A}'
\end{array}\right).$$

It is clear that $\rank A(p) = \rank A'(p) + 1$ and $\rank \tilde{A}(p) = \rank \tilde{A}'(p)+1$. Consequently, we have the relations $D_i = D'_{i-1} \times \C$ and $\tilde{D}_i = \tilde{D}'_{i-1} \times \C$ for $i \geq 1$. Observe that $D_i = \varnothing$ if and only if $D'_{i-1} = \varnothing$ and similarly for $\tilde{D}_i$ and $\tilde{D}'_{i-1}$. Note also that $D_0 = \tilde{D}_0 = \varnothing$. Thus, if they are not empty, $\dim D_i \leq k$ if and only if $\dim D'_{i-1} \leq k-1$ and $\dim \tilde{D}_i \leq k$ if and only if $\dim \tilde{D}'_{i-1} \leq k-1$. This proves the result for weakly Saito-holonomic and Saito-holonomic divisors if we take $k=i$ by virtue of Theorem \ref{ThCaractSH}. Taking $k=i-1$ and noticing that if $\dim \tilde{D}_1 = \dim(\tilde{D}_0' \times \C) \leq 0$, then $\tilde{D}_0' = \varnothing$, we prove the result for strong Saito-holonomicity.

\end{proof}

\begin{Rmk}
    An inductive argument shows that if $D = D' \times \C^k$, where $D'$ is a divisor in $\C^{n-k}$, then $D_i = D'_{i-k} \times \C^k$ and $\tilde{D}_i = \tilde{D}'_{i-k} \times \C^k$ for $i \geq k$, and $D_i = \tilde{D}_i = \varnothing$ for $i < k$.
\end{Rmk}

A direct algebraic translation of Theorem \ref{ThCaractSH} is the following:

\begin{Cor}
\label{CorAlgCaractSH}
    Let $D = V(f)$ be a germ of divisor in $(\C^n,0)$. Then:

    \begin{itemize}
        \item[(1)] $D$ is \emph{weakly Saito-holonomic} if and only if $\hspace{0,1 cm} \htt \Fitt_i(\mathcal{J}_f) \geq i$ for all $i=3, \ldots, n$.

        \item[(2)] $D$ is \emph{Saito-holonomic} if and only if $\hspace{0,1 cm} \htt \Fitt_i(\mathcal{J}_f/(f)) \geq i+1$ for all $i=2, \ldots, n-1$.

        \item[(3)] $D$ is \emph{strongly Saito-holonomic} if and only if $\hspace{0,1 cm} (\tilde{I}_1 =) \Fitt_n(\mathcal{J}_f) = \hol$ and $\htt \Fitt_i(\mathcal{J}_f) \geq i+1$ for all $i=2, \ldots, n-1$.
        
    \end{itemize}
    
\end{Cor}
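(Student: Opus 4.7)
The proof is essentially a dictionary translation of Theorem \ref{ThCaractSH} into Fitting-ideal height conditions, using two simple ingredients. The plan is as follows.

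First, I would record the dimension-height relation in the ambient ring $\hol = \C\{x_1,\ldots,x_n\}$: since $\hol$ is a regular (in particular Cohen--Macaulay) local ring of dimension $n$, for every ideal $I \subset \hol$ one has $\dim V(I) = n - \htt I$. In particular, a condition of the form $\dim V(I) \leq d$ is equivalent to $\htt I \geq n-d$, and the vanishing $V(I) = \varnothing$ (equivalent to $I = \hol$) corresponds to $\htt I = +\infty$, which we simply encode by saying $I = \hol$.

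Next, I would spell out the dictionary between our sets and the Fitting ideals. By the definitions given in Section \ref{SectionCriterionSEH}, $\tilde{D}_i = V(\tilde{I}_{i+1})$ with $\tilde{I}_{i+1} = \Fitt_{n-i}(\mathcal{J}_f)$, and $D_i = V(I_{i+1})$ with $I_{i+1} = \Fitt_{n-i-1}(\mathcal{J}_f/(f))$. Applying the relation above:
\begin{itemize}
\item $\dim \tilde{D}_i \leq i \iff \htt \Fitt_{n-i}(\mathcal{J}_f) \geq n-i$;
\item $\dim D_i \leq i \iff \htt \Fitt_{n-i-1}(\mathcal{J}_f/(f)) \geq n-i$;
\item $\dim \tilde{D}_i \leq i-1 \iff \htt \Fitt_{n-i}(\mathcal{J}_f) \geq n-i+1$;
\item $\tilde{D}_0 = \varnothing \iff \tilde{I}_1 = \Fitt_n(\mathcal{J}_f) = \hol$.
\end{itemize}

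Finally, I would substitute $j := n-i$ (respectively $j := n-i-1$) and read off the ranges. For item $(1)$ of Theorem \ref{ThCaractSH}, as $i$ runs through $0,\ldots,n-3$, the index $j=n-i$ runs through $3,\ldots,n$, which yields part $(1)$ of the corollary. For item $(2)$, letting $j=n-i-1$ gives $j=2,\ldots,n-1$ with bound $\htt \Fitt_j(\mathcal{J}_f/(f)) \geq j+1$, which is part $(2)$. For item $(3)$, the condition $\tilde{D}_0 = \varnothing$ becomes $\Fitt_n(\mathcal{J}_f) = \hol$, and the bounds $\dim \tilde{D}_i \leq i-1$ for $i=1,\ldots,n-2$ become $\htt \Fitt_j(\mathcal{J}_f) \geq j+1$ for $j=n-i \in \{2,\ldots,n-1\}$, giving part $(3)$.

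There is no genuine obstacle here; the only thing to verify carefully is the Cohen--Macaulay dimension formula (which is standard for $\hol$) and the bookkeeping of the index shifts between the $\tilde{D}_i$/$D_i$ indexing and the Fitting-ideal indexing. The substantive content has already been done in Theorem \ref{ThCaractSH}; this corollary is the formal algebraic restatement.
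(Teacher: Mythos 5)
Your proposal is correct and takes essentially the same route as the paper's proof: translate the dimension conditions of Theorem \ref{ThCaractSH} via the relation $\dim V(I) = n - \htt I$ in the Cohen--Macaulay local ring $\hol$ (with $V(I)=\varnothing$ corresponding to $I=\hol$), identify $\tilde{D}_i = V(\Fitt_{n-i}(\mathcal{J}_f))$ and $D_i = V(\Fitt_{n-i-1}(\mathcal{J}_f/(f)))$, and perform the index shifts. The bookkeeping in your substitutions $j=n-i$ and $j=n-i-1$ checks out, so nothing further is needed.
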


\begin{proof}
    To prove $(1)$ and $(3)$ we use that $\tilde{D}_i = V(\tilde{I}_{i+1}) = V(\Fitt_{n-i}(\mathcal{J}_f))$, so $\dim \tilde{D}_i \leq k$ if and only if $\htt \Fitt_{n-i}(\mathcal{J}_f) \geq n-k$. We proceed similarly for $(2)$ with $D_i = V(I_{i+1}) = V(\Fitt_{n-i-1}(\mathcal{J}_f/(f)))$.

\end{proof}

In the free case, we know that, if a divisor is Koszul-free, then it is strongly Koszul-free if and only if it is strongly Euler-homogeneous \cite[Theorem 4.7]{NarvDual}. We are now able to generalize this result to the non-free case:

\begin{Th}
\label{ThSHandSEH}
    Let $D \subset X$ be a Saito-holonomic divisor. Then, it is strongly Saito-holonomic if and only if it is strongly Euler-homogeneous.
\end{Th}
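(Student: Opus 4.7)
The plan is to rephrase both hypotheses and the conclusion in terms of the analytic sets $D_i$ and $\tilde{D}_i$ via the geometric characterizations of Theorem \ref{ThGeomEH} and Theorem \ref{ThCaractSH}, and then compare the resulting conditions directly.

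For the implication ``Saito-holonomic and strongly Euler-homogeneous $\Rightarrow$ strongly Saito-holonomic'', I would use Theorem \ref{ThGeomEH}(3) to rewrite strong Euler-homogeneity as $\tilde{D}_0 = \varnothing$ together with $\tilde{D}_i = D_{i-1}$ for $i = 1, \ldots, n-2$. Saito-holonomicity then supplies, via Theorem \ref{ThCaractSH}(2) applied with index $j = i-1$, the bound $\dim \tilde{D}_i = \dim D_{i-1} \leq i-1$. These two facts are precisely the content of strong Saito-holonomicity by Theorem \ref{ThCaractSH}(3).

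For the harder converse, I plan to proceed by induction on the ambient dimension $n \geq 2$, following the scheme outlined in Remark \ref{RmkProductInduction}. The base case $n=2$ is immediate: $D$ is automatically Saito-holonomic by Corollary \ref{CorSHLowdim}(1), and both properties in question reduce to the single condition $\tilde{D}_0 = \varnothing$ (last line of Theorem \ref{ThGeomEH}, and Theorem \ref{ThCaractSH}(3) with its vacuous list of dimension bounds). For the inductive step, strong Saito-holonomicity already yields $\tilde{D}_0 = \varnothing$, so Theorem \ref{ThGeomEH}(1) delivers strong Euler-homogeneity on $D_0$. At any point $p \in D \setminus D_0$, Proposition \ref{PropD}(3) produces a local isomorphism $(D,p) \cong (D',0) \times (\mathbb{C},0)$ with $D' \subset (\mathbb{C}^{n-1}, 0)$; by Proposition \ref{PropProductSH} both Saito-holonomicity and strong Saito-holonomicity transfer to $D'$, and the inductive hypothesis then makes $D'$ strongly Euler-homogeneous at $0$. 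Since strong Euler-homogeneity is preserved under smooth factors, $D$ is strongly Euler-homogeneous at $p$, closing the induction.

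The main obstacle sits in this converse: the dimension bounds $\dim \tilde{D}_i \leq i-1$ provided by strong Saito-holonomicity do not by themselves force the set-theoretic equality $\tilde{D}_i = D_{i-1}$ demanded by the criterion for strong Euler-homogeneity, since a priori $\tilde{D}_i$ could contain components lying in $D_i \setminus D_{i-1}$. The inductive reduction to a smooth-factor situation at each point outside $D_0$ is exactly what upgrades those inequalities into the required equalities, while the separate treatment of $D_0$ via $\tilde{D}_0 = \varnothing$ accounts for the only remaining locus.
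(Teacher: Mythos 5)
Your proposal is correct and follows essentially the same route as the paper: the forward implication by combining the criteria of Theorem \ref{ThGeomEH}(3) and Theorem \ref{ThCaractSH}(2)--(3), and the converse by induction on the ambient dimension using $\tilde{D}_0=\varnothing$ on $D_0$ and the smooth-factor reduction of Proposition \ref{PropD}(3) together with Proposition \ref{PropProductSH} outside $D_0$. The paper merely compresses the inductive step by citing the strategy of Remark \ref{RmkProductInduction}, which you have spelled out explicitly.
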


\begin{proof}    
    These properties being local, we may assume $D$ is a germ of divisor in $(\C^n,0)$. If $D$ is strongly Euler-homogeneous, then $\tilde{D}_0 = \varnothing$ and $D_{i-1} = \tilde{D}_i$ for all $i=1, \ldots, n-1$. Since $D$ is also Saito-holonomic, $\dim D_i \leq i$ for all $i=0, \ldots, n-3$, so $\dim \tilde{D}_i = \dim D_{i-1} \leq i-1$ for all $i=1, \ldots, n-2$ and $D$ is strongly Saito-holonomic. 
    
    To prove the converse, we will proceed by induction on $n$. For $n=2$, by Theorem \ref{ThCaractSH}, a divisor is strongly Saito-holonomic if and only if $\tilde{D}_0 = \varnothing$. By Theorem \ref{ThGeomEH}, this happens if and only if it is strongly Euler-homogeneous. Suppose the result is true for $n-1$ and consider a strongly Saito-holonomic divisor $D$ in $(\C^n, 0)$. Since strong Saito-holonomicity is well-behaved with respect to smooth factors (Proposition \ref{PropProductSH}), by Remark \ref{RmkProductInduction}, we only have to show that $\tilde{D}_0 = \varnothing$. But this is guaranteed for a strongly Saito-holonomic divisor by Theorem \ref{ThCaractSH}.
 
\end{proof}

\begin{Rmk}
There are plenty of results in \cite{Uli17} in which the divisor is assumed to be Saito-holonomic and strongly Euler-homogeneous. By Theorem \ref{ThSHandSEH}, all of them can be reformulated in terms of strong Saito-holonomicity.
\end{Rmk}

In the free case, it is known that being of linear Jacobian type is equivalent to being strongly Koszul-free \cite[Proposition 1.11]{NarvDual}. In the non-free case, we only have the following:

\begin{Prop}
    Let $D \subset X$ be a divisor of linear Jacobian type. Then, it is strongly Saito-holonomic.
\end{Prop}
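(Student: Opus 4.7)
The plan is to work locally at an arbitrary $p \in D$ in the graded ring $A := \gr \D_{X,p}[s] \cong \hol_{X,p}[\underline{\xi},s]$. By the definition of strong Saito-holonomicity, it suffices to show
$$\htt\bigl(f,\,\sigma(\delta_1)-\alpha_1 s,\ldots,\sigma(\delta_m)-\alpha_m s\bigr) = n+1$$
in $A$. The linear Jacobian type hypothesis identifies the ideal $(\sigma(\delta_1)-\alpha_1 s,\ldots,\sigma(\delta_m)-\alpha_m s)$ with $\ker\varphi$; combined with the fact, already recorded before Theorem \ref{ThCaractSH}, that $\ker\varphi$ is a prime ideal of $A$ of height $n$, the task reduces to showing $\htt((f)+\ker\varphi)=n+1$.

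For the lower bound I would observe that $f\notin\ker\varphi$. Since $\varphi$ is graded in $\underline{\xi},s$ and its restriction to the degree-zero part $\hol$ is the identity, $\ker\varphi$ contains no nonzero element of $\hol$, and in particular not $f$. Because $\ker\varphi$ is prime, any minimal prime over $(f)+\ker\varphi$ strictly contains $\ker\varphi$ and therefore has height at least $n+1$.

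For the upper bound, I would localize at the maximal ideal $M=(\mm_{X,p},\underline{\xi},s)$ of $A$, which contains $(f)+\ker\varphi$. The ring $A$ is a polynomial extension of the regular local ring $\hol_{X,p}$ and is therefore regular; hence $A_M$ is a regular (in particular Cohen-Macaulay) local ring of dimension $2n+1$, and $\ker\varphi\cdot A_M$ remains a prime ideal of height $n$. In the Noetherian local domain $A_M/\ker\varphi\cdot A_M$ the class of $f$ is a nonzero non-unit, so Krull's principal ideal theorem forces every minimal prime over its image to have height exactly $1$. Translating back through the dimension formula $\htt Q = \htt(\ker\varphi\cdot A_M) + \htt(Q/\ker\varphi\cdot A_M)$, available because $A_M$ is CM (hence catenary and equidimensional), every minimal prime over $((f)+\ker\varphi)\cdot A_M$ has height $n+1$, giving $\htt((f)+\ker\varphi)\le n+1$ and completing the proof.

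The step I expect to be most delicate is the additivity of heights along the chain $\ker\varphi\subsetneq Q$, which can fail in arbitrary Noetherian rings; my route handles this by passing to the regular local ring $A_M$, where all dimension-theoretic identities are standard. A natural alternative would be to apply Proposition \ref{PropHeight} and transplant the computation into the CM local ring $B=\C\{\underline{x},\underline{\xi},s\}$, but then one loses the primeness of $\ker\varphi\cdot B$ under extension and is forced into a geometric verification that no minimal prime of $\ker\varphi\cdot B$ contains $f$. Staying inside $A$ and exploiting primeness of $\ker\varphi$ itself is substantially cleaner.
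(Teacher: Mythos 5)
Your proof is correct and follows essentially the same route as the paper: use linear Jacobian type to identify $(\sigma(\delta_1)-\alpha_1 s,\ldots,\sigma(\delta_m)-\alpha_m s)$ with the prime $\ker\varphi$ of height $n$, note $f\notin\ker\varphi$, and conclude $\htt\bigl((f)+\ker\varphi\bigr)=n+1$. The only difference is that you spell out the upper bound $\le n+1$ (via localization at $M$, Krull's principal ideal theorem, and height additivity in the Cohen--Macaulay local ring $A_M$), which the paper leaves implicit.
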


\begin{proof}
    We may suppose $D$ is a germ of divisor in $(\C^n,0)$. As $D$ is of linear Jacobian type, we have that $I:=\ker^{(1)} \varphi = \ker \varphi$. But $\ker \varphi$ is a prime ideal of height $n$ and $f \notin \ker \varphi$, so $\htt (I + (f)) = \htt (f, \sigma(\delta_1)-\alpha_1 s, \ldots, \sigma(\delta_m)-\alpha_m s) = n+1$. Thus, $D$ is strongly Saito-holonomic. 
    
\end{proof}

\begin{Ex}
    The converse is not true. Consider the hyperplane arrangement $D=V(f)$, where $f=xyz(x+t)(y+t)(z+t)(x+y+t)(x+z+t)(y+z+t)$ in $(\C^4,0)$. It is well-known that all hyperplane arrangements are Saito-holonomic and obviously strongly Euler-homogeneous. Thus, they are strongly Saito-holonomic by Theorem \ref{ThSHandSEH}. However, $D$ is an example of a hyperplane arrangement that is not of differential linear type (that is, the $\D[s]$-annihilator of $f^s$ is not generated by operators of order one) \cite[Example 5.7]{Uli17}, so it cannot be of linear Jacobian type \cite[Proposition 1.15]{CN09}.
\end{Ex}

Nevertheless, if we impose an extra condition on $\ker^{(1)} \varphi$, we have the converse:

\begin{Prop}
\label{PropUnmixedLJT}
    Let $D$ be a germ of strongly Saito-holonomic divisor in $(\C^n,0)$. If $\ker^{(1)} \varphi$ is unmixed (that is, all  its associated primes have the same height), then $D$ is of linear Jacobian type.
\end{Prop}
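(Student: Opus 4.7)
The plan is to invert $f$ in the polynomial ring $R := \hol[\xi_1, \ldots, \xi_n, s]$, show that $\ker\varphi$ and $\ker^{(1)}\varphi$ coincide after this localization, and then use that $f$ is a non-zero divisor modulo $\ker^{(1)}\varphi$ to transfer the equality back to $R$. Writing $I := \ker^{(1)}\varphi$ and $K := \ker\varphi$ for brevity, we are given $I \subseteq K$ with $K$ prime of height $n$, and the goal is $I = K$.

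First I would pin down $\htt I = n$: the bound $\htt I \leq n$ is immediate from $I \subseteq K$, while strong Saito-holonomicity provides $\htt(I+(f)) = n+1$, forcing $\htt I \geq n$ by Krull's principal ideal theorem. The unmixedness hypothesis then says every associated prime $\mathfrak{p}$ of $I$ has $\htt \mathfrak{p} = n$; if $f$ lay in some such $\mathfrak{p}$, then $I+(f) \subseteq \mathfrak{p}$ would force $\htt(I+(f)) \leq n$, a contradiction. So $f$ avoids every associated prime of $I$ and is a non-zero divisor modulo $I$.

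Next I would localize at $f$. Since $f \in \mathcal{J}_f$ becomes a unit in $\hol_f$, the ideal $\mathcal{J}_f$ extends to the unit ideal, so $\Rees(\mathcal{J}_f)_f = \hol_f[t]$. The induced map $\varphi_f \colon \hol_f[\underline{\xi}, s] \to \hol_f[t]$ sends $\xi_i \mapsto \partial_i(f)\,t$ and $s \mapsto ft$. Because $ft$ is algebraically free over $\hol_f$ and $\partial_i(f)\,t = (\partial_i(f)/f)(ft)$, a short substitution argument (equivalently, the change of variables $\eta_i := \xi_i - (\partial_i(f)/f)\,s$) shows that $\ker\varphi_f$ is generated by the $n$ elements $\xi_i - (\partial_i(f)/f)\,s$; multiplying by the unit $f$, these become $f\xi_i - \partial_i(f)\,s$, which are the tautological degree-one syzygies of $(\partial_1 f, \ldots, \partial_n f, f)$ and hence lie in $I$. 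This yields $K R_f = I R_f$.

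Finally, the non-zero divisor property from the second step makes the localization map $R/I \hookrightarrow R_f/I R_f$ injective, and the image of $K/I$ under this map equals $K R_f/I R_f = 0$; therefore $K/I = 0$, i.e.\ $K = I$. I expect the only real obstacle to be the third paragraph: recognizing that inverting $f$ collapses the Rees algebra to a polynomial ring and checking that the obvious linear syzygies already generate the localized kernel. Everything else is routine, with the unmixedness hypothesis doing the essential work in the last step by guaranteeing the $f$-torsion freeness of $R/I$ that allows the local equality to propagate back globally.
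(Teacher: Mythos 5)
Your proof is correct and keeps the paper's skeleton: everything is reduced to showing that $f$ is a nonzerodivisor modulo $I=\ker^{(1)}\varphi$, and that step is handled exactly as in the paper, via unmixedness together with $\htt(I+(f))=n+1$. Where you genuinely diverge is in proving that $\ker\varphi/\ker^{(1)}\varphi$ is $f$-power torsion: the paper disposes of this in one line by quoting that a divisor is of linear Jacobian type at its smooth points, so the quotient is supported on $\Sing D$ and each element is killed by a power of $f$; you instead prove the weaker but sufficient statement that inverting $f$ collapses $\Rees(\mathcal{J}_f)$ to $\hol_f[t]$, so that $(\ker\varphi)_f$ is generated by the tautological syzygies $f\xi_i-\partial_i(f)s$, which are degree-one elements of $\ker\varphi$ and hence lie in $\ker^{(1)}\varphi$ by its very definition. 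That localization computation is valid and has the merit of being self-contained, at the price of only giving support in $V(f)$ rather than in $\Sing D$ (which is all one needs here). Two minor remarks: your derivation of $\htt I\geq n$ from $\htt(I+(f))=n+1$ via Krull's principal ideal theorem tacitly uses that $\hol[\underline{\xi},s]$ is Cohen--Macaulay (to pass from $\htt(\mathfrak{p}/\mathfrak{q})\leq 1$ to $\htt\mathfrak{p}\leq\htt\mathfrak{q}+1$); the paper avoids this by reading $\htt I=n$ off the definition of weak Saito-holonomicity, and in fact only the inequality $\htt I\leq n$, which you already get from $I\subseteq\ker\varphi$, is needed for the contradiction, so that step of yours could simply be dropped.
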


\begin{proof}
    As $D$ is strongly Saito-holonomic, in particular, it is weakly Saito-holonomic. Thus, $\htt (f, \sigma(\delta_1)-\alpha_1 s, \ldots, \sigma(\delta_m)-\alpha_m s) = n+1$ and $\htt (\sigma(\delta_1)-\alpha_1 s, \ldots, \sigma(\delta_m)-\alpha_m s) = n$. That is, $\htt ((f)+I) = n+1$ and $\htt I = n$, where $I=\ker^{(1)} \varphi$. A divisor is always of linear Jacobian type at smooth points, so the quotient $\ker \varphi/\ker^{(1)} \varphi$ is supported at the singular locus. In particular, for any $a \in \ker \varphi$, there exists $k \geq 1$ such that $f^k a \in I$. Thus, it remains to prove that $f$ is not a zero divisor modulo $I$. If that were the case, then $f$ would lie in some associated prime $\mathfrak{p}$ of $I$. In particular, $(f)+I \subset \mathfrak{p}$. But then $ \htt \mathfrak{p} \geq \htt ((f)+I) = n+1$, which is a contradiction with the fact that $I$ is unmixed.
    
\end{proof}

\begin{Rmk}
$ $
\begin{itemize}
    \item[(a)] The unmixedness theorem, which holds in any Cohen-Macaulay ring, states that any ideal of height $h$ that can be generated by $h$ elements is unmixed \cite[16.C]{MatsumuraRosa}. Thus, Proposition \ref{PropUnmixedLJT} applies when $\ker^{(1)} \varphi$ can be generated by $n$ elements. This is what happens in the free case.

    This result suggests the introduction of stronger versions of the Saito-holonomic type properties, in which we also require that the ideal in question is unmixed. Those notions would coincide with the previous ones for free divisors, for which the unmixedness hypothesis is automatic. However, we have not explored the relations between them.
    
    \item[(b)] Proposition \ref{PropUnmixedLJT} complements \cite[Corollary 3.23]{Uli17} where, instead of unmixedness, tameness is assumed.
    
\end{itemize}
    
\end{Rmk}

Let us revisit Examples \ref{ExSEH} and \ref{ExNotSEH} to study their Saito-holonomic type properties:

\begin{Ex}
Let $D = V(f)$ in $(\C^4,0)$ with $f=xy(x+z)(x^2+yz)(z+yt)$. We have that

$$\begin{array}{l}

    \tilde{D}_0 = \varnothing, \\[0,2 cm]

    \tilde{D}_1 = D_0 = V(x,y,z), \\[0,2 cm]

    \tilde{D}_2 = D_1 = V(x, z, yt). \\[0,2 cm]
\end{array}$$

As $\dim D_0 = 1$, $D$ is not Saito-holonomic. Consequently, it is not strongly Saito-holonomic. However, $\dim \tilde{D}_i \leq i$ for $i = 0,1$, so it is weakly Saito-holonomic.

Recall that $D$ is strongly Euler-homogeneous. This example tells us that we need to demand Saito-holonomicity in Theorem \ref{ThSHandSEH}. Weak Saito-holonomicity is not enough to guarantee the equivalence between strong Euler-homogeneity and strong Saito-holonomicity.

\end{Ex}

\begin{Ex}
Let $D = V(f)$ in $(\C^4,0)$ with $f=xy(x+z)(x^2+yz)(z^2+yt)$. We have that:

$$\begin{array}{l}

    \tilde{D}_0 = \varnothing, \\[0,2 cm]
    \tilde{D}_1 = V(x,y,z), \hspace{0,3 cm} D_0 = (x,y,z,t), \\[0,2 cm]
    \tilde{D}_2 = D_1 = V(t(y+z), t(x+z), z(z-t), z(y+t), z(x+t), xy - zt).
\end{array}$$

As $\dim D_i = i$ for $i=0,1$, $D$ is Saito-holonomic (and, therefore, also weakly Saito-holonomic). Since $D$ is not strongly Euler-homogeneous, Theorem \ref{ThSHandSEH} tells us that $D$ is not strongly Saito-holonomic either. And, indeed, we confirm this by noticing that $\dim \tilde{D}_1 = 1 > 0$. \\

\end{Ex}

\section{Extending the criteria to formal power series}
\label{SectionConvToFormal}

The objective of this section is to extend the characterizations developed in previous sections, in which the Saito matrices play an important role, to the case of formal power series. This will be useful to prove the main results of the next section.

Let $f \in \hol$. Take a generating set $\mathcal{T}=\{\hat{\delta}_1, \ldots, \hat{\delta}_m\}$ of $\what{\Der}_f$ and let $B = (\hat{\delta}_i(x_j))$ be the (formal) Saito matrix with respect to $\mathcal{T}$. Let $\hat{\alpha}_i \in \what{\hol}$ be such that $\hat{\delta}_i(f) = \hat{\alpha}_i f$ and define similarly $\tilde{B} = \left(B \hspace{0,1 cm}\Big| -\overline{\hat{\alpha}}\right)$, the (formal) extended Saito matrix with respect to $\mathcal{T}$ and $f$. 

Consider the formal Jacobian ideal $\what{\mathcal{J}}_f = \what{\hol}(\partial_1(f), \ldots, \partial_n(f), f)$. Let $J_i = \Fitt_{n-i}(\what{\mathcal{J}}_f/(f))$. As in the convergent case, a free presentation of $\what{\mathcal{J}}_f/(f)$ as an $\what{\hol}$-module is given by the matrix $B$, so $J_i$ is the ideal of $\what{\hol}$ generated by the minors of order $i$ of $B$ (for $1 \leq i \leq n$). Similarly, if we set $\tilde{J}_i = \Fitt_{n+1-i}(\what{\mathcal{J}}_f)$, then $\tilde{J}_i$ is the ideal of $\what{\hol}$ generated by the minors of order $i$ of $\tilde{B}$.

Once more, these ideals do not depend on the choice of basis, coordinates or equation. Note that any convergent generating set $\mathcal{S}$ of $\Der_f$ is still an $\what{\hol}$-generating set of $\what{\Der}_f$, so we deduce that $J_i$ and $\tilde{J}_i$ are also the ideals of $\what{\hol}$ generated by the minors of order $i$ of the Saito matrices with respect to $\mathcal{S}$ and $f$. That is, $J_i = I_i^e$ and $\tilde{J}_i = \tilde{I}_i^e$. 

In particular, by $(3)$ in Proposition \ref{PropDIrel}, we have that $\sqrt{J_{n-1}} = \sqrt{\tilde{J}_n} = \sqrt{\what{\mathcal{J}}_f}$.

Now, we can state the algebraic criterion for strong Euler-homogeneity in terms of $J_i$ and $\tilde{J}_i$:

\begin{Prop}[Formal criterion for strong Euler-homogeneity]
\label{PropFormalAlgEH}
    Let $D = V(f)$ be a germ of divisor in $(\C^n,0)$ with $n \geq 2$. Then:

    \begin{itemize}
        \item[(1)] $D$ is strongly Euler-homogeneous on $D_0$ if and only if $\tilde{J}_1 = \what{\hol}$.
        \item[(2)] $D$ is strongly Euler-homogeneous on $D \setminus D_0$ if and only if $\sqrt{\tilde{J}_{i+1}} = \sqrt{J_i}$ \text{ for } $i=1, \ldots, n-2$.
        \item[(3)] $D$ is strongly Euler-homogeneous if and only if $\tilde{J}_1 = \what{\hol}$ and $\sqrt{\tilde{J}_{i+1}} = \sqrt{J_i}$ \text{ for } $i=1, \ldots, n-2$.
    \end{itemize}
\end{Prop}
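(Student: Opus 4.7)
The plan is to deduce Proposition \ref{PropFormalAlgEH} directly from its convergent counterpart Corollary \ref{CorAlgEH} by exploiting the fact that $\hol \hookrightarrow \what{\hol}$ is a faithfully flat extension (since $\what{\hol}$ is the $\mm$-adic completion of the Noetherian local ring $\hol$). Combined with the extension identities $J_i = I_i^e$ and $\tilde{J}_i = \tilde{I}_i^e$ already recorded just before the statement, it suffices to show that the conditions appearing in Corollary \ref{CorAlgEH} pass to and from their extended analogues in $\what{\hol}$.

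The unit-ideal equivalence $\tilde{I}_1 = \hol \iff \tilde{J}_1 = \what{\hol}$ follows at once from faithful flatness via the relation $I = I^{ec}$ for every ideal $I \subset \hol$: if $\tilde{J}_1 = \tilde{I}_1^e = \what{\hol}$, contracting yields $\tilde{I}_1 = \hol$, and the converse direction is trivial. This takes care of part (1).

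For parts (2) and (3), the key will be the following radical-preservation lemma: for any two ideals $I, J \subset \hol$, one has $\sqrt{I} = \sqrt{J}$ if and only if $\sqrt{I^e} = \sqrt{J^e}$. The forward implication is immediate, since the inclusions $I^n \subset J$ and $J^m \subset I$ extend to $\what{\hol}$. For the converse, from $(I^e)^n \subset J^e$ I would contract both sides: faithful flatness provides $I^n = (I^n)^{ec}$ and $(J^e)^c = J$, yielding $I^n \subset J$, and symmetrically $J^m \subset I$, so $\sqrt{I} = \sqrt{J}$. Applying this lemma to the pairs $(I_i, \tilde{I}_{i+1})$ for $i = 1, \ldots, n-2$ translates each radical condition of Corollary \ref{CorAlgEH} directly into its formal counterpart, finishing the proof.

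The argument is essentially a formal translation through faithful flatness, with the radical-preservation lemma doing all the work. I do not foresee any substantial obstacle; the only subtlety is checking that radicals are preserved under the completion map, which is a routine consequence of $I = I^{ec}$ and hence of faithful flatness.
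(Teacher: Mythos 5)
Your proposal is correct and follows essentially the same route as the paper: the paper's proof also reduces to Corollary~\ref{CorAlgEH} via the identifications $J_i = I_i^e$, $\tilde{J}_i = \tilde{I}_i^e$ and the fact that, by Noetherianity and faithful flatness of $\what{\hol}$ over $\hol$, $\sqrt{I}=\sqrt{J}$ if and only if $\sqrt{I^e}=\sqrt{J^e}$. You merely spell out the proof of that radical-preservation step (via $I^{ec}=I$), which the paper takes as standard.
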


\begin{proof}
    Due to faithful flatness of $\what{\hol}$ over $\hol$, given two ideals $I,J \subset \hol$, we have that $\sqrt{I} = \sqrt{J}$ if and only if $\sqrt{I^e} = \sqrt{J^e}$ in $\what{\hol}$. Thus, $\sqrt{\tilde{I}_{i+1}} = \sqrt{I_i}$ if and only if $\sqrt{\tilde{J}_{i+1}} = \sqrt{J_i}$. The result follows from Corollary \ref{CorAlgEH}.
    
\end{proof}

\begin{Rmk}
    As before, this criterion can be read in terms of Fitting ideals. We just need to replace $\mathcal{J}_f$ by $\what{\mathcal{J}}_f$ and $\mathcal{J}_f/(f)$ by $\what{\mathcal{J}}_f/(f)$ in Remark \ref{RmkSEHFitting}.
\end{Rmk}

The relevance of Proposition \ref{PropFormalAlgEH} is that, unexpectedly, formal power series, which only make sense at the origin, can be used to study strong Euler-homogeneity away from this point. Furthermore, the necessary condition for having strong Euler-homogeneity outside $D_0$ (Corollary \ref{CorAlphaDtilde0}) also has a formal analogue, although the proof is not so straightforward: 

\begin{Cor}
\label{CorSEHoutsideD0Formal}
    Let $D=V(f)$ be a germ of divisor in $(\C^n,0)$ that is strongly Euler-homogeneous outside $D_0$. Let $g$ be a ``formal equation'' of $D$ (i.e. a formal unit multiplied by $f$). Then $\sqrt{\tilde{J}_1} = \sqrt{\partial g : g}$.
\end{Cor}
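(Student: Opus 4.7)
The plan is to repeat the argument of Corollary \ref{CorAlphaDtilde0} inside $\what{\hol}$, with Proposition \ref{PropFormalAlgEH} taking the role of Corollary \ref{CorAlgEH}. The ingredients I will need are: the Fitting ideals $J_i$ and $\tilde{J}_i$ are intrinsic to $D$, so they can be computed from the matrices $B$ and $\tilde{B}'$ associated with the formal equation $g=uf$ and a generating set $\mathcal{T}=\{\hat{\delta}_1,\ldots,\hat{\delta}_m\}$ of $\what{\Der}_g=\what{\Der}_f$; the identifications $\sqrt{J_n}=(g)$ and $\sqrt{\tilde{J}_n}=\sqrt{J_{n-1}}=\sqrt{\what{\mathcal{J}}_f}$, which are the formal translations of Proposition \ref{PropD}(2) and Proposition \ref{PropDIrel}(3) via faithful flatness of $\what{\hol}$ over the excellent ring $\hol$; and a formal analogue of Lemma \ref{LemAlphaSubsetD} for $g$.

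For the formal analogue of Lemma \ref{LemAlphaSubsetD}, I invoke Scheja's integral dependence theorem for the reduced element $g$ in the regular local ring $\what{\hol}$: there exist $\hat{\eta}\in\what{\Der}_g$ and an integer $r\ge 1$ with $\hat{\eta}(g)=g^{r}$. Writing $\hat{\eta}=\sum c_i\hat{\delta}_i$ and applying it to $g$ gives $g^{r-1}\in(\hat{\alpha}^g_1,\ldots,\hat{\alpha}^g_m)=\partial g : g$, where $\hat{\delta}_i(g)=\hat{\alpha}^g_i\,g$. Apart from the trivial case $r=1$, any prime $p\supset\partial g : g$ then contains $g$ and hence $J_n$; since the last column of $\tilde{B}'$ lies entirely in $p$, the rank of $\tilde{B}'$ modulo $p$ equals that of $B$ and is therefore at most $n-1$, so $\tilde{J}_n\subset p$ and $p\in V(\tilde{J}_n)=V(\what{\mathcal{J}}_f)$.

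With this lemma in place, for any $p\in V(\partial g : g)$ we obtain $p\in V(J_{n-1})$, so $i_0:=\rank B\bmod p=\rank\tilde{B}'\bmod p\le n-2$. If $i_0\ge 1$, then $p\in V(\tilde{J}_{i_0+1})\setminus V(J_{i_0})$, contradicting the equality $\sqrt{\tilde{J}_{i_0+1}}=\sqrt{J_{i_0}}$ supplied by Proposition \ref{PropFormalAlgEH}(2). Hence $i_0=0$, which forces $\tilde{J}_1\subset p$, and therefore $V(\partial g : g)\subset V(\tilde{J}_1)$. The reverse inclusion is immediate because every $\hat{\alpha}^g_i$ appears as an entry of $\tilde{B}'$ and so lies in $\tilde{J}_1=\Fitt_n(\what{\mathcal{J}}_f)$; taking radicals then yields the claimed equality. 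The main obstacle I anticipate is precisely the formal version of Lemma \ref{LemAlphaSubsetD}: Scheja's theorem is stated in \cite{Scheja70} for convergent equations, but its proof is algebraic and applies in any regular local $k$-algebra, so it transfers to $\what{\hol}$; alternatively one can deduce the integral dependence of $g$ from that of $f$ using the formula $\hat{\alpha}^g_i=\hat{\alpha}^f_i+\hat{\delta}_i(u)u^{-1}$ recorded in the remark following Proposition \ref{PropDIrel}.
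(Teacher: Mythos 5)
Your argument is correct, but it reaches the hard inclusion $\sqrt{\tilde{J}_1}\subseteq\sqrt{\partial g:g}$ by a genuinely different mechanism than the paper. You transplant the pointwise proofs of Lemma \ref{LemAlphaSubsetD} and Corollary \ref{CorAlphaDtilde0} to $\operatorname{Spec}\what{\hol}$: for a prime $\mathfrak{p}\supseteq\partial g:g$ you compare the ranks of the Saito and extended Saito matrices modulo $\mathfrak{p}$, use $\sqrt{J_n}=(g)$ and $\sqrt{J_{n-1}}=\sqrt{\tilde{J}_n}=\sqrt{\what{\mathcal{J}}_f}$ to force that common rank down to at most $n-2$, rule out the range $1\le i_0\le n-2$ by Proposition \ref{PropFormalAlgEH}(2), and conclude via $\sqrt{I}=\bigcap_{\mathfrak{p}\supseteq I}\mathfrak{p}$; this is sound, since ``rank modulo $\mathfrak{p}$ at most $i$'' is exactly ``all $(i+1)$-minors lie in $\mathfrak{p}$''. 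The paper instead stays entirely at the level of ideals: starting from $\sqrt{J_n}\subseteq\sqrt{(\hat{\beta}_1,\ldots,\hat{\beta}_m)}$ it runs a descending induction using the inclusion $\tilde{J}_i\subseteq J_i+(\hat{\beta}_1,\ldots,\hat{\beta}_m)$ (as in Proposition \ref{PropDIrel}(1)) together with $\sqrt{J_i}=\sqrt{\tilde{J}_{i+1}}$ from Proposition \ref{PropFormalAlgEH}, never mentioning ranks or primes. The two routes are of comparable length; yours makes the formal statement literally parallel to the convergent Corollary \ref{CorAlphaDtilde0} (in effect proving $V(\partial g:g)=V(\tilde{J}_1)$ prime by prime), while the paper's induction only manipulates the ideals already introduced in Section \ref{SectionConvToFormal}. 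Both hinge on the same delicate input, the validity of Scheja's theorem for the formal equation $g$, and your primary justification (the proof is algebraic and works in $\what{\hol}$) is essentially the paper's (convergent series for the trivial metric are formal series). One caveat: your fallback suggestion of deducing the integral dependence of $g$ from that of $f$ via the formula $\hat{\alpha}^g_i=\hat{\alpha}^f_i+\hat{\delta}_i(u)u^{-1}$ does not work as stated; that formula only relates generators of the two colon ideals, which are genuinely different ideals, and it yields no integral equation of $g$ over $(\partial_1 g,\ldots,\partial_n g)$, so the transfer would need a separate argument (e.g.\ a valuative criterion), at which point invoking the formal Scheja statement directly is simpler. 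Since this was offered only as an alternative, your main proof stands.
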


\begin{proof}

Let $\{\hat{\eta}_1, \ldots, \hat{\eta}_m\}$ be a generating set of $\what{\Der}_f$ such that $\hat{\eta}_i(g) = \hat{\beta}_i g$ for all $i=1, \ldots, m$, so that $\partial g : g = (\hat{\beta}_1, \ldots, \hat{\beta}_m)$. Recall that the $J_i$ (resp. $\tilde{J}_i$) do not depend on the chosen presentation, so these ideals are the ones generated by the minors of order $i$ of the Saito matrix with respect to $\{\hat{\eta}_1, \ldots, \hat{\eta}_m\}$ (resp. the extended Saito matrix with respect to $\{\hat{\eta}_1, \ldots, \hat{\eta}_m\}$ and $g$). Thus, $(\hat{\beta}_1, \ldots, \hat{\beta}_m) \subset \tilde{J}_1$ and it is clear that $\sqrt{(\hat{\beta}_1, \ldots, \hat{\beta}_m)} \subset \sqrt{\tilde{J}_1}$. 

Let us prove the other inclusion. The proof starts as in Lemma \ref{LemAlphaSubsetD}: $g$ is integral over the ideal of its partial derivatives (convergent series with the trivial metric are exactly formal power series, so the proof of \cite[Satz 5.2]{Scheja70} is still valid). Thus, either some $\hat{\beta}_i$ is a unit or a power of $g$ belongs to $(\hat{\beta}_1, \ldots, \hat{\beta}_m)$. In the first case, it is clear that $(\hat{\beta}_1, \ldots, \hat{\beta}_m) = \tilde{J}_1 = \what{\hol}$ and we are done. In the second one, as $D_{n-1} = D$, $\sqrt{I_n} = \sqrt{(f)}$ and $\sqrt{J_n} = \sqrt{I_n^e} = \sqrt{(f)^e} = \sqrt{(g)} \subset \sqrt{(\hat{\beta}_1, \ldots, \hat{\beta}_m)}$.

Note that, reasoning as in $(1)$ in Proposition \ref{PropDIrel}, we have $\tilde{J}_i \subset J_i + (\hat{\beta}_1, \ldots, \hat{\beta}_m)$, so $\sqrt{\tilde{J}_i} \subset \sqrt{J_i + (\hat{\beta}_1, \ldots, \hat{\beta}_m)} = \sqrt{\sqrt{J_i} + \sqrt{(\hat{\beta}_1, \ldots, \hat{\beta}_m)}}$ for all $i$. In particular, as $\sqrt{J_n} \subset \sqrt{(\hat{\beta}_1, \ldots, \hat{\beta}_m)}$, we also have $\sqrt{\tilde{J}_n} \subset \sqrt{(\hat{\beta}_1, \ldots, \hat{\beta}_m)}$. But, as $D$ is strongly Euler-homogeneous outside $D_0$, we have that $\sqrt{J_i} = \sqrt{\tilde{J}_{i+1}}$ for $i \leq n-1$ (Proposition \ref{PropFormalAlgEH}). This implies $\sqrt{J_{n-1}} = \sqrt{\tilde{J}_n} \subset \sqrt{(\hat{\beta}_1, \ldots, \hat{\beta}_m)}$ and so

$$\sqrt{\tilde{J}_{n-1}} \subset \sqrt{\sqrt{J_{n-1}} + \sqrt{(\hat{\beta}_1, \ldots, \hat{\beta}_m)}} \subset \sqrt{(\hat{\beta}_1, \ldots, \hat{\beta}_m)}.$$

By reverse induction, we finally get $\sqrt{\tilde{J}_1} \subset \sqrt{(\hat{\beta}_1, \ldots, \hat{\beta}_m)}$. 
    
\end{proof}

\begin{Rmk}
    As in the convergent case, we deduce that, when $D$ is strongly Euler-homogeneous outside $D_0$, the radical of the ideal $\partial g : g$ does not depend on the choice of formal equation $g$.
\end{Rmk}

We have argued that the Fitting ideals of the $\what{\hol}$-modules $\what{\mathcal{J}}_f$ and $\what{\mathcal{J}}_f/(f)$ are the extended ones of those of $\mathcal{J}_f$ and $\mathcal{J}_f/(f)$. Since the height of the extension is the same, we deduce from Corollary \ref{CorAlgCaractSH} that we can also check (weak or strong) Saito-holonomicity using formal data:

\begin{Cor}
\label{CorFormalAlgCaractSH}
    Let $D = V(f)$ be a germ of divisor in $(\C^n,0)$. Then:

    \begin{itemize}
        \item[(1)] $D$ is \emph{weakly Saito-holonomic} if and only if $\hspace{0,1 cm} \htt \Fitt_i(\what{\mathcal{J}}_f) \geq i$ for all $i=3, \ldots, n$.

        \item[(2)] $D$ is \emph{Saito-holonomic} if and only if $\hspace{0,1 cm} \htt \Fitt_i(\what{\mathcal{J}}_f/(f)) \geq i+1$ for all $i=2, \ldots, n-1$.

        \item[(3)] $D$ is \emph{strongly Saito-holonomic} if and only if $\hspace{0,1 cm} (\tilde{J}_1 = ) \Fitt_n(\what{\mathcal{J}}_f) = \what{\hol}$ and $\htt \Fitt_i(\what{\mathcal{J}}_f) \geq i+1$ for all $i=2, \ldots, n-1$.
        
    \end{itemize}
    
\end{Cor}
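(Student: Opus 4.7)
The plan is to reduce the formal statement directly to the convergent version already proved in Corollary \ref{CorAlgCaractSH}, using two ingredients the author has essentially laid out: first, that Fitting ideals commute with extension of scalars along $\hol \hookrightarrow \what{\hol}$; and second, that $\hol \hookrightarrow \what{\hol}$ is a faithfully flat extension of Noetherian local rings, hence preserves heights of ideals.

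First I would record the equalities $\Fitt_i(\what{\mathcal{J}}_f) = \Fitt_i(\mathcal{J}_f)^e$ and $\Fitt_i(\what{\mathcal{J}}_f/(f)) = \Fitt_i(\mathcal{J}_f/(f))^e$ for all $i$. This is exactly the content of the author's earlier remark that $J_i = I_i^e$ and $\tilde{J}_i = \tilde{I}_i^e$, and it follows formally from the fact that the convergent free presentation of $\mathcal{J}_f$ (resp.\ $\mathcal{J}_f/(f)$) by the matrix $\tilde{A}$ (resp.\ $A$) becomes a free presentation of the formal module after tensoring with $\what{\hol}$, combined with the fact that Fitting ideals are computed from any free presentation.

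Next I would invoke faithful flatness: for any ideal $I \subset \hol$ one has $\htt I = \htt I^e$ by \cite[(13.B.3)]{MatsumuraRosa}, and similarly $I = \hol$ if and only if $I^e = \what{\hol}$. Applying this to $I = \Fitt_i(\mathcal{J}_f)$ and $I = \Fitt_i(\mathcal{J}_f/(f))$, the inequalities $\htt \Fitt_i(\mathcal{J}_f) \geq i$, $\htt \Fitt_i(\mathcal{J}_f/(f)) \geq i+1$, and the equality $\Fitt_n(\mathcal{J}_f) = \hol$ appearing in Corollary \ref{CorAlgCaractSH} are each equivalent to their formal counterparts. Substituting these equivalences into parts $(1)$, $(2)$, $(3)$ of Corollary \ref{CorAlgCaractSH} yields exactly the three statements of the corollary.

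There is no real obstacle here; the result is essentially a translation, and the author already did the work of checking that Fitting ideals in the formal setting are the extensions of the convergent ones. The only thing worth being careful about is that the characterization being translated is stated as an equivalence with a global property of $D$ (weak/Saito/strong Saito-holonomicity), which is defined intrinsically and does not a priori refer to convergent or formal data; so the content of the corollary is that the formal Fitting data suffice to detect it, and this is immediate once one observes that the heights agree on both sides.
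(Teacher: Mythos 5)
Your proposal is correct and follows essentially the same route as the paper: the author's justification is precisely that the formal Fitting ideals are the extensions of the convergent ones ($J_i = I_i^e$, $\tilde{J}_i = \tilde{I}_i^e$), and that heights (and being the unit ideal) are preserved under the faithfully flat extension $\hol \hookrightarrow \what{\hol}$, so the statements of Corollary \ref{CorAlgCaractSH} translate verbatim to formal data. No gaps to report.
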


\vspace{0,2 cm}

\section{Proving new cases of Conjecture \ref{ConjLCT-SEH}}
\label{SectionConj}

In this section we prove Conjecture \ref{ConjLCT-SEH} in three different cases: when the divisor is strongly Euler-homogeneous everywhere except on a discrete set of points, when the divisor is weakly Koszul-free and when the ambient dimension is four.

\subsection{Assuming strong Euler-homogeneity outside a discrete set of points}
\label{SubsectionPuncturedBall}

In general, there is no hope that strong Euler-homogeneity outside a point implies that the point also has this property. In order to give a counterexample, note that a divisor is always strongly Euler-homogeneous at smooth points (because a local equation is $x_1 = 0$, which verifies $x_1 \partial_1 (x_1) = x_1$). So we just need to consider an isolated singularity that is not quasihomogeneous (and so not strongly Euler-homogeneous by \cite[Satz 4.1]{Saito71}) at the origin. Take, for example, $V(x^4+y^5+y^4x) \subset \C^2$. However, we will see that, for free divisors and under mild assumptions, this becomes true.

\begin{Th}
\label{ThSEHoutside0}
    Let $D=V(f)$ be a germ of free divisor in $(\C^n,0)$ that is strongly Euler-homogeneous outside the origin. Then, the following are equivalent:

    \begin{itemize}
        \item[(1)] $D$ is strongly Euler-homogeneous at the origin.
        \item[(2)] There exists a non-topologically nilpotent singular derivation in $\what{\Der}_f$.
        \item[(3)] There exists a non-zero semisimple derivation in $\what{\Der}_f$.
    \end{itemize}
\end{Th}

\begin{proof}
$ $ \\[0,2 cm]
    $(1) \Rightarrow (2)$: If $D$ is strongly Euler-homogeneous at $0$, then there exists a singular derivation $\delta \in \what{\Der}_f$ such that $\delta(f) = f$. By statement $(2)$ in Proposition \ref{PropSNdVR24}, $\delta$ cannot be topologically nilpotent and we are done.

    \vspace{0,2 cm}

    \noindent $(2) \Rightarrow (3)$: If there exists a non-topologically nilpotent singular derivation $\delta \in \what{\Der}_f$, in particular, its semisimple part $\delta_S$ is not zero and it is still logarithmic (Proposition \ref{PropSNdVR24}, $(3)$).

    \vspace{0,2 cm}
    
    \noindent $(3) \Rightarrow (1)$: Suppose $D$ is not strongly Euler-homogeneous at $0$. In particular, it cannot be a product at $0$ (otherwise, it would be non-strongly Euler-homogeneous on a set of positive dimension), so $\what{\Der}_f \subset \mm \what{\Der}$ by $(3)$ in Proposition \ref{PropForm}. On the other hand, by Proposition  \ref{PropRank}, the origin must be the only point $p$ at which $\rank A(p) = \rank \tilde{A}(p) = 0$. That is, $\tilde{D}_0 = \{0\}$ and $\mm = \sqrt{\tilde{I}_1}$. Taking extensions, we have $\what{\mm} = \sqrt{\tilde{J}_1}$. 

      If there exists a non-zero semisimple derivation $\delta \in \what{\Der}_f$, then its class in $\what{\Der}_f/\mm \what{\Der}_f$ is non-zero. Indeed, if $\delta \in \mm \what{\Der}_f \subset \mm^2 \what{\Der}$, then $\delta$, not having linear part, would also be topologically nilpotent by $(1)$ in Proposition \ref{PropSNdVR24}, something that can only happen if $\delta = 0$ by uniqueness. 
      
      Thus, by Nakayama's Lemma, we can construct a formal basis $\{\delta_1, \ldots, \delta_n\}$ of $\what{\Der}_f$ with $\delta_1 = \delta$. Now, we can find a formal unit $u$ such that, for $g=uf$, we have $\delta_1(g) = \beta g$ with $\beta \in \C$ (Proposition \ref{PropSNdVR24}, $(4)$). Since we are assuming $D$ is not strongly Euler-homogeneous at the origin, $\beta = 0$ (otherwise, $\tilde{J}_1 = \what{\hol}$ and $D$ would be strongly Euler-homogeneous at $0$ by Proposition \ref{PropFormalAlgEH}). Let $\beta_i \in \what{\hol}$ be such that $\delta_i(g) = \beta_i g$ (note that $\beta_1 = \beta = 0$). By Corollary \ref{CorSEHoutsideD0Formal}, we have $\what{\mm} = \sqrt{\tilde{J}_1} =$ $\sqrt{(\beta_1, \ldots, \beta_n)} = \sqrt{(\beta_2, \ldots, \beta_n)}$, but this is impossible since $\htt \what{\mm} = n$ and $ \htt \sqrt{(\beta_2, \ldots, \beta_n)} = \htt (\beta_2, \ldots, \beta_n) \leq n-1$.
    
\end{proof}

\begin{Rmk}
$ $
\begin{itemize}
    \item[(a)] In particular, if $D$ is a non-quasihomogeneous germ of plane curve (which is an isolated singularity), then all its logarithmic derivations are topologically nilpotent. This generalizes \cite[Proposition 5.2]{dVR24}, where we proved that, for plane curves $D=V(f)$ with $f \in \mm^3$, non-Euler logarithmic derivations (those $\delta$ for which $\delta(f) \in \what{\mm} f$) are topologically nilpotent.
    
    \item[(b)] As we said before, $D=V(f)$ with $f=x^4+y^5+y^4x$ is a non-quasihomogeneous plane curve. A basis of $\Der_f$ is:

    $$\begin{array}{l}

    \delta_1 = (4x^2+5xy) \partial_x + (3xy+4y^2) \partial_y, \\

    \delta_2 = (16xy^2+4y^3-125xy) \partial_x + (12y^3-4x^2+5xy-100y^2) \partial_y. \\

    \end{array}$$
    
    And, indeed, as they have no linear part, every logarithmic derivation is topologically nilpotent.
\end{itemize}
\end{Rmk}

A direct consequence of Theorem \ref{ThSEHoutside0} is that a weak version of Conjecture \ref{ConjLCT-SEH} holds in arbitrary dimension:

\begin{Th}
\label{ThConjOutsideDiscreteSet}
    Let $D \subset X$ be a free divisor in a complex analytic manifold of dimension $n$ satisfying LCT and being strongly Euler-homogeneous outside a discrete set of points. Then $D$ is strongly Euler-homogeneous.
\end{Th}

\begin{proof}
    As LCT and strong Euler-homogeneity are local properties, we may assume that $D$ is a germ of divisor in $(\C^n,0)$ satisfying LCT and being strongly Euler-homogeneous outside $0$. Suppose $D$ is not strongly Euler-homogeneous at the origin. By Theorem \ref{ThSEHoutside0}, all singular logarithmic derivations are topologically nilpotent. In particular, all of them have zero trace so, by Theorem \ref{ThZeroTrace}, LCT cannot hold.
    
\end{proof}

\begin{Rmk}
    This gives an alternative proof of the conjecture for $n=2$, since singularities are isolated and a divisor is always strongly Euler-homogeneous at smooth points. \\
\end{Rmk}

\subsection{Assuming weak Koszul-freeness}
\label{SubsectionWK}

The characterization given in Theorem \ref{ThCaractSH} allows us to prove that the conjecture holds for weakly Koszul-free divisors:

\begin{Th}
\label{ThConjWKoszul}
    Let $D \subset X$ be a weakly Koszul-free divisor. If $D$ satisfies LCT, then it is strongly Euler-homogeneous.
\end{Th}

\begin{proof}
    Since these are local properties, we may assume $D$ is a germ of free divisor in $(\C^n, 0)$. The statement is trivially true for $n = 1$ (we also know that it is true for $n=2$, but the following argument gives a new proof of this fact). Suppose it is true for dimension $n$ and let us prove it for dimension $n+1$. 
    
    If $D$ is a product at a point, then $D$ is locally isomorphic to $D' \times \C$, where $D'$ is a germ of weakly Koszul-free divisor in $\C^n$ that satisfies LCT by Proposition \ref{PropProductSH} and Lemma \ref{LemProd}. By induction hypothesis, $D'$ is strongly Euler-homogeneous and so is $D$. This proves that $D$ is strongly Euler-homogeneous outside $D_0$ (where $D$ is a product). As $D$ is weakly Koszul-free, by Theorem \ref{ThCaractSH}, $\dim \tilde{D}_0 \leq 0$, so $\tilde{D}_0 \subset \{0\}$. By Proposition \ref{PropRank}, $D$ is always strongly Euler-homogeneous on $D_0 \setminus \tilde{D}_0$, so we have strong Euler-homogeneity on a punctured neighbourhood of $0$. Since $D$ satisfies LCT, by Theorem \ref{ThConjOutsideDiscreteSet}, $D$ must also be strongly Euler-homogeneous at $0$.
    
\end{proof}

It is well-known that, even for free divisors, LCT does not imply Koszul-freeness. The four lines is a classical counterexample. However, this divisor is still weakly Koszul-free. From Theorem \ref{ThCaractSH} we can easily deduce an interesting consequence: this is a general fact for free divisors in ambient dimension $n=3$.

\begin{Cor}
\label{CorWKoszulLCT}
Let $D \subset X$ be a free divisor in a complex analytic manifold of dimension $3$. If $D$ satisfies LCT, then it is weakly Koszul-free.
\end{Cor}

\begin{proof}
    These properties being local, we may assume $D$ is a germ of free divisor in $(\C^3,0)$. By Theorem \ref{ThCaractSH}, $D$ is weakly Koszul-free if and only if $\dim \tilde{D}_0 \leq 0$. As $D$ satisfies LCT, it is strongly Euler-homogeneous (because we know Conjecture \ref{ConjLCT-SEH} is true for $n=3$). In particular, $\tilde{D}_0 = \varnothing$ and we are done.
    
\end{proof}

\subsection{Assuming ambient dimension $n=4$}
\label{SubsectionDim4}

M. Granger and M. Schulze proved in \cite[Theorem 5.4]{GS} the so-called \emph{formal structure theorem for logarithmic vector fields}. Given a germ $f \in \what{\hol}$, they call $s$ the maximal dimension of the vector space of diagonal derivations $\sigma \in \Der_f$ such that $\sigma(f) \in \C \cdot f$ for $f$ varying in a formal contact equivalence class (that is, $s$ is maximal for all formal coordinate systems and changes of $f$ by unit factors). Then, they prove that there exists a minimal generating set of $\Der_f$ in which $s$ derivations are diagonal in some formal coordinate system and the rest of them are (topologically) nilpotent satisfying some additional properties. 

They use the notion of semisimplicity given by K. Saito (for them, a derivation is semisimple if it only has linear part and the associated matrix is semisimple), which depends on the choice of coordinates. In this subsection, we first develop a version of this theorem in which we use the concepts of semisimplicity and topological nilpotency from Definition \ref{DefSN}, following the Gérard-Levelt approach. This allows us to define $s$ intrinsically as the maximal number of commuting semisimple derivations in a minimal generating set of $\Der_f$, in such a way that it is completely independent of the choice of equation or coordinate system. Then, we use this version of the formal structure theorem to show that Conjecture \ref{ConjLCT-SEH} holds unconditionally in the $4$-dimensional case. 

First, we need to generalize the statement $(4)$ in Proposition \ref{PropSNdVR24} to the case of multiple diagonal derivations:

\begin{Prop}
\label{PropDiagDer}
Let $0 \neq f \in \hol$, let $\delta_1, \dots, \delta_m \in \mm \what{\Der}$ be logarithmic derivations for $f$ that are diagonal in some fixed formal coordinate system and let $\alpha_1, \ldots, \alpha_m \in \what{\hol}$ be such that $\delta_i(f) = \alpha_i f$ for all $i=1, \ldots, m$. Then, there exists a formal unit $u$ such that $g=uf$ verifies $\delta_i(g) = \alpha_{i0} g$ for all $i=1, \ldots, m$, where $\alpha_{i0}$ is the constant term of $\alpha_i$.
\end{Prop}

\begin{proof}

Let us write $\delta_i = \underline{x} D_i \overline{\partial}$, where $D_i = \diag(\lambda_i)$ and $\lambda_i$ is a vector of (possibly complex) weights. Also write $\alpha_i = \sum_{\beta \in \N^n} \alpha_{i\beta}{x}^{\beta}$.

Now, set

$$ u = \exp \left( - \sum_{{\beta \cdot \lambda_1} \neq 0} \dfrac{\alpha_{1\beta} x^\beta}{\beta \cdot \lambda_1} - \ldots -  \sum_{\substack{\beta \cdot \lambda_1 = 0 \\ \cdots \\ \beta \cdot \lambda_m \neq 0}} \dfrac{\alpha_{m\beta} x^\beta}{\beta \cdot \lambda_m}\right).  $$

\vspace{0,3 cm}

Note that $u$ is a well-defined formal power series because the argument of the exponential, call it $b$, vanishes at the origin ($\beta = 0$ is not in the sum), so we can compose the two series and the result is another formal power series that does not vanish at the origin. Thus, $u$ is a formal unit and

$$\delta_i(u) = \delta_i(e^b) = e^b \delta_i(b) = u \delta_i(b).$$

\vspace{0,2 cm}

In order to compute $\delta_i(b)$, take into account that $\delta_i$ applied to a monomial $x^\beta$ returns $(\beta \cdot \lambda_i) x^\beta$. Moreover, diagonal derivations always commute, so $0=[\delta_i, \delta_j](f) = (\delta_i(\alpha_j) - \delta_j(\alpha_i)) f$ and we deduce that $\delta_i(\alpha_j) = \delta_j(\alpha_i)$. Therefore:

$$ \delta_i(\alpha_j) = \sum_{\beta} (\beta \cdot \lambda_i) \alpha_{j\beta} x^\beta = \sum_{\beta} (\beta \cdot \lambda_j) \alpha_{i\beta} x^\beta = \delta_j(\alpha_i), $$

\noindent that is, $(\beta \cdot \lambda_i) \alpha_{j\beta} = (\beta \cdot \lambda_j) \alpha_{i\beta}$ for all $i,j =1, \ldots, m$ and $\beta \in \N^n$. Thus,

\begin{align*}
    \delta_i(b) &= -\sum_{{\beta \cdot \lambda_1} \neq 0} \dfrac{\alpha_{1\beta} (\beta \cdot \lambda_i) }{\beta \cdot \lambda_1} x^\beta - \ldots -  \sum_{\substack{\beta \cdot \lambda_1 = 0 \\ \cdots \\ \beta \cdot \lambda_i \neq 0}} \dfrac{\alpha_{i\beta} (\beta \cdot \lambda_i)}{\beta \cdot \lambda_i} x^\beta \\ &= -\sum_{{\beta \cdot \lambda_1} \neq 0} \alpha_{i\beta} x^\beta - \ldots -  \sum_{\substack{\beta \cdot \lambda_1 = 0 \\ \cdots \\ \beta \cdot \lambda_i \neq 0}} \alpha_{i\beta} x^\beta.
\end{align*}

Let $g=uf$, so that 

\begin{align*}
    \delta_i(g) &= \delta_i(u) f + u \delta_i(f) = u \delta_i(b) f + u \alpha_i f = (\alpha_i+\delta_i(b)) uf = (\alpha_i+\delta_i(b)) g \\ &= \left( \sum_{\beta} \alpha_{i\beta} x^\beta - \sum_{{\beta \cdot \lambda_1} \neq 0} \alpha_{i\beta} x^\beta - \ldots -  \sum_{\substack{\beta \cdot \lambda_1 = 0 \\ \cdots \\ \beta \cdot \lambda_i \neq 0}} \alpha_{i\beta} x^\beta \right) g = \left( \sum_{\substack{\beta \cdot \lambda_1 = 0 \\ \cdots \\ \beta \cdot \lambda_i = 0}} \alpha_{i\beta} x^\beta \right) g.
\end{align*}

\vspace{0,3 cm}

Let us see that $\alpha'_i := \displaystyle\sum_{\substack{\beta \cdot \lambda_1 = 0 \\ \cdots \\ \beta \cdot \lambda_i = 0}} \alpha_{i\beta} x^\beta$ must be $\alpha_{i0}$ and we will have the result:

\vspace{0,3 cm}

If we decompose $g$ as a sum of eigenvectors of $\delta_i$ (which can be done in a unique way by \cite[Lemma 2.3]{Saito71}), $g = \sum_{\mu \in \C} g_{\mu}$ with $\delta_i(g_\mu) = \mu g_\mu$, we have

\vspace{0,2 cm}

$$ \delta_i(g) = \sum_{\mu \in \C} \mu g_\mu = \alpha'_i g = \sum_{\mu \in \C} \alpha'_i g_\mu. $$

\vspace{0,3 cm}

But note that $\delta_i(\alpha'_i) = 0$ because each summand $x^\beta$ of $\alpha'_i$ satisfies $\beta \cdot \lambda_i = 0$. Then, $\delta_i(\alpha'_i g_\mu) = \alpha'_i \delta_i(g_\mu) = \mu \alpha'_i g_\mu$ and so $\mu g_\mu$ and $\alpha'_i g_\mu$ are both eigenvectors of $\delta_i$ for $\mu$. By the uniqueness of the decomposition, we must have $\mu g_\mu = \alpha'_i g_\mu$ for every $\mu \in \C$. As $g \neq 0$, some $g_\mu \neq 0$. But then $\alpha'_i = \mu$ is constant and the only possibility for this is $\alpha'_i = \alpha_{i0}$, as that is the value of $\alpha'_i$ at the origin.

\end{proof}

\begin{Th}
\label{ThFST}
    Let $f \in \what{\hol}$ be such that $\Der_f \subset \mm \what{\Der}$ and let $s \geq 0$ be the maximal number of commuting semisimple derivations in a minimal generating set of $\Der_f$. If $s=0$, then each minimal generating set of $\Der_f$ is formed by topologically nilpotent derivations. Otherwise, there exist a minimal generating set $\{\sigma_1, \ldots, \sigma_s, \nu_1, \ldots, \nu_r\}$ of $\Der_f$, a formal coordinate system $(y_1, \ldots, y_n)$ and a formal unit $u$ such that

    \begin{itemize}
        \item[(1)] $\sigma_1, \ldots, \sigma_s$ are diagonal with rational weights: $\sigma_i = \sum_{j=1}^n w_{ij} y_j \dfrac{\partial}{\partial y_j}$ with $w_{ij} \in \Q$ for all $i=1, \ldots, s$ and $j=1, \ldots, n$.
        \item[(2)] For $g=uf$ we have $\sigma_i(g) = \alpha_i g$ with $\alpha_i \in \Q$ for all $i=1, \ldots, s$.
        \item[(3)] $\nu_1, \ldots, \nu_r$ are topologically nilpotent.
        \item[(4)] $[\sigma_i, \nu_j] = c_{ij} \nu_j$ with $c_{ij} \in \Q$ (that is, $\nu_j$ is $\sigma_i$-homogeneous of weight $c_{ij}$) for all $i=1, \ldots, s$, $j=1, \ldots, r$.
    \end{itemize}
\end{Th}

\begin{proof}
     Suppose $s=0$ and let $\{\delta_1, \ldots, \delta_k\}$ be a minimal generating set of $\Der_f$. Their semisimple and topologically nilpotent parts are still logarithmic (Proposition \ref{PropSNdVR24}), so $\{\delta_{1S}, \delta_{1N}, \ldots, \delta_{kS},\delta_{kN}\}$ is a generating set. We claim that $\delta_{iS}=0$ for all $i$: the class of each $\delta_{iS}$ modulo $\mm \Der_f$ must be zero (otherwise, it could be extended to a basis of $\Der_f/\mm\Der_f$, giving, by Nakayama's Lemma, a minimal generating set of $\Der_f$ with a semisimple derivation and $s \geq 1$). Thus, $\delta_{iS} \subset \mm\Der_f \subset \mm^2 \what{\Der}$. But any derivation in $\mm^2 \what{\Der}$ is topologically nilpotent (as it has no linear part) so $\delta_{iS}$ is both semisimple and topologically nilpotent. By the uniqueness of the decomposition, we must have $\delta_{iS} = 0$. This means $\delta_i = \delta_{iN}$ is topologically nilpotent for all $i$ and we are done.

    Now suppose $s \geq 1$ and consider a minimal generating set $\{\sigma_1, \ldots, \sigma_s, \eta_1, \ldots, \eta_r\}$ of $\Der_f$, where $\sigma_1, \ldots, \sigma_s$ are commuting semisimple derivations. We claim that there exists a regular system of parameters $(y_1, \ldots, y_n)$ such that $\sigma_i = \sum_{j=1}^n w_{ij} y_j \partial_j$ with $w_{ij} \in \C$ for all $i=1, \ldots, s$, $j=1, \ldots, n$. We prove it by mimicking the proof of \cite[Théorème 2.3]{GL}:

    First, we need to show that each closed (for the $\what{\mm}$-adic topology) subspace $H \subset k[[x_1, \ldots, x_n]]$ that is stable by $\sigma_1, \ldots, \sigma_s$ admits a closed supplementary subspace that is also stable by $\sigma_1, \ldots, \sigma_s$. This is easily done by adapting the proof of \cite[Proposition 1.3]{GL}, using the well-known fact that commuting semisimple endomorphisms of a finite dimensional $\C$-vector space are simultaneously diagonalizable. The previous assertion shows the existence of a $\C$-vector space $W$ stable by $\sigma_1, \ldots, \sigma_s$ such that $\what{\mm} = \what{\mm}^2 \oplus W$. As $\dim W = n$, we can take a basis $\{y_1, \ldots, y_n\}$ of $W$ formed by common eigenvectors of $\sigma_1, \ldots, \sigma_s$. If we denote by $w_{ij}$ the eigenvalue of $y_j$ for $\sigma_i$, then $(y_1, \ldots, y_n)$ is our desired system of parameters and the proof of the claim is finished.

    In this formal coordinate system the $\sigma_i$ are diagonal, so we can apply Proposition \ref{PropDiagDer} to deduce that there exists a formal unit $u$ such that $g=uf$ verifies $\sigma_i(g) = \alpha_i g$ with $\alpha_i \in \C$ for all $i=1, \ldots, s$. Also, we may suppose that $w_{ij}$ and $\alpha_i$ are rational for all $i,j$ by \cite[Lemma 1.4]{Saito71}.
    
    Write $\underline{\sigma} = (\sigma_1, \ldots, \sigma_s)$. Now, we decompose $\eta_i = \sum_{\underline{\lambda} \in \Q^s} \eta_{i \underline{\lambda}}$, where $\eta_{i \underline{\lambda}}$ is $\underline{\sigma}$-homogeneous of degree $\underline{\lambda}$ for all $i=1, \ldots, r$. Let $\beta \in \what{\hol}$ be such that $\eta_i(g) = \beta g$ (recall that $\eta_i$ is also logarithmic for $g$) and write in a similar way $\beta = \sum_{\underline{\lambda} \in \Q^s} \beta_{\underline{\lambda}}$. We get 

     $$ \sum_{\underline{\lambda} \in \Q^s} \eta_{i \underline{\lambda}} (g) = \sum_{\underline{\lambda} \in \Q^s} \beta_{\underline{\lambda}} g, $$

     \vspace{0,2 cm}

    \noindent where $\eta_{i \underline{\lambda}} (g)$ and $\beta_{\underline{\lambda}} g$ are both $\underline{\sigma}$-homogeneous of the same degree ($\underline{\lambda} + \underline{\alpha}$, where we are writing $\underline{\alpha} = (\alpha_1, \ldots, \alpha_s)$). By the uniqueness of this decomposition, we get $\eta_{i \underline{\lambda}} (g) = \beta_{\underline{\lambda}} g$, so $\eta_{i \underline{\lambda}}$ is logarithmic for all $i=1, \ldots, r$ and $\underline{\lambda} \in \Q^s$. 

    Note that each derivation $\sigma_i \in \mm\what{\Der}$ induces a linear map $\overline{\sigma_i}$ in $\Der_f/\mm\Der_f$ by $\overline{\sigma_i}(\bar{\eta}) = \overline{[\sigma_i, \eta]}$ (it is well-defined since $\eta \in \mm\Der_f$ implies $[\sigma_i, \eta] \in \mm\Der_f$). We claim that, for each $\delta \in \Der_f$, the number of $\delta_{\underline{\lambda}}$ whose class in $\Der_f/\mm\Der_f$ is not zero is finite: suppose the class of $\delta_{\underline{\lambda}}$ in $\Der_f/\mm\Der_f$ is not zero. Then, it is a common eigenvector of the induced maps $\overline{\sigma_i}$ with $\underline{\lambda}$ as vector of eigenvalues. But since $\Der_f/\mm\Der_f$ is of finite dimension $s+r$, the number of different $s$-tuples of eigenvalues is also finite. 
    
    In particular, the class of each $\eta_i$ is a (finite) linear combination of the classes of the $\eta_{i\underline{\lambda}}$, so these classes together with those of the $\sigma_i$ (a finite number) generate $\Der_f/\mm\Der_f$. By Nakayama's Lemma, the corresponding derivations generate $\Der_f$. Thus, we get that a finite subset $S$ of $\{\sigma_1, \ldots, \sigma_s, \eta_{1 \underline{\lambda}}, \ldots, \eta_{r \underline{\lambda}}, \mid \underline{\lambda} \in \Q^s\}$ containing $\sigma_1, \ldots, \sigma_s$ is a generating set of $\Der_f$.

    Now, for each $\eta\in S\setminus\{\sigma_1, \ldots, \sigma_s\}$ we do the following: if its $\underline{\sigma}$-degree is $\underline{\lambda} \neq \underline{0}$, then it is topologically nilpotent \cite[Lemma 2.6]{GS} and we do nothing. If $\underline{\lambda} = \underline{0}$, then we replace $\eta$ by $\eta_S, \eta_N$ (which still commute with the $\sigma_i$ by \cite[Théorème 1.6]{GL}) and we still have a generating set of $\Der_f$. Denote again by $S$ the generating set obtained from this process (which is formed exclusively by $\underline{\sigma}$-homogeneous topologically nilpotent and commuting semisimple derivations) and extract a minimal generating set $S'$ from $S$ containing $\sigma_1, \ldots, \sigma_s$. Since $s$ is maximal, the only semisimple derivations in $S'$ must be $\sigma_1, \ldots, \sigma_s$. Thus, the rest of them, $\nu_1, \ldots, \nu_r$, must be topologically nilpotent and we get the desired result.
    
\end{proof}

\newpage

\begin{Rmk}
$ $
\begin{itemize}
    \item[(a)] If $f \in \hol$, Theorem \ref{ThFST} is valid for $\what{\Der}_f$.
    \item[(b)] As previously mentioned, this version of the formal structure theorem gives an intrinsic definition of the number $s$. Let us notice that M. Schulze gave in \cite[Theorem 2]{SchulzeFSTMaximalTorus} another characterization of this number as the dimension of a maximal torus in the Lie group of automorphisms $\varphi: \what{\hol} \to \what{\hol}$ such that $\varphi(f) \in (f)$.
\end{itemize}
\end{Rmk}

We are now ready to prove Conjecture \ref{ConjLCT-SEH} for $n=4$, following a similar strategy to that used in the $n=3$ case in \cite{GS}:

\begin{Th}
\label{ThConjDim4}
    Let $X$ be a $4$-dimensional analytic manifold and let $D \subset X$ be a free divisor in it. If $D$ satisfies LCT, then $D$ must be strongly Euler-homogeneous.
\end{Th}

\begin{proof}
    As these properties are local, we may assume $D=V(f)$ is a germ of free divisor in $(\C^4, 0)$ and it is enough to prove that if it satisfies LCT, then it is strongly Euler-homogeneous at $0$. 
    
    Note that, since $D$ is a product outside $D_0$, by Lemma \ref{LemProd}, it must be strongly Euler-homogeneous at those points (as we know LCT implies strong Euler-homogeneity for free divisors in dimension $3$). Let us suppose that it is not strongly Euler-homogeneous at $0$. In particular, it cannot be a product at $0$. 
    
    As LCT holds for $D$, by Theorem \ref{ThZeroTrace}, there must be at least one logarithmic derivation with non-zero trace (in particular, it cannot be topologically nilpotent). Thus, in the notation of Theorem \ref{ThFST}, $s \geq 1$ and we can find a formal coordinate system $x_1, \ldots, x_4$ and a basis of $\what{\Der}_f$, $\{\sigma, \delta_2, \delta_3, \delta_4\}$, such that $\sigma = \sum_{i=1}^4 \lambda_i x_i \partial_i$ with $\lambda_i \in \Q$, $\tr(\sigma) = \sum_{i=1}^4 \lambda_i \neq 0$ and $\delta_i$ is $\sigma$-homogeneous of degree $c_i \in \Q$ for $i=2,3,4$ (if $s>1$ so that some of the $\delta_i$ are diagonal, then they trivially commute with $\sigma$ and $c_i = 0$). Moreover, there exists a formal unit $u$ such that $g=uf$ verifies $\sigma(g) = \alpha_1 g$, with $\alpha_1 \in \C$. Let $\alpha_i \in \what{\hol}$ be such that $\delta_i(g) = \alpha_i g$ for $i=2,3,4$. As we are assuming $D$ is not strongly Euler-homogeneous at $0$, by Proposition \ref{PropForm}, $\alpha_i \in \what{\mm}$ for all $i = 1,\ldots,4$. In particular, $\alpha_1=0$.

    Now we distinguish cases according to the number of $\lambda_i$ that are zero:

    If every $\lambda_i \neq 0$, then each $x_i$ belongs to $J_1$ (recall that this is the ideal of $\what{\hol}$ generated by the entries of the formal Saito matrix). Moreover, this ideal is proper because $D$ is not a product at $0$ (and, thus, $\what{\Der}_g \subset \mm\what{\Der}$ by Proposition \ref{PropForm}), so $J_1 = \what{\mm}$, $I_1 = J_1^c = \mm$ and $D_0 = \{0\}$. Therefore, $D$ is strongly Euler-homogeneous outside $0$ but not at $0$, which contradicts Theorem \ref{ThConjOutsideDiscreteSet}.

    If three of the $\lambda_i$ are zero, say $\lambda_1=\lambda_2=\lambda_3=0$, then $\lambda_4 \neq 0$ and $\partial_4(g)=0$, so $D$ is (formally and then convergently by Proposition \ref{PropForm}) a product at $0$, reaching a contradiction.

    If exactly two of the $\lambda_i$ are zero, say $\lambda_1=\lambda_2=0$, $\lambda_3, \lambda_4 \neq 0$, then, as $\sigma(g)=0$, $g$ is of the form $\sum_{\lambda_3 j + \lambda_4 k = 0} a_{jk}(x_1, x_2) x_3^j x_4^k$. Equivalently, $g = \sum_{\mu \in \N} a_{\mu}(x_1, x_2) x_3^{\mu p} x_4^{\mu q}$ where $p, q \geq 1$ are coprime and $\lambda_3 p + \lambda_4 q = 0$ (if $p$ or $q$ vanishes, $g$ would be a product because it would not depend on all variables). By formal Saito's criterion (cf. \cite[Corollary 3.4]{dVR24}), $g$ is a unit times the determinant of the Saito matrix, which belongs to $(x_3, x_4)$ as $\lambda_1=\lambda_2=0$. This implies that $a_0(x_1, x_2) = 0$ and we can extract $x_3^p x_4^q$ as a common factor. But $g=uf$ is reduced (because so is $f$ in $\what{\hol}$ by Proposition \ref{PropForm}), so we must have $p=q=1$ and $\lambda_3 + \lambda_4 = 0$. Thus, $\tr(\sigma)=0$ and we get a contradiction.

    The only case left (and the hardest to discard) is that only one of the $\lambda_i$ is zero, say $\lambda_1=0$. Now, we have:

    \vspace{-0,2 cm}
    
    $$(x_2, x_3, x_4) \subset J_1 \subset \tilde{J}_1 \subset \sqrt{\tilde{J}_1} = \sqrt{(\alpha_2, \alpha_3, \alpha_4)}, $$

    \vspace{0,2 cm}

    \noindent where the last equality is due to Corollary \ref{CorSEHoutsideD0Formal}. As $\htt \sqrt{(\alpha_2, \alpha_3, \alpha_4)} = \htt (\alpha_2, \alpha_3, \alpha_4) \leq 3$, the inclusions cannot be strict. So we deduce $(x_2, x_3, x_4) = J_1 = \sqrt{(\alpha_2, \alpha_3, \alpha_4)}$.

    Since $\sigma(g) = 0$, $\sum_{i=1}^4 \lambda_i \beta_i = 0$ for any monomial $x^\beta$ appearing in $g$, so not all $\lambda_i$ can have the same sign (as $g$ is not a product, all variables must appear in some monomial of $g$). Let us assume without loss of generality that $\lambda_2 > 0$ and $\lambda_3, \lambda_4 < 0$.

    Let us also note that $\alpha_i$ is $\sigma$-homogeneous of degree $c_i$:

    \vspace{-0,2 cm}
    
    $$ c_i \alpha_i g = c_i \delta_i(g) = [\sigma, \delta_i](g) = \sigma(\alpha_i g) = \sigma(\alpha_i) g, $$

    \noindent so $\sigma(\alpha_i) = c_i \alpha_i$ for $i=2,3,4$.

    As there exists some $\beta' \geq 1$ such that $x_2^{\beta'} \in (\alpha_2, \alpha_3, \alpha_4)$, some $\alpha_i$, say $\alpha_2$, must have a monomial of the form $x_2^\beta$ for some $\beta \geq 1$ (otherwise, $(\alpha_2, \alpha_3, \alpha_4)$ would be contained in $(x_1, x_3, x_4)$ and clearly $x_2^{\beta'} \not \in (x_1, x_3, x_4)$). Then, $c_2 = \beta \lambda_2 > 0$.

    Let $A$ be the Saito matrix with respect to $\{\sigma, \delta_2, \delta_3, \delta_4\}$ and let $a_{ij} = \delta_i(x_j)$ (the entries of $A$ corresponding to the $\delta_i$). As $\delta_i$ is $\sigma$-homogeneous of degree $c_i$ and $x_j$ is $\sigma$-homogeneous of degree $\lambda_j$, it follows that $a_{ij}$ is $\sigma$-homogeneous of degree $c_i + \lambda_j$.

    Now, we claim: 

    \begin{itemize}
        \item Every $\sigma$-homogeneous formal power series $h$ of strictly positive degree (with respect to $\sigma$) belongs to $(x_2)$.

        \textit{Proof:} As $x_1$ has degree $\lambda_1 = 0$ and $x_3, x_4$ have negative degree ($\lambda_3$ and $\lambda_4$, respectively), a monomial in which $x_2$ does not appear must necessarily have a non-positive degree, so cannot be part of $h$.

        \item Every $\sigma$-homogeneous formal power series $h$ of degree zero belongs to $(x_2)+\C[[x_1]]$.

        \textit{Proof:} If $x_2$ does not appear in a monomial of degree zero, then neither $x_3$ nor $x_4$, having a strictly negative degree, can appear, so the monomial must be a power of $x_1$.
    \end{itemize}

    \vspace{0,3 cm}

    Since $g = v \cdot \det(A)$ for some formal unit $v$, it follows that $g \in (x_2, x_3, x_4)$. But then $g$ must be a multiple of $x_2$, since its degree is zero and a power of $x_1$ cannot be a monomial of $g$. This implies $\Der_g \subset \Der_{x_2} = \langle \partial_1, x_2 \partial_2, \partial_3, \partial_4 \rangle$ \cite[Lemma 3.4]{GS} and so $a_{i2}$ is a multiple of $x_2$ for all $i$. Thus, we can consider $\delta'_i = \delta_i - \frac{a_{i2}}{\lambda_2 x_2} \sigma$ that is still $\sigma$-homogeneous of degree $c_i$ (as $a_{i2}/x_2$ is $\sigma$-homogeneous of degree $c_i+\lambda_2-\lambda_2 = c_i$) and verifies $\delta'_i(g) = \alpha_i g$. It is clear that $\{\sigma, \delta'_2, \delta'_3, \delta'_4\}$ is a basis of $\Der_g$ whose Saito matrix has the same determinant that $A$. Note that $\delta_i'(x_2) = \delta_i(x_2)-\frac{a_{i2}}{\lambda_2 x_2} \sigma(x_2) = a_{i2}-a_{i2}=0$. For the sake of simplicity, this new basis will be denoted again by $\{\sigma, \delta_2, \delta_3, \delta_4\}$ and its Saito matrix by $A$. With this new notation, $a_{i2} = 0$ for $i=2,3,4$. Thus, expanding the determinant by the second column we get $g=v \cdot \det(A) = v \cdot \lambda_2 x_2 h$, where $h=\left| \begin{array}{ccc}
       a_{21} & a_{23} & a_{24} \\
       a_{31} & a_{33} & a_{34} \\
       a_{41} & a_{43} & a_{44} \\
    \end{array}\right|$.
    
    Note that $\det(A)$ is $\sigma$-homogeneous of degree $\sum_{i=2}^4 \lambda_i + \sum_{i=2}^4 c_i$ because so are all of its summands, and so the unit $v$ must also be $\sigma$-homogeneous. But a unit, having a non-zero constant term, can only be $\sigma$-homogeneous of degree $0$. Thus, we deduce that $\det(A)$ is also $\sigma$-homogeneous of degree $0$ and $\sum_{i=2}^4 \lambda_i + \sum_{i=2}^4 c_i = 0$.
    
    Since $g$ is reduced, $h$ cannot be a multiple of $x_2$. By the previous discussion, $x_2h$ is $\sigma$-homogeneous of degree $0$ and, thus, $h$ must be $\sigma$-homogeneous of degree $-\lambda_2$. Therefore, it has at least a monomial of the form $x_1^i x_3^j x_4^k$ with $j \lambda_3 + k \lambda_4 = -\lambda_2$ (or, equivalently, $\lambda_2 = j |\lambda_3| + k |\lambda_4|$).

    As $a_{21}$ and $\alpha_2$ are $\sigma$-homogeneous of degree $c_2 > 0$, by the first claim, they must be a multiple of $x_2$. Let us write $a_{21} = x_2 a'_{21}$ and $\alpha_2=x_2 \alpha'_2$. 

    Now consider $a_{23}$, which is $\sigma$-homogeneous of degree $c_2 + \lambda_3$ and belongs to $J_1 = (x_2, x_3, x_4)$. If $c_2 + \lambda_3 \geq 0$, then $a_{23}$ must also be a multiple of $x_2$ by the second claim. The same applies to $a_{24}$ if $c_2 + \lambda_4 \geq 0$. But $a_{23}$ and $a_{24}$ cannot be a multiple of $x_2$ at the same time, as this would imply that $h$ is also a multiple of $x_2$, so $c_2 + \lambda_3 < 0$ or $c_2 + \lambda_4 < 0$:

    \begin{itemize}
        \item If $c_2 + \lambda_3 = \beta \lambda_2 + \lambda_3 < 0$, then $|\lambda_3| > \beta \lambda_2$ and:

        $$ \lambda_2 = j|\lambda_3| + k |\lambda_4| > j\beta\lambda_2+k|\lambda_4| \geq j \lambda_2,$$

        so $j=0$ and $\lambda_2 = k |\lambda_4|$ (in particular, $k \neq 0$).

        \item If $c_2 + \lambda_4 = \beta \lambda_2 + \lambda_4 < 0$, then $|\lambda_4| > \beta \lambda_2$ and:

        $$ \lambda_2 = j|\lambda_3| + k |\lambda_4| > j |\lambda_3| + k\beta\lambda_2 \geq k \lambda_2,$$

        so $k=0$ and $\lambda_2 = j |\lambda_3|$ (in particular, $j \neq 0$).
        
    \end{itemize}

    Therefore, one of the degrees is negative and the other one is non-negative. Without loss of generality, we may assume that $c_2 + \lambda_3 < 0$ and $c_2 + \lambda_4 \geq 0$ (thus, $a_{24}$ is a multiple of $x_2$ by the previous discussion). Moreover, any monomial of $h$ in which $x_2$ does not appear is of the form $x_1^i x_4^k$ (recall that $j=0$) with $k$ fixed ($k=\lambda_2/|\lambda_4|$).

    We proceed in the same way with $x_3$: belonging to $\sqrt{(\alpha_2, \alpha_3, \alpha_4)}$, as before, there exists $\gamma \geq 1$ such that $x_3^\gamma$ is a monomial of some $\alpha_i$. It cannot be a monomial of $\alpha_2 = x_2 \alpha'_2$, so we may assume that it is a monomial of $\alpha_3$. Thus, $c_3 = \gamma \lambda_3$. 

    As $\sum_{i=2}^4 \lambda_i + \sum_{i=2}^4 c_i = 0$, we deduce:

    \vspace{-0,3 cm}

    \begin{align*}
        c_4 =& -\lambda_2-\lambda_3-\lambda_4-c_2-c_3 \\
            =& -(1+\beta)\lambda_2+(1+\gamma)|\lambda_3| + |\lambda_4| \\
            >& \hspace{0,1 cm} [-(1+\beta)+(1+\gamma)\beta]\lambda_2 + |\lambda_4| \\
            =& \hspace{0,1 cm} (\gamma \beta -1) \lambda_2 + |\lambda_4| \geq |\lambda_4| > 0,
    \end{align*}

    \noindent where we have used that $c_2 = \beta \lambda_2$, $c_3 = \gamma \lambda_3$, $\lambda_3, \lambda_4 < 0$ in the second equality, $|\lambda_3| > \beta \lambda_2$ in the following inequality and $\beta, \gamma \geq 1$ in the next one.

    But $c_4$ is the degree of $a_{41}$ and $\alpha_4$, so, again by the first claim, they are both a multiple of $x_2$: $a_{41} = x_2 a'_{41}$ and $\alpha_4 = x_2 \alpha'_4$. 

    Now recall that $g = v \cdot \det A = v \cdot \lambda_2 x_2 h$, so $\partial_2(g) = v \lambda_2 h + \lambda_2 x_2 \partial_2(v \cdot h)$. And if we apply Cramer's rule to the system $A \cdot (\partial_1(g), \partial_2(g), \partial_3(g),\partial_4(g))^t = (0, \alpha_2g, \alpha_3g,\alpha_4g)^t$ we get

    $$ \partial_2(g) = v \left| \begin{array}{cccc}
       0 & 0 & \lambda_3 x_3 & \lambda_4 x_4 \\
       a_{21} & \alpha_2 & a_{23} & a_{24} \\
       a_{31} & \alpha_3 & a_{33} & a_{34} \\
       a_{41} & \alpha_4 & a_{43} & a_{44} \\
    \end{array}\right| = v \left| \begin{array}{cccc}
       0 & 0 & \lambda_3 x_3 & \lambda_4 x_4 \\
       x_2 a'_{21} & x_2 \alpha'_2 & a_{23} &x_2 a'_{24} \\
       a_{31} & \alpha_3 & a_{33} & a_{34} \\
       x_2 a'_{41} & x_2 \alpha'_4 & a_{43} & a_{44} \\
    \end{array}\right|.$$
    
    Finally, evaluating the two expressions of $\partial_2(g)$ at $x_2=0$:

    \begin{align*}
        v|_{x_2=0} \cdot \lambda_2 h|_{x_2=0} =\partial_2(g)|_{x_2=0} =& v|_{x_2=0} \cdot \left| \begin{array}{cccc}
       0 & 0 & \lambda_3 x_3 & \lambda_4 x_4 \\
       x_2 a'_{21} & x_2 \alpha'_2 & a_{23} &x_2 a'_{24} \\
       a_{31} & \alpha_3 & a_{33} & a_{34} \\
       x_2 a'_{41} & x_2 \alpha'_4 & a_{43} & a_{44} \\
    \end{array}\right|_{x_2=0} = 0.
    \end{align*}

    As $v$ is a unit and $\lambda_2 \neq 0$, this implies $h|_{x_2=0}=0$. But then $h$ is a multiple of $x_2$, which is a contradiction with the fact that $g$ is reduced. 

    We conclude that $D$ must be strongly Euler-homogeneous at $0$.
    
\end{proof}

\begin{Rmk}
   In the proof of Theorem \ref{ThConjDim4}, we use a formal Saito's criterion for a convergent power series, which is an easy consequence of the original one but enough for our purpose. However, there is a purely formal Saito's criterion stating that, for a reduced $g \in \what{\hol}$, $\Der_g$ is a free $\what{\hol}$-module of rank $n$ if and only if $g$ is, up to a unit, the determinant of the formal Saito matrix with respect to some elements $\delta_1, \ldots, \delta_n \in \Der_g$. 
   
   One implication of this result is given in \cite[Proposition 4.2]{GS}. The converse is unwritten but it is a consequence of the Hilbert-Burch theorem \cite[Theorem 20.15]{EisenbudComAlg} that was hinted by R. O. Buchweitz.
   
\end{Rmk}

\section{The case of linear free divisors}
\label{SectionLFD}

Recall that linear free divisors are those free divisors $D \subset \C^n$ that have a global basis of linear logarithmic derivations (i.e. derivations that only have linear part). By Saito's criterion, these divisors are always defined by homogeneous polynomials of degree $n$, so there is always a strong Euler vector field at the origin: $\chi = \sum_{i=1}^n x_i \partial_i$. In \cite[Theorem 7.10]{GMNS}, Granger-Mond-Nieto-Schulze prove:

\begin{Th}
Every linear free divisor in dimension $n \leq 4$ is locally
quasihomogeneous and hence LCT and GLCT (Global Logarithmic Comparison Theorem) hold.

\end{Th}

They conjecture that this is true in any dimension. However, the following is an example of a linear free divisor in dimension $5$ such that it is not even strongly Euler-homogeneous and that does not verify LCT:

\begin{Ex}
\label{ExLFDnotLCTnorSEH}
Let $D=V(f) \subset \C^5$ with $f = x(8x^3 u-y (8t x^2-4xyz+y^3))$. \textsc{Macaulay2} \cite{M2} tells us that $D$ is a linear free divisor and that a basis of the module of logarithmic derivations is formed by: 

\vspace{0,1 cm}

$$\begin{array}{l}

    \delta_1 = x \partial_x + y \partial_y + z \partial_z + t \partial_t + u \partial_u, \\

    \delta_2 = -4x \partial_x + y \partial_y + 6z \partial_z + 11t \partial_t + 16u \partial_u, \\

    \delta_3 = x \partial_y + y \partial_z + z \partial_t + t \partial_u, \\

    \delta_4 = 2x \partial_z + y \partial_t, \\
    
    \delta_5 = x \partial_t + y \partial_u.

\end{array}$$

Here, $\delta_1$ is the Euler vector field ($\delta_1(f) = 5f$) and $\delta_i(f) = 0$ for $i=2, \ldots, 5$. \\

The extended Saito matrix is then:

$$\tilde{A} = \left( \begin{array}{ccccc|c}
    x & y & z & t & u & -5 \\
    -4x & y & 6z & 11t & 16u & 0 \\
    0 & x & y & z & t & 0 \\
    0 & 0 & 2x & y & 0 & 0 \\
    0 & 0 & 0 & x & y & 0 \\
\end{array} \right). $$

\vspace{0,2 cm}

We have that:

\vspace{0,1 cm}

$$\begin{array}{l}

    \tilde{D}_0 = \varnothing, \\

    \tilde{D}_1 = D_0 = \{0\}, \\
    
    \tilde{D}_2 = D_1 = \{x=y=z=t=0\}, \\

    \tilde{D}_3 = \{x=y=0\}, \hspace{0,2 cm} D_2 = \{x=y=z=0\} \cup \{x=y=t^2-2uz=0\}. \\

\end{array}$$

\vspace{0,3 cm}

By Theorem \ref{ThGeomEH}, $D$ is strongly Euler-homogeneous if and only if $\tilde{D}_0 = \varnothing$ and $\tilde{D}_i=D_{i-1}$ for $i=1, 2, 3$, so we conclude that $D$ is not strongly Euler-homogeneous at any point of $\tilde{D}_3 \setminus D_2$. However, $\dim D_i = i$ for $i=0, 1, 2$, so $D$ is a Koszul-free divisor (Theorem \ref{ThCaractSH}). Since every Koszul-free divisor satisfying LCT must be strongly Euler-homogeneous, this divisor cannot satisfy LCT.

Moreover, its $b$-function is 

\vspace{-0,3 cm}

$$b(s) = (s+1)^3(2s+1)^2(3s+2)(3s+4)(4s+3)(4s+5)(6s+5)(6s+7).$$

In \cite[Theorem 1.4 and Conjecture 1.5]{SymmetryGS}, M. Granger and M. Schulze proved that all reductive linear free divisors have $b$-functions with symmetric roots about $-1$ and conjecture that this is true for any linear free divisor. However, this $b$-function is not symmetric about $-1$ due to the factor $(2s+1)$. We deduce that $D$ is not reductive and that this conjecture is also false for non-reductive linear free divisors.
\end{Ex}

Nevertheless, an immediate consequence of Theorem \ref{ThConjDim4} is that Conjecture \ref{ConjLCT-SEH} holds for linear free divisors in $n=5$:

\begin{Cor}
    Let $D$ be a linear free divisor in $\C^5$. If $D$ satisfies LCT, then $D$ is strongly Euler-homogeneous.
\end{Cor}

\begin{proof}
    Since we know the result is true for any free divisor in dimension $4$, we have that $D$ is strongly Euler-homogeneous outside $D_0$, where it is a product. But $D$ is linear, so $D_0$ is only the origin, where it is clearly strongly Euler-homogeneous. Thus, $D$ is strongly Euler-homogeneous. 
    
\end{proof}

\begin{Rmk}
    In fact, the same proof is valid to argue that if Conjecture \ref{ConjLCT-SEH} is true in dimension $n$, then it is true for linear free divisors in dimension $n+1$.
\end{Rmk}

\section*{Acknowledgements}
The author would like to thank his PhD advisors Luis Narváez Macarro and Alberto Castaño Domínguez for their support and guidance while writing this paper. He also thanks Dan Bath for his insightful comments and the referee for their careful reading and valuable suggestions, which significantly improved this article.

\addcontentsline{toc}{section}{Bibliography}

\bibliographystyle{plainurl}
\bibliography{biblio}
\thispagestyle{plain}

\end{document}